\newtheorem{question}{Question}[section]
\newtheorem{lemma}[question]{Lemma}
\newtheorem{theorem}[question]{Theorem}
\newtheorem{conjecture}[question]{Conjecture}
\newtheorem{corollary}[question]{Corollary}
\newcommand{\leqnomode}{\tagsleft@true}
\newcommand{\reqnomode}{\tagsleft@false}
\newcommand{\vsp}{\medskip}
\newcommand*\samethanks[1][\value{footnote}]{\footnotemark[#1]}
\def\dd{\hbox{-}}
\DeclareMathOperator{\tw}{tw}
\DeclareMathOperator{\width}{width}
\DeclareMathOperator{\cl}{cl}
\DeclareMathOperator{\Hub}{Hub}
\DeclareMathOperator{\hdim}{hdim}
\DeclareMathOperator{\Core}{Core}
\newcounter{tbox}
\newcommand{\sta}[1]{\vspace*{0.3cm}\refstepcounter{tbox}\noindent{ \parbox{\textwidth}{(\thetbox) \emph{#1}}}\vspace*{0.3cm}}
\newcommand{\otherlabel}[2]{\protected@edef\@currentlabel{#2}\label{#1}}
\mathchardef\mh="2D
\begin{document}

\begin{frontmatter}[classification=text]
%% EDITOR: this will force the keywords to appear right after the Abstract.
%%   If the abstract is too long and would force the keywords off the
%%   front page, please comment out % [classification=text] above
%%   This way the keywords will be floated on the bottom of the first page
%%   even though the Abstract spills over to the next page.

%%% AUTHOR: Title goes here.  This line is optional.  You must use it
%%   if title has footnote attached or requires nontrivial typesetting,
%%   e.g., inclusion of linebreaks to force nice layout.
%\title{Short Proof of R\"odl's $n^{\log\log n}$ Bound\titlefootnote{This is a footnote to the title}} %% please capitalize all significant words

%%% AUTHOR:
%%% List all authors. If you wish, place grant acknowledgements in \thanks.
%%% In brackets include a short tag for each author.
\author[ta]{Tara Abrishami\thanks{Supported by NSF Grant DMS-1763817 and NSF-EPSRC Grant DMS-2120644.}}
\author[mc]{Maria Chudnovsky\samethanks}
\author[sh]{Sepehr Hajebi}
\author[sophie]{Sophie Spirkl\thanks{We acknowledge the support of the Natural Sciences and Engineering Research Council of Canada (NSERC), [funding reference number RGPIN-2020-03912].
Cette recherche a \'et\'e financ\'ee par le Conseil de recherches en sciences naturelles et en g\'enie du Canada (CRSNG), [num\'ero de r\'ef\'erence RGPIN-2020-03912].}}

%%% AUTHOR: Abstract goes here
\begin{abstract}
A \textit{theta} is a graph consisting of two non-adjacent vertices and three internally disjoint paths between them, each of length at least two.  For a family $\mathcal{H}$ of graphs, we say a graph $G$ is $\mathcal{H}$-\textit{free} if no induced subgraph of $G$ is isomorphic to a member of $\mathcal{H}$. We prove a conjecture of Sintiari and Trotignon, that there exists an absolute constant $c$ for which every (theta, triangle)-free graph $G$ has treewidth at most $c\log (|V(G)|)$. A construction by Sintiari and Trotignon shows that this bound is asymptotically best possible, and (theta, triangle)-free graphs comprise the first known hereditary class of graphs with arbitrarily large yet logarithmic treewidth.

Our main result is in fact a  generalization of the above conjecture, that treewidth is at most logarithmic in $|V(G)|$ for every graph $G$ excluding the so-called \textit{three-path-configurations} as well as a fixed complete graph. It follows that several \textsf{NP}-hard problems such as \textsc{Stable Set}, \textsc{Vertex Cover}, \textsc{Dominating Set} and \textsc{Coloring} admit polynomial time algorithms in graphs excluding the three-path-configurations and a fixed complete graph.
\end{abstract}
\end{frontmatter}

%%% AUTHOR: body of paper starts here
\section{Introduction}
All graphs in this paper are finite and simple. Let $G = (V(G),E(G))$ be a graph. For a set $X \subseteq V(G)$ we denote by $G[X]$ the subgraph of $G$ induced by $X$. For $X \subseteq V(G)$, $G \setminus X$ denotes the subgraph induced by $V(G) \setminus X$. In this paper, we use induced subgraphs and their vertex sets interchangeably. Let $v \in V(G)$. The \emph{open neighborhood of $v$}, denoted by $N(v)$, is the set of all vertices in $V(G)$ adjacent to $v$. The \emph{closed neighborhood of $v$}, denoted by $N[v]$, is $N(v) \cup \{v\}$. Let $X \subseteq V(G)$. The \emph{open neighborhood of $X$}, denoted by $N(X)$, is the set of all vertices in $V(G) \setminus X$ with at least one neighbor in $X$. The \emph{closed neighborhood of $X$}, denoted by $N[X]$, is $N(X) \cup X$. If $H$ is an induced subgraph of $G$ and $X \subseteq V(G)$, then $N_H(X)=N(X) \cap H$ and $N_H[X]=N_H(X) \cup X$.
Let $Y \subseteq V(G)$ be disjoint from $X$. We say $X$ is \textit{complete} to $Y$ if all edges with an end in $X$ and an end in $Y$ are present in $G$, and $X$ is \emph{anticomplete}
to $Y$ if there are no edges between $X$ and $Y$.

For a graph $G = (V(G),E(G))$, a \emph{tree decomposition} $(T, \chi)$ of $G$ consists of a tree $T$ and a map $\chi: V(T) \to 2^{V(G)}$ with the following properties: 
\begin{enumerate}[(i)]
\itemsep -.2em
    \item For every $v \in V(G)$, there exists $t \in V(T)$ such that $v \in \chi(t)$. 
    
    \item For every $v_1v_2 \in E(G)$, there exists $t \in V(T)$ such that $v_1, v_2 \in \chi(t)$.
    
    \item For every $v \in V(G)$, the subgraph of $T$ induced by $\{t \in V(T) \mid v \in \chi(t)\}$ is connected.
\end{enumerate}

For each $t\in V(T)$, we refer to $\chi(t)$ as a \textit{bag of} $(T, \chi)$.  The \emph{width} of a tree decomposition $(T, \chi)$, denoted by $\width(T, \chi)$, is $\max_{t \in V(T)} |\chi(t)|-1$. The \emph{treewidth} of $G$, denoted by $\tw(G)$, is the minimum width of a tree decomposition of $G$. 

Treewidth, first introduced by Robertson and Seymour in their monumental work on graph minors, is an extensively studied graph parameter, mostly due to the fact that graphs of bounded treewidth exhibit interesting structural
\cite{RS-GMV} and algorithmic \cite{Bodlaender1988DynamicTreewidth} properties. Accordingly, one would naturally desire to understand the structure of graphs with large treewidth, and in particular the unavoidable substructures emerging in them. For instance, for each $k$, the {\em $(k \times k)$-wall}, denoted by $W_{k \times k}$, is a planar graph with maximum degree three and with treewidth $k$ (see Figure~\ref{fig:5x5wall}; a precise definition can be found in \cite{wallpaper}). Every subdivision of $W_{k \times k}$ is also a
graph of treewidth $k$. The unavoidable subgraphs of graphs with large treewidth are fully characterized by the Grid Theorem of Robertson and Seymour, the following.
\begin{theorem}[\cite{RS-GMV}]\label{wallminor}
For every integer $t$ there exists $c=c(t)$ such that 
every graph of treewidth at least $c$
contains a subdivision of $W_{t \times t}$ as a subgraph.
\end{theorem}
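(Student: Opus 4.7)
The plan is to derive the Grid Theorem from the duality between treewidth and certain highly-connected substructures, and then to extract a wall subdivision from such a substructure.

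First, I would invoke the \emph{bramble--treewidth duality} of Seymour and Thomas. A \emph{bramble} in $G$ is a family of pairwise touching connected subgraphs, where two subgraphs \emph{touch} if they share a vertex or are joined by an edge; its \emph{order} is the smallest size of a vertex set meeting every member. The easy direction of the duality shows that $\tw(G)+1$ is at least the maximum bramble order, and is proved by a straightforward induction on tree decompositions. The converse (a min-max theorem) produces, from $\tw(G) \geq c$, a bramble of order at least $c+1$, and it is this direction that we need here.

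The second step, which is the heart of the proof, is to extract a subdivision of $W_{t\times t}$ from a bramble of sufficiently large order. Following the bramble-based argument of Diestel, Jensen, Gorbunov and Thomassen, I would iteratively build $t$ pairwise internally disjoint ``horizontal'' paths joining two designated pieces of the bramble, and then weave $t$ ``vertical'' paths that cross each horizontal path in order, using the no-small-hitting-set property of the bramble to reroute around obstacles whenever a candidate path is blocked. This produces a subdivision of the $(t\times t)$-grid in $G$, from which a subdivision of $W_{t\times t}$ can be extracted since $W_{t \times t}$ is a subcubic subgraph of a grid of comparable size, and subdividing the relevant edges carries the grid subdivision onto a wall subdivision.

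The main obstacle is precisely this second step: turning an abstract collection of pairwise touching connected subgraphs into a geometric grid-like arrangement. Candidate rows and columns can interact uncontrollably, and careful pruning and rerouting are required to keep them internally disjoint and to force the correct incidence pattern. The bound $c=c(t)$ obtained via the bramble method is tower-type in $t$; however, only qualitative existence of $c(t)$ is required for our applications. Polynomial bounds due to Chekuri and Chuzhoy exist, but they require substantially more machinery and are not needed here.
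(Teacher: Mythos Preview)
The paper does not prove this theorem at all: it is stated as a known result and attributed to Robertson and Seymour \cite{RS-GMV}, with no argument given. So there is nothing to compare your proposal against on the paper's side; the authors simply quote the Grid Theorem as background.

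Your outline is a legitimate route to the Grid Theorem, but it is not Robertson and Seymour's original proof. What you sketch is essentially the later simplification due to Diestel, Jensen, Gorbunov and Thomassen, which goes via the Seymour--Thomas bramble duality and then builds the grid by threading horizontal and vertical paths through a high-order bramble. Robertson and Seymour's original argument in \cite{RS-GMV} predates brambles and proceeds quite differently, through an involved analysis of how a graph with no large grid minor can be decomposed. One factual slip: the DJGT bound is singly exponential in a polynomial of $t$, not tower-type; it was the original Robertson--Seymour bound that was astronomical. None of this matters for the present paper, which only needs the qualitative statement and is content to cite it.
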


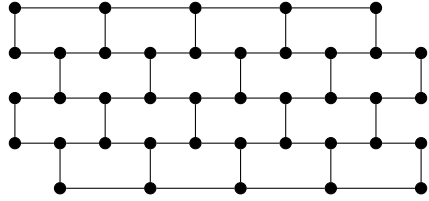
\begin{figure}
\centering

\begin{tikzpicture}[scale=2,auto=left]
\tikzstyle{every node}=[inner sep=1.5pt, fill=black,circle,draw]  
\centering

\node (s10) at (0,1.2) {};
\node(s12) at (0.6,1.2){};
\node(s14) at (1.2,1.2){};
\node(s16) at (1.8,1.2){};
\node(s18) at (2.4,1.2){};

\node (s20) at (0,0.9) {};
\node (s21) at (0.3,0.9) {};
\node(s22) at (0.6,0.9){};
\node (s23) at (0.9,0.9) {};
\node(s24) at (1.2,0.9){};
\node (s25) at (1.5,0.9) {};
\node(s26) at (1.8,0.9){};
\node (s27) at (2.1,0.9) {};
\node(s28) at (2.4,0.9){};
\node (s29) at (2.7,0.9) {};

\node (s30) at (0,0.6) {};
\node (s31) at (0.3,0.6) {};
\node(s32) at (0.6,0.6){};
\node (s33) at (0.9,0.6) {};
\node(s34) at (1.2,0.6){};
\node (s35) at (1.5,0.6) {};
\node(s36) at (1.8,0.6){};
\node (s37) at (2.1,0.6) {};
\node(s38) at (2.4,0.6){};
\node (s39) at (2.7,0.6) {};

\node (s40) at (0,0.3) {};
\node (s41) at (0.3,0.3) {};
\node(s42) at (0.6,0.3){};
\node (s43) at (0.9,0.3) {};
\node(s44) at (1.2,0.3){};
\node (s45) at (1.5,0.3) {};
\node(s46) at (1.8,0.3){};
\node (s47) at (2.1,0.3) {};
\node(s48) at (2.4,0.3) {};
\node (s49) at (2.7,0.3) {};

\node (s51) at (0.3,0.0) {};
\node (s53) at (0.9,0.0) {};
\node (s55) at (1.5,0.0) {};
\node (s57) at (2.1,0.0) {};
\node (s59) at (2.7,0.0) {};

\foreach \from/\to in {s10/s12, s12/s14,s14/s16,s16/s18}
\draw [-] (\from) -- (\to);

\foreach \from/\to in {s20/s21, s21/s22, s22/s23, s23/s24, s24/s25, s25/s26,s26/s27,s27/s28,s28/s29}
\draw [-] (\from) -- (\to);

\foreach \from/\to in {s30/s31, s31/s32, s32/s33, s33/s34, s34/s35, s35/s36,s36/s37,s37/s38,s38/s39}
\draw [-] (\from) -- (\to);

\foreach \from/\to in {s40/s41, s41/s42, s42/s43, s43/s44, s44/s45, s45/s46,s46/s47,s47/s48,s48/s49}
\draw [-] (\from) -- (\to);

\foreach \from/\to in {s51/s53, s53/s55,s55/s57,s57/s59}
\draw [-] (\from) -- (\to);

\foreach \from/\to in {s10/s20, s30/s40}
\draw [-] (\from) -- (\to);

\foreach \from/\to in {s21/s31,s41/s51}
\draw [-] (\from) -- (\to);

\foreach \from/\to in {s12/s22, s32/s42}
\draw [-] (\from) -- (\to);

\foreach \from/\to in {s23/s33,s43/s53}
\draw [-] (\from) -- (\to);

\foreach \from/\to in {s14/s24, s34/s44}
\draw [-] (\from) -- (\to);

\foreach \from/\to in {s25/s35,s45/s55}
\draw [-] (\from) -- (\to);

\foreach \from/\to in {s16/s26,s36/s46}
\draw [-] (\from) -- (\to);

\foreach \from/\to in {s27/s37,s47/s57}
\draw [-] (\from) -- (\to);

\foreach \from/\to in {s18/s28,s38/s48}
\draw [-] (\from) -- (\to);

\foreach \from/\to in {s29/s39,s49/s59}
\draw [-] (\from) -- (\to);

\end{tikzpicture}

\caption{$W_{5 \times 5}$}
\label{fig:5x5wall}
\end{figure}

Following the same line of thought, our motivation is to study the unavoidable induced subgraphs of graphs with large treewidth. Together with subdivided walls mentioned above, complete graphs and complete bipartite graphs are easily observed to have arbitrarily large treewidth: the complete graph $K_{t+1}$ and the complete bipartite graph $K_{t,t}$ both have treewidth $t$. Line graphs of subdivided walls form another family of graphs with unbounded treewidth, where the {\em line graph} $L(F)$ of a graph $F$ is the graph with vertex set $E(G)$, such that two vertices of $L(F)$ are adjacent if the corresponding edges of $G$ share an end.  One may ask whether these graphs are all we have to exclude as induced subgraphs to obtain a constant bound on the treewidth:

\begin{question}\label{usefulQ}
Is it true that for all $t$, there exists $c=c(t)$ such that every graph with treewidth more than $c$ contains as an induced subgraph either a $K_t$, or a $K_{t,t}$, or a subdivision of $W_{t \times t}$ or the line graph of a subdivision of $W_{t \times t}$?
\end{question}

Sintiari and Trotignon \cite{mainconj} provided a negative answer to this question. To describe their result, we require a few more definitions.  Let $H$ be a graph. We say $G$ \emph{contains} $H$ if $G$ has an induced subgraph isomorphic to $H$. We say $G$ is \emph{$H$-free} if $G$ does not contain $H$.  For a family $\mathcal{H}$ of graphs we say that $G$ is $\mathcal{H}$-free if $G$ is $H$-free for every $H \in \mathcal{H}$. Given a graph $G$, a {\em path in $G$} is an induced subgraph of $G$ that is a path. If $P$ is a path in $G$, we write $P = p_1 \dd \cdots \dd p_k$ to mean that $V(P) = \{p_1, \dots, p_k\}$, and $p_i$ is adjacent to $p_j$ if and only if $|i-j| = 1$. We call the vertices $p_1$ and $p_k$ the \emph{ends of $P$}, and say that $P$ is \emph{from $p_1$ to $p_k$}. The \emph{interior of $P$}, denoted by $P^*$, is the set $V(P) \setminus \{p_1, p_k\}$. The \emph{length} of a path $P$ is the number of edges in $P$.
 
A {\em theta} is a graph consisting of two non-adjacent vertices $a, b$ and three paths $P_1, P_2, P_3$ from $a$ to $b$ of length at least two, such that $P_1^*, P_2^*, P_3^*$ are mutually disjoint and anticomplete to each other. If a graph $G$ contains an induced subgraph $H$ which is a theta, and $a, b$ are the two vertices of degree three in $H$, then we say that $G$ contains a theta \emph{between $a$ and $b$}. Note that the complete bipartite graph $K_{2,3}$ is a theta. Also, it is readily seen that for large enough $k$, all subdivisions of $W_{k\times k}$ contain thetas, and of course line graphs of subdivisions of $W_{k\times k}$ contain triangles. So the following theorem provides a negative answer to Question~\ref{usefulQ}.

\begin{theorem}[\cite{mainconj}]
\label{thm:layered_wheel}
For every integer $\ell \geq 1$, there exists a (theta, triangle)-free graph $G_{\ell}$ such that $\tw(G_\ell) \geq \ell$. 
\end{theorem}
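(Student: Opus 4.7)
My plan is to build $G_\ell$ explicitly as a so-called \emph{layered wheel}, constructed iteratively in $\ell$ stages. Start with a sufficiently long induced path $L_0$. At stage $i = 1,\dots,\ell-1$, having built a graph $H_{i-1}$ whose ``top layer'' is an induced path $L_{i-1}$, introduce a new apex vertex $u_i$ together with a new induced path $L_i$ (arranged above $L_{i-1}$), and make $u_i$ adjacent to a carefully chosen subset $S_i \subseteq L_{i-1}$. The attachments are chosen to satisfy two requirements: (a) $S_i$ is an \emph{independent} set in $H_{i-1}$, so no triangle is created at $u_i$, and (b) any two vertices of $S_i$ are sufficiently far apart along $L_{i-1}$ (and the new path $L_i$ is suitably glued onto $L_{i-1}$ through intermediate vertices), so that chord-like shortcuts and theta configurations are avoided. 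The precise definition of $L_i$ and of the attachment rule has to be engineered so that the inductive invariants below are preserved; choosing $|L_0|$ and the density of $S_i$ to grow appropriately in $\ell$ will provide the treewidth lower bound.

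The next step is to verify that $G_\ell$ is (theta, triangle)-free by induction on $i$. Triangle-freeness is comparatively straightforward: any new triangle in $H_i$ must use $u_i$, but its neighbors in $H_{i-1}$ form an independent set by construction, and the only other neighbors of $u_i$ are the new path $L_i$, which is induced and chosen so that no two consecutive vertices along it share a common neighbor below. For theta-freeness, suppose for contradiction that $H_i$ contains a theta $\Theta$ with branch vertices $a,b$ and three internally disjoint paths $P_1,P_2,P_3$. Case-split on whether $u_i \in \{a,b\} \cup P_1^* \cup P_2^* \cup P_3^*$. If $u_i$ is absent, $\Theta$ lives in $H_{i-1}$, contradicting induction. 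If $u_i \in \{a,b\}$, then the three paths leave $u_i$ through three distinct vertices of $S_i$; since $S_i$ is widely spaced along $L_{i-1}$ and $H_{i-1}$ is theta-free with a limited stock of internally disjoint paths between distant vertices of $L_{i-1}$, one derives a contradiction from the structure of $H_{i-1}$. The case where $u_i$ is internal to some $P_j$ is handled analogously, using the sparseness of $S_i$ to force two of the paths to share a vertex of $L_{i-1}$.

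Finally, to certify $\tw(G_\ell)\geq \ell$, I would exhibit inside $G_\ell$ either (i) a subdivision of $W_{t\times t}$ for $t=t(\ell)$ sufficiently large and invoke Theorem~\ref{wallminor}, or (ii) directly prove that every set of at most $\ell-1$ vertices fails to be a balanced separator for an appropriate weight on $V(G_\ell)$. Route (ii) is more direct here: because each apex $u_i$ is joined to many widely spaced points of $L_{i-1}$, even after deleting $\ell-1$ vertices, enough of the apex structure survives to connect large portions of $L_0$ through short paths in the upper layers, forcing the large component to contain more than a $\tfrac{2}{3}$-fraction of the weight; this yields $\tw(G_\ell)\geq \ell$ via the standard balanced-separator characterization of treewidth.

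The main obstacle is unquestionably the theta-freeness verification in the inductive step. It is easy to propose constructions that deliver triangle-freeness and large treewidth but inadvertently create thetas at the interface between two consecutive layers: a new apex together with two previously existing induced paths between two of its neighbors in $S_i$ immediately yields a theta. So the construction must simultaneously ensure that $S_i$ is sufficiently spread out along $L_{i-1}$, that the layers $L_{i-1}$ and $L_i$ are glued by enough ``thickening'' to rule out short returns, and yet that enough connectivity survives to achieve treewidth $\ell$. Balancing these three competing constraints—independence of $S_i$, wide spacing of $S_i$, and density of interlayer attachments—is the heart of the argument.
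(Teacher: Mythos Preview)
This theorem is not proved in the present paper: it is quoted from Sintiari and Trotignon \cite{mainconj} and used as background motivation, so there is no ``paper's own proof'' to compare against.

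Your outline is in the spirit of the actual construction in \cite{mainconj} (the graphs there are literally called \emph{layered wheels}), but what you have written is a plan, not a proof. The crucial content --- the precise attachment rule for $S_i$, the precise way $L_i$ is glued to $L_{i-1}$, and the case analysis ruling out thetas --- is exactly what you defer with phrases like ``carefully chosen,'' ``has to be engineered,'' and ``one derives a contradiction from the structure of $H_{i-1}$.'' In the real construction these details are substantial: the spacing of $S_i$ and the gluing of consecutive layers interact in a way that makes the theta-freeness check a genuine multi-page case analysis, and getting it wrong (as you yourself note) immediately produces thetas at the interface. Your treewidth argument via balanced separators is also only gestured at; in \cite{mainconj} the lower bound is obtained differently. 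So the direction is right, but nothing here would survive as a proof without importing the actual construction and verification from \cite{mainconj}.
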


 The authors of \cite{mainconj} observed that the number of vertices of the graphs $G_{\ell}$ from  Theorem~\ref{thm:layered_wheel} is exponential in their treewidth, while for walls and their line graphs, the number of vertices is polynomial in the treewidth. This radical difference leads to the following conjecture. 

\begin{conjecture}[\cite{mainconj}]
\label{mainconj}
There exists a constant $c$ such that if $G$ is a (theta, triangle)-free graph,
then $\tw(G)\leq c \log(|V (G)|)$. 
\end{conjecture}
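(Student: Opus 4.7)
The plan is to reduce the conjecture to the construction of small balanced separators. By a well-known equivalence (see e.g.\ results of Harvey and Wood), it suffices to prove the following: for every (theta, triangle)-free graph $G$ and every non-negative vertex weighting $w$ of $G$, there exists a set $S \subseteq V(G)$ with $|S| = O(\log|V(G)|)$ such that every connected component of $G \setminus S$ has total $w$-weight at most $w(V(G))/2$. The remainder of the proof would then be devoted to constructing such a separator while exploiting the absence of thetas and triangles.

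A natural first step is BFS from an arbitrary vertex $v_0$, producing layers $L_0, L_1, \ldots$ around $v_0$. Standard intermediate-value reasoning on the prefix weights $w(L_0 \cup \cdots \cup L_i)$ yields a level $L_{i^*}$ whose deletion is already $w$-balanced. If $|L_{i^*}| = O(\log|V(G)|)$ we are done, so the interesting case is when the balanced level is large and we must extract a small balanced sub-separator $S \subseteq L_{i^*}$. This is where the structural hypotheses enter. Triangle-freeness forces the subgraph induced on $L_{i^*} \cup L_{i^*-1}$ to be bipartite in its relevant part, which prevents local "thickening". Theta-freeness provides the crucial restriction that between any two non-adjacent vertices there are at most two pairwise anticomplete induced paths of length at least two; the intended consequence is that the components of $G \setminus L_{i^*}$ can attach to $L_{i^*}$ only in very restricted patterns, since three components with sufficiently rich attachments, combined with the BFS tree back to $v_0$, would be forced to supply the three paths of a theta.

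The main obstacle is converting this qualitative restriction into a quantitative compression of $L_{i^*}$ down to a set of logarithmic size. My plan is to establish a structural lemma of roughly the following shape: whenever a (theta, triangle)-free graph $G$ is separated by a set $L$ into parts $A$ and $B$, there is $S \subseteq L$ with $|S| = O(\log|V(G)|)$ so that each component of $G \setminus S$ meets only a $w$-controlled portion of $A \cup B$. The proof would proceed by induction on $|V(G)|$ with the following dichotomy at each step: either some vertex $v \in L$ is involved in so many $A$-$B$ interactions that we can locate a theta through $v$ unless we place $v$ into $S$ and recurse on a strictly smaller instance, or no such vertex exists and a greedy halving of the $w$-weights distributed along $L$ already yields a separator of constant size. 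Combining this structural lemma with a standard recursive assembly of a tree decomposition (where each recursion step contributes $O(\log|V(G)|)$ to the bag size while halving the weight on each branch) yields the desired conclusion $\tw(G) = O(\log|V(G)|)$.

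The technically hardest part will be verifying that the dichotomy above is genuinely exhaustive: the definition of a theta requires the three paths to be pairwise \emph{anticomplete} and of length at least two, and guaranteeing both conditions forces a careful routing argument through the BFS layering, together with a pigeonhole over the possible attachment patterns that is sensitive to the triangle-free hypothesis. If this direct route stalls, a natural backup is to replace the single BFS by a nested family of BFS trees (rooted at different weighted centers) and to maintain, as an invariant of the recursion, not just balancedness but also an upper bound on the number of pairwise anticomplete paths that can survive across $L_{i^*}$; such an invariant is plausibly strong enough to let the theta-freeness trigger the desired contradiction.
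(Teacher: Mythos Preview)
Your proposal is a plan rather than a proof, and the central step---the ``structural lemma'' that compresses a balanced BFS layer $L_{i^*}$ down to a subset of size $O(\log |V(G)|)$---is asserted but not established. The dichotomy you propose (either some $v\in L$ forces a theta unless placed in $S$, or a greedy halving already works) is not shown to be exhaustive, and there is no concrete mechanism by which theta-freeness plus triangle-freeness limits the attachment patterns of components of $G\setminus L_{i^*}$ strongly enough to extract a logarithmic separator. In particular, the Sintiari--Trotignon layered wheels (the very graphs witnessing that the bound must be logarithmic) have BFS layers of linear size whose components attach in complicated overlapping ways, and nothing in your sketch explains why three anticomplete paths of length at least two must appear once the layer is large; the anticompleteness requirement is exactly what makes naive routing through a BFS tree fail. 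Your own closing paragraph concedes that this is the hard part and that the argument may ``stall.''

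The paper's route is entirely different and does not go through BFS or a single balanced-separator lemma. It isolates the set $\Hub(G)$ of wheel centres and proves two independent facts: (i) every minimal separator or potential maximal clique of $G$ contains only boundedly many non-hub vertices (Theorems~\ref{smallminimal} and~\ref{smallPMC}, via the three-in-a-tree lemma); and (ii) the K\"uhn--Osthus degeneracy bound lets one partition $\Hub(G)$ into $O(\log n)$ stable sets, each with controlled forward degree (Theorem~\ref{logncollections}). One then peels off these stable sets one at a time via ``central bags'' (Section~\ref{sec:centralbag}), using Theorem~\ref{starcutset} to guarantee that hubs cease to be hubs in the central bag, and glues the resulting tree decompositions together with only constant additive growth per layer. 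The logarithm thus enters through degeneracy and the number of peeling steps, not through any single separator.
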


We prove this conjecture. Indeed, our main result is a substantial generalization of Conjecture~\ref{mainconj} involving the so-called \textit{three-path-configurations}, which we define next. A {\em hole} in a graph is an induced cycle of length at least four. (The {\em length} of a hole is the number of vertices in it.)
 
A {\em pyramid} is a graph consisting of a vertex $a$ and a triangle $\{b_1, b_2, b_3\}$, and three paths $P_i$ from $a$ to $b_i$ for $1 \leq i \leq 3$, all of length at least one, such that for $i \neq j$, the only edge between $P_i \setminus \{a\}$ 
and $P_j \setminus \{a\}$  is $b_ib_j$, and  at most one of $P_1, P_2, P_3$ has
length exactly one. We say $a$ is the {\em apex} of the pyramid and $b_1b_2b_3$ is the {\em base} of the pyramid. 

A {\em prism} is a graph consisting of two triangles $\{a_1,a_2,a_3\}$ and  $\{b_1, b_2, b_3\}$, and three paths $P_i$ from $a_i$ to $b_i$ for $1 \leq i \leq 3$,
all of length at least one, and such that for $i \neq j$ the only edges between
$P_i$ and $P_j$ are $a_ia_j$ and $b_ib_j$.
A {\em pinched prism} is a graph consisting of a hole $H$
of length at least six, together with a vertex $b_1$ such that $N_H(b_1)$
is an induced two-edge matching. We call $b_1$ the {\em center} of the pinched prism. (This graph is often called a `line wheel', but
here we choose to emphasize its similarity to a prism with a ``pinched'' path).
A {\em generalized prism} is a graph that is either a prism or a pinched prism. 
\begin{figure}[ht]
\label{fig:3PCs}
\begin{center}
\begin{tikzpicture}[scale=0.29]

\node[inner sep=2.5pt, fill=black, circle] at (0, 3)(v1){}; 
\node[inner sep=2.5pt, fill=black, circle] at (-3, 0)(v2){}; 
\node[inner sep=2.5pt, fill=black, circle] at (3, 0)(v3){}; 
\node[inner sep=2.5pt, fill=black, circle] at (0, 0)(v4){}; 
\node[inner sep=2.5pt, fill=black, circle] at (0, -6)(v5){};

\node[inner sep=2.5pt, fill=white, circle] at (0, -4.8)(v21){};

\draw[black, thick] (v1) -- (v2);
\draw[black, thick] (v1) -- (v3);
\draw[black, thick] (v1) -- (v4);
\draw[black, dotted, thick] (v2) -- (v5);
\draw[black, dotted, thick] (v3) -- (v5);
\draw[black, dotted, thick] (v4) -- (v5);

\end{tikzpicture}
\hspace{0.7cm}
\begin{tikzpicture}[scale=0.29]

\node[inner sep=2.5pt, fill=black, circle] at (0, 3)(v1){}; 
\node[inner sep=2.5pt, fill=black, circle] at (-3, 0)(v2){}; 
\node[inner sep=2.5pt, fill=black, circle] at (3, 0)(v3){}; 
\node[inner sep=2.5pt, fill=black, circle] at (-3, -6)(v4){}; 
\node[inner sep=2.5pt, fill=black, circle] at (0, -3)(v5){}; 
\node[inner sep=2.5pt, fill=black, circle] at (3, -6)(v6){};

\node[inner sep=2.5pt, fill=white, circle] at (0, -4.8)(v21){}; 

\draw[black, thick] (v1) -- (v2);
\draw[black, thick] (v1) -- (v3);
\draw[black, dotted, thick] (v1) -- (v5);
\draw[black, dotted, thick] (v2) -- (v4);
\draw[black, dotted, thick] (v3) -- (v6);
\draw[black, thick] (v4) -- (v5);
\draw[black, thick] (v4) -- (v6);
\draw[black, thick] (v5) -- (v6);

\end{tikzpicture}
\hspace{0.7cm}
\begin{tikzpicture}[scale=0.29]

\node[inner sep=2.5pt, fill=black, circle] at (-3, 3)(v1){}; 
\node[inner sep=2.5pt, fill=black, circle] at (0, 0)(v2){}; 
\node[inner sep=2.5pt, fill=black, circle] at (3, 3)(v3){}; 
\node[inner sep=2.5pt, fill=black, circle] at (-3, -6)(v4){}; 
\node[inner sep=2.5pt, fill=black, circle] at (0, -3)(v5){}; 
\node[inner sep=2.5pt, fill=black, circle] at (3, -6)(v6){};

\node[inner sep=2.5pt, fill=white, circle] at (0, -4.8)(v21){}; 

\draw[black, thick] (v1) -- (v2);
\draw[black, thick] (v1) -- (v3);
\draw[black, thick] (v2) -- (v3);
\draw[black, dotted, thick] (v1) -- (v4);
\draw[black, dotted, thick] (v2) -- (v5);
\draw[black, dotted, thick] (v3) -- (v6);
\draw[black, thick] (v4) -- (v5);
\draw[black, thick] (v4) -- (v6);
\draw[black, thick] (v5) -- (v6);

\end{tikzpicture}
\hspace{0.7cm}
\begin{tikzpicture}[scale=0.29]

\node[inner sep=2.5pt, fill=black, circle] at (-3, 3)(v1){}; 
\node[inner sep=2.5pt, fill=black, circle] at (0, -1.5)(v2){}; 
\node[inner sep=2.5pt, fill=black, circle] at (3, 3)(v3){}; 
\node[inner sep=2.5pt, fill=black, circle] at (-3, -6)(v4){}; 
%\node[inner sep=2.5pt, fill=black, circle] at (0, -3)(v5){}; 
\node[inner sep=2.5pt, fill=black, circle] at (3, -6)(v6){};

\node[inner sep=2.5pt, fill=white, circle] at (0, -4.8)(v21){}; 

\draw[black, thick] (v1) -- (v2);
\draw[black, thick] (v1) -- (v3);
\draw[black, thick] (v2) -- (v3);
\draw[black, dotted, thick] (v1) -- (v4);
%\draw[black, thick] (v2) -- (v5);
\draw[black, dotted, thick] (v3) -- (v6);
\draw[black, thick] (v4) -- (v2);
\draw[black, thick] (v4) -- (v6);
\draw[black, thick] (v2) -- (v6);

\end{tikzpicture}
\end{center}
%\vspace{-0.75cm}
\caption{Theta, pyramid, prism, and pinched prism. The dotted lines represent paths of length at least one. In the pinched prism, the dotted lines represent paths of length at least two.}
\label{fig:forbidden_isgs}
\end{figure}
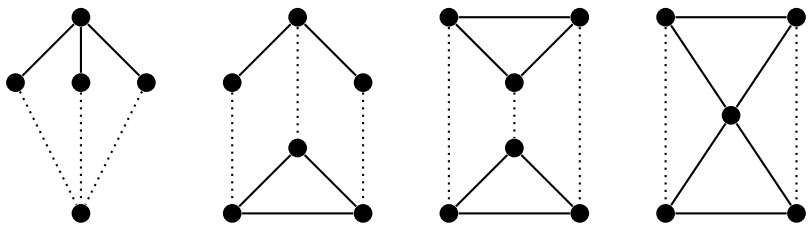

Finally, a graph is a {\em three-path-configuraton} if it is a theta, a prims, or a pyramid.
Let $\mathcal{C}$ be the class of (theta, pyramid, generalized prism)-free graphs. Also, for every integer $t \geq 1$, let  $\mathcal{C}_t$ be the class of (theta, pyramid, generalized prism, $K_t$)-free graphs.  Our main result is the following.

\begin{theorem}\label{mainthm}
  For every $t \geq 1$ there exists a constant $c_t$ such that every
  $G \in \mathcal{C}_t$ has treewidth at most $c_t \log(|V(G)|)$.
\end{theorem}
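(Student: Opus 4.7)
The plan is to derive Theorem~\ref{mainthm} from the following balanced-separator claim: for every $G\in\mathcal{C}_t$ on $n$ vertices and every weighting $w:V(G)\to[0,1]$ with $w(V(G))\le 1$, there is a set $X\subseteq V(G)$ with $|X|=O_t(\log n)$ such that every component of $G\setminus X$ has weight at most $1/2$. Since $\mathcal{C}_t$ is hereditary and there is a classical reduction (going back to Robertson--Seymour and made explicit by Dvo\v{r}\'ak and others) that converts a uniform bound $s$ on the weighted balanced-separator size of induced subgraphs into a treewidth bound of order $s$, this claim immediately gives $\tw(G)\le c_t\log|V(G)|$. To construct such an $X$, I would fix a vertex $v\in V(G)$ and let $L_i$ be the set of vertices at distance exactly $i$ from $v$; a standard averaging argument then produces an index $i$ for which $L_{i-1}\cup L_i\cup L_{i+1}$ already separates $V(G)$ into two parts each of weight at most $1/2$. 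Hence it suffices to find, inside a single BFS layer $L$ of $G$, a balanced separator of size $O_t(\log |L|)$.

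This ``separator within a layer'' statement is the technical core of the argument, and I would tackle it by contradiction via Menger's theorem. If $L$ admits no small balanced separator (for some weighting on $L$), there exist two sets $A,B\subseteq L$ of comparable weight that cannot be disconnected in $G$ by removing few vertices, so Menger produces many pairwise vertex-disjoint $A$--$B$ paths. Any two such paths, together with vertex-disjoint initial segments of shortest paths in $G$ back to the BFS root $v$, combine into an induced cycle; attaching a third such path to this cycle yields a skeleton of a theta, pyramid, prism, or pinched prism between carefully chosen endpoints. After a pruning step, one extracts an honest induced three-path-configuration in $G$, contradicting $G\in\mathcal{C}_t$; the $K_t$-exclusion is what prevents the skeleton from being absorbed into a large clique in the first few layers near $v$.

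The main obstacle I expect is precisely this extraction of an \emph{induced} three-path-configuration from the raw Menger paths. The definitions of theta, pyramid, prism, and pinched prism impose strict anticompleteness conditions between the interiors of the three paths together with length constraints on the paths themselves, whereas Menger paths may share ancestors, carry unwanted chords, and meet at triangle vertices with the wrong adjacency pattern. One has to prune, re-route through other BFS layers, and shortcut chords while preserving pairwise disjointness. The pinched-prism exclusion in the definition of $\mathcal{C}_t$ is exactly what is needed to handle the borderline case in which two of the three paths meet at a single vertex $b_1$ whose neighbourhood on the induced cycle is a two-edge matching, and the pyramid exclusion handles the symmetric case in which a triangle appears at one end. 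Once the in-layer lemma is proved, the BFS-layering reduction from the first paragraph combines with the balanced-separator-to-treewidth conversion to yield the advertised bound $\tw(G)\le c_t\log|V(G)|$.
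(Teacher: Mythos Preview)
Your first reduction is a genuine gap. The claim that ``it suffices to find, inside a single BFS layer $L$ of $G$, a balanced separator of size $O_t(\log|L|)$'' is not a standard argument and, as stated, does not work: a balanced separator of $G[L]$ need not separate $G$, since vertices on either side of the cut in $L$ may still be joined through adjacent layers. The only averaging fact about BFS layers is that some single $L_i$ is itself a balanced separator of $G$, with no control on $|L_i|$; replacing $L_i$ by a logarithmic-size subset while keeping it a separator of $G$ is exactly the problem you are trying to solve, not a reduction of it. Moreover, nothing in the sketch explains where the $\log n$ actually enters --- why should ``no small separator'' force $\Omega(\log n)$ Menger paths rather than a constant number? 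The construction of Theorem~\ref{thm:layered_wheel} shows the bound is tight, so this quantitative point cannot be hand-waved. The extraction of an induced three-path-configuration from raw Menger paths, which you correctly flag as the hard step, is essentially the whole proof, and the sketch offers no mechanism for it beyond listing the case analysis that would have to succeed.

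For contrast, the paper's argument uses neither BFS layers nor Menger. The logarithm comes from degeneracy: by K\"uhn--Osthus (Theorem~\ref{degenerate}) every $G\in\mathcal{C}_t$ is $\delta_t$-degenerate, which allows the set $\Hub(G)$ of wheel-centers to be partitioned into $O_t(\log n)$ stable sets of bounded forward degree (Theorem~\ref{logncollections}). Separately, every minimal separator and every potential maximal clique of $G$ is shown to contain only $O_t(1)$ non-hub vertices (Theorems~\ref{smallminimal} and~\ref{smallPMC}). The tree decomposition is then built by induction on the number of stable sets in the hub partition: one stable set at a time is peeled off via the ``central bag'' construction of Section~\ref{sec:centralbag}, and a structured (PMC-based) tree decomposition of the central bag is extended back to $G$ with only an additive $O_t(1)$ increase in bag size at each of the $O_t(\log n)$ steps. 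The $K_t$-freeness is used only through Ramsey bounds and the degeneracy theorem, not to control cliques near a BFS root.
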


A \emph{clique cutset} in a graph $G$ is a clique $K$ of $G$ such that $G \setminus K$ is not connected. A special case of Lemma 3.1 from \cite{cliquetw} shows that clique cutsets do not affect treewidth:

\begin{theorem}
  \label{nocliquecutset}
  For every graph $G$ there exists an induced subgraphs $G'$ of $G$ such that
  $G'$ has no clique cutset, and $\tw(G)=\tw(G')$.
  \end{theorem}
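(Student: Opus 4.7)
The plan is to proceed by induction on $|V(G)|$. If $G$ has no clique cutset, take $G'=G$. Otherwise, let $K$ be a clique cutset of $G$, let $C_1,\ldots,C_k$ (with $k\geq 2$) be the components of $G\setminus K$, and set $G_i=G[V(C_i)\cup K]$ for each $i\in\{1,\ldots,k\}$. Each $G_i$ is an induced subgraph of $G$ with $|V(G_i)|<|V(G)|$, so the inductive hypothesis applies.

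The central claim is that $\tw(G)=\max_{i}\tw(G_i)$. The inequality $\tw(G)\geq \max_i \tw(G_i)$ is immediate because treewidth is monotone under taking induced subgraphs. For the reverse inequality, I would take an optimal tree decomposition $(T_i,\chi_i)$ of $G_i$ for each $i$. Using the standard fact that any clique of a graph is contained in some bag of every tree decomposition, there is some $t_i\in V(T_i)$ with $K\subseteq \chi_i(t_i)$. I would then form a new tree $T$ by taking the disjoint union of $T_1,\ldots,T_k$, adding a fresh vertex $t^{*}$ adjacent to $t_1,\ldots,t_k$, and defining $\chi(t^{*})=K$ while keeping $\chi|_{V(T_i)}=\chi_i$. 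A direct check of the three tree-decomposition axioms, using that $K$ is the intersection of $V(G_i)$ and $V(G_j)$ for $i\neq j$ and that $K$ is a clique (so no edges of $G$ cross between distinct $C_i$'s outside of $K$), shows that $(T,\chi)$ is a tree decomposition of $G$ of width $\max_i\width(T_i,\chi_i)=\max_i\tw(G_i)$, using that $|K|-1\leq\tw(G_i)$ for every $i$ since $K$ is a clique of $G_i$.

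Given this equality, pick an index $i$ attaining $\tw(G_i)=\tw(G)$ and apply the inductive hypothesis to $G_i$: this yields an induced subgraph $G'$ of $G_i$ (and hence of $G$) with no clique cutset and with $\tw(G')=\tw(G_i)=\tw(G)$, completing the induction.

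The main obstacle is the upper bound $\tw(G)\leq \max_i\tw(G_i)$, i.e.\ the clique-sum gluing step. Making the gluing legitimate requires knowing that within any tree decomposition of $G_i$ there is a single bag containing all of $K$, which is a Helly-type fact proved by noting that for each $v\in K$ the set of bags containing $v$ induces a subtree of $T_i$, and any two vertices of $K$ are adjacent so some bag contains both; standard arguments then combine these subtrees to a common vertex of $T_i$. Once this lemma is in hand, the rest of the argument is routine bookkeeping.
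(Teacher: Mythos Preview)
Your argument is correct and is the standard inductive proof via clique-sum gluing. Note, however, that the paper does not actually prove this statement: it is quoted as a special case of Lemma~3.1 of \cite{cliquetw} (Bodlaender--Koster), so there is no in-paper proof to compare against. Your approach does dovetail nicely with the paper's own Theorem~\ref{cliqueinbag}, which is exactly the Helly-type fact you invoke to find a bag containing $K$ in each $(T_i,\chi_i)$; citing it directly would let you skip the final paragraph. One cosmetic simplification: rather than introducing the extra node $t^{*}$ with bag $K$, you can simply add edges $t_1t_i$ for $i\geq 2$ and avoid having to argue that $|K|-1\leq \tw(G_i)$ separately.
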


It follows that in order to prove Theorem~\ref{mainthm}, it is enough to prove the following:
\begin{theorem}\label{mainthmnoclique}
  For every $t\geq $ there exists a constant $c_t$ such that every
  graph in $G \in \mathcal{C}_t$ with no clique cutset has treewidth at most $c_t \log(|V(G)|)$.
\end{theorem}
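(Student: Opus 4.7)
The plan is to derive Theorem~\ref{mainthmnoclique} from a \emph{balanced separator lemma}: there exists $s_t$ depending only on $t$ such that, for every $G \in \mathcal{C}_t$ and every non-negative weighting $w$ on $V(G)$, there is a set $S \subseteq V(G)$ with $|S| \leq s_t$ such that every connected component of $G \setminus S$ has $w$-weight at most $w(V(G))/2$. Granting this, the standard recursive construction (at each recursion step applying Theorem~\ref{nocliquecutset} to dispose of clique cutsets that may have been created in the current subproblem) produces a tree decomposition of $G$ of width $O(s_t \log |V(G)|)$. Since $\mathcal{C}_t$ is closed under induced subgraphs, the lemma applies uniformly in every recursive call.

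To prove the balanced separator lemma, I would first reduce to the no-clique-cutset case via Theorem~\ref{nocliquecutset}. Take a minimal balanced separator $S$ for the weighting $w$; minimality ensures that each $s \in S$ has a neighbor in at least two ``heavy'' connected components of $G \setminus S$. If $|S|$ is larger than a Ramsey-type threshold depending on $t$, the $K_t$-freeness of $G$ forces, after cleanup, a large independent subset $S' \subseteq S$ whose vertices all ``see'' a common pair of heavy components $C_1, C_2$. The no-clique-cutset hypothesis then allows each $s \in S'$ to be extended into a $C_1$-to-$C_2$ path $P_s$ with $P_s^{\ast} \cap S = \{s\}$, and after another pruning step the $P_s$ may be chosen pairwise internally disjoint.

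The main obstacle is to upgrade this bundle of $|S'|$ internally disjoint $C_1$-to-$C_2$ paths, each passing through a distinct vertex of $S'$, into an \emph{induced} three-path-configuration. The idea is that three of the $P_s$, together with suitable small connecting pieces inside $C_1$ and $C_2$, should assemble into a theta, pyramid, prism, or pinched prism, according to whether the attachment inside $C_i$ is a non-edge or an edge, for $i=1,2$. The delicate part is to control cross-edges among the $P_s$ and between the $P_s$ and their attachments: this will require a further Ramsey-type cleanup that again uses $K_t$-freeness to pass to an anticomplete subfamily. The pinched-prism case is what forces the hypothesis to exclude \emph{generalized} prisms rather than only prisms, as otherwise a merging of two rungs along a common subpath would escape the argument. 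Any of the four outcomes contradicts $G \in \mathcal{C}_t$, which bounds $|S|$ by $s_t$ and yields the lemma.
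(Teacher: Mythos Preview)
Your balanced separator lemma is false, and this kills the whole plan. A graph class in which every member admits, for \emph{every} weighting, a $w$-balanced separator of size at most $s_t$ has treewidth bounded by $O(s_t)$: put all the weight on the current boundary set and run the standard Robertson--Seymour recursion, so that each bag has size at most $3s_t$. But Theorem~\ref{thm:layered_wheel} exhibits (theta, triangle)-free graphs---hence graphs in $\mathcal{C}_t$ for every $t\geq 3$---of arbitrarily large treewidth, so no such $s_t$ can exist. In particular your recursion would prove too much: it would give $\tw(G)=O_t(1)$ rather than $O_t(\log |V(G)|)$, contradicting the very construction that motivates Conjecture~\ref{mainconj}.

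Your sketch of the lemma's proof breaks at the point where you try to force a three-path-configuration from a large separator. The argument you outline is essentially what the paper carries out in Theorem~\ref{smallminimal}, but that theorem only bounds the \emph{non-hub} part of a minimal separator; the hub part can be arbitrarily large. Concretely, a large balanced separator in a layered wheel will consist almost entirely of hubs, and three hubs together with paths through two common components need not produce a theta, pyramid or generalized prism, because a hub can have many neighbours along any such path. The paper's actual proof confronts exactly this obstacle: it shows that PMCs have at most $R(t,4)$ non-hub vertices (Theorem~\ref{smallPMC}), then controls the hubs separately by partitioning $\Hub(G)$ into $O(\log n)$ stable sets with bounded forward degree (Theorem~\ref{logncollections}, via the degeneracy result Theorem~\ref{degenerate}), and peeling these off one at a time through the ``central bag'' machinery of Section~\ref{sec:centralbag}, which relies on the wheel-structure Theorem~\ref{starcutset}. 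The logarithm in the final bound comes from the number of stable sets in this hub partition, not from a separator recursion.
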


Note that since $\mathcal{C}_t$ contains all (theta, triangle)-free graphs for each $t \geq 3$, Theorem~\ref{mainthm} settles Conjecture~\ref{mainconj}. As discussed above, the construction from Theorem~\ref{thm:layered_wheel} shows that the bound provided by Theorem~\ref{mainthm} is asymptotically best possible. In fact, Theorem~\ref{mainthm} is the first result establishing a logarithmic bound on the treewidth in a hereditary (that is, closed under isomorphism and taking induced subgraphs) class of graphs. This is remarkable, especially because a considerable number of algorithmic advantages of bounded treewidth are still accessible in graphs of logarithmic treewidth. We elaborate on this in Section~\ref{algsec}. 

The proof of Theorem~\ref{mainthm} builds on a method developed in two previous papers of this series \cite{wallpaper, ACV} to bound the treewidth of graph classes with bounded maximum degree. Incidentally, unlike subdivided walls and their line graphs, the graphs $G_{\ell}$ from Theorem~\ref{thm:layered_wheel} contain vertices of arbitrarily large degree. So the following question is asked in \cite{septotree}:

\begin{question}[\cite{septotree}]\label{thetadegq}
Is it true that for every $\Delta>0$, there exists $c=c(\Delta)$ such that for every (theta,triangle)-free graph $G$ of maximum degree at most $\Delta$, we have $\tw(G)\leq c$?
\end{question}
In \cite{wallpaper}, with Dibek, Rz\k{a}\.{z}ewski and Vu\v{s}kovi\'c, we gave an affirmative answer to this question. More generally, it is also conjectured in \cite{Aboulker2020OnGraphs} that there is an affirmative answer to Question~\ref{usefulQ} restricted to graphs of bounded maximum degree. 

\begin{conjecture}[\cite{Aboulker2020OnGraphs}]
\label{conj:wall_bdd}
For all $k,\Delta > 0$, there exists $c=c(k,\delta)$ such that every graph with maximum degree at most $\Delta$ and treewidth more than $c$ contains a subdivision of $W_{k \times k}$ or the
line graph of a subdivision of $W_{k \times k}$ as an induced subgraph. 
\end{conjecture}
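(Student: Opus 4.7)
The plan is to combine the Grid Theorem (Theorem~\ref{wallminor}) with a cleaning procedure that exploits the bounded-degree hypothesis to upgrade a subdivided wall \emph{subgraph} into an \emph{induced} subdivided wall, or else into an induced line graph of such a wall. Concretely, I would first apply Theorem~\ref{wallminor} to obtain a subdivision $W^{*}$ of $W_{N \times N}$ inside $G$ as a (not necessarily induced) subgraph, where $N = N(k,\Delta)$ is chosen sufficiently large. I would then inspect the edges of $G$ with both endpoints in $V(W^{*})$, decomposing the ``chords'' of $W^{*}$ into three types: (i) shortcuts inside a single subdivided edge-path of $W^{*}$; (ii) edges between subdivided edge-paths sharing a common branch vertex; and (iii) edges between subdivided edge-paths whose branch endpoints are disjoint.

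Since $\Delta$ bounds the number of chord endpoints at each vertex of $W^{*}$, the total number of chord endpoints on each subdivided path is linear in its length. A Ramsey/pigeonhole extraction across the rows and columns of $W^{*}$ should then yield a sub-wall $W^{**}$ in which the chord pattern is highly regular: the paths all have comparable length, and the chord incidences are distributed almost uniformly. At this point the argument splits. If, after a further pigeonholing pass, a sub-wall of $W^{**}$ can be chosen with essentially no chords of types (i), (ii), (iii), then $G$ already contains an induced subdivision of $W_{k \times k}$.

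Otherwise, many chord edges must persist along each subdivided path. In this regime I would try to show that consecutive chords along the long subdivided paths glue together into cliques which mimic the local adjacency of edges in a wall, so that after identifying the right vertices and discarding the rest we obtain an induced line graph of a subdivided wall. The interplay between types (i), (ii), (iii) is critical: type (ii) chords near a branch vertex of $W^{*}$ tend to create triangles with the branch, which is compatible with a line-graph outcome, while type (iii) chords are what one must primarily kill via pigeonhole, since they can destroy both target structures.

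The hard part, I expect, is handling the \emph{mixed} regime, where the chord pattern neither thins out enough to produce an induced subdivision of $W_{k \times k}$ nor thickens enough to produce an induced line graph. Here one has to rule out thin obstructing configurations, in particular thetas, pyramids and prisms sitting across two or three subdivided paths of $W^{*}$. This is where the bounded-degree hypothesis (absent in the original Question~\ref{usefulQ}) is indispensable. I would try to exploit the three-path-configuration machinery developed in this series together with Theorem~\ref{mainthm}: a graph in $\mathcal{C}$ with bounded degree has bounded treewidth (being both in $\mathcal{C}_{\Delta+2}$ and having at most $2^{O(\Delta)}$ vertices in any ball, which bounds $\log |V(G)|$ in the relevant pieces), so any counterexample must contain a three-path-configuration, and each three-path-configuration should serve as a seed from which either a subdivided wall or its line graph can be grown back inside the cleaned $W^{**}$.
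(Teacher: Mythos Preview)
The statement you are attempting to prove is Conjecture~\ref{conj:wall_bdd}, which the paper explicitly records as \emph{open}: immediately after stating it, the paper says ``This is still open, while several interesting special cases of it are proved in earlier papers of this series.'' There is therefore no proof in the paper to compare your proposal against; you are attempting to settle an open problem.

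Your proposal is not a proof but a plan, and it has a genuine gap precisely where you yourself flag the difficulty. The ``mixed regime'' paragraph is the entire content of the conjecture: the cleaning procedure you describe for types (i)--(iii) chords is standard and well known not to suffice on its own, and your suggested rescue via Theorem~\ref{mainthm} does not work as stated. You write that a graph in $\mathcal{C}$ with bounded degree has bounded treewidth because it lies in $\mathcal{C}_{\Delta+2}$ and ``has at most $2^{O(\Delta)}$ vertices in any ball, which bounds $\log |V(G)|$ in the relevant pieces.'' But Theorem~\ref{mainthm} bounds $\tw(G)$ by $c_t \log |V(G)|$ for the whole graph $G$, and bounded degree places no bound on $|V(G)|$; bounding the size of balls is irrelevant. (It is true, via \cite{wallpaper}, that graphs in $\mathcal{C}$ of bounded degree have bounded treewidth, but that is a separate theorem with a different proof, not a consequence of Theorem~\ref{mainthm}.) Even granting that fact, your final sentence --- that any three-path-configuration ``should serve as a seed from which either a subdivided wall or its line graph can be grown back'' --- is an assertion, not an argument, and is exactly the step no one currently knows how to carry out.
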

This is still open, while several interesting special cases of it are proved in earlier papers of this series \cite{wallpaper, ACV}. In the same vein, the following may be true as far as we know (this is a variant of a conjecture of \cite{mainconj}): 
\begin{conjecture}
\label{wallconjnobdd}
For all $t \geq 0$ there exists $c=c(t)$ such that if $G$ is a graph with no $K_t$, no $K_{t, t}$, no subdivision of $W_{t \times t}$ and no line graph of a subdivision of $W_{t \times t}$ as induced subgraphs, then $\tw(G) \leq c\log(|V(G)|)$. 
\end{conjecture}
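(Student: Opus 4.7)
The plan is to combine Theorem~\ref{mainthm} with a new structural dichotomy that pins down the role of three-path-configurations. Since any graph satisfying the hypotheses of Conjecture~\ref{wallconjnobdd} is in particular $K_t$-free, Theorem~\ref{mainthm} already delivers a $c\log(|V(G)|)$ bound whenever $G$ happens to be (theta, pyramid, generalized prism)-free. The remaining task is to handle graphs in which some three-path-configuration appears: either the configuration can be localized using a small balanced separator, or it can be grown into a forbidden subdivision of $W_{t\times t}$ or into the line graph of such a subdivision.

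After applying Theorem~\ref{nocliquecutset} to assume $G$ has no clique cutset, the main new ingredient I would pursue is an \emph{amplification lemma}: in a graph with no $K_t$, no $K_{t,t}$, no subdivision of $W_{t\times t}$, and no line graph of such a subdivision, one cannot pack more than a bounded number of internally disjoint three-path-configurations arranged ``in parallel''. The intuition is that a large parallel family admits Menger-style routing of three internally disjoint induced paths between carefully chosen branch sets, which, combined with the configurations themselves as branch gadgets (thetas and pyramids contributing degree-three branches, prisms and pinched prisms contributing triangle branches), assembles into a subdivision of $W_{t\times t}$ or of its line graph. The $K_{t,t}$-free hypothesis would be used to cap the multiplicity of induced paths between small vertex sets so that the routed paths are ``spread out'' and produce a wall rather than a bipartite-like obstruction.

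Granting the amplification lemma, I would then build the tree decomposition by following the recursion of \cite{wallpaper, ACV} used in Theorem~\ref{mainthm}: at each step, extract a central bag of size $O(\log|V(G)|)$ covering every three-path-configuration in the current piece, and recurse on the components of the remainder. Once a piece is three-path-configuration-free, Theorem~\ref{mainthm} closes the recursion; otherwise the amplification lemma guarantees a strict decrease in the number of configurations, so the induction terminates in a bounded number of layers, and the bags can be arranged into a tree decomposition of width $O(\log|V(G)|)$.

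The chief obstacle is the amplification lemma itself. The layered-wheel construction of Theorem~\ref{thm:layered_wheel} shows that a triangle-free graph can host exponentially many thetas without producing any wall subdivision, so the use of the $K_{t,t}$ and line-wall exclusions in the routing step must be essential and rather delicate. Formalizing a Menger-type dichotomy that, in the presence of many disjoint configurations, outputs either a forbidden wall or line-wall subdivision or a logarithmic-size hitting set for all configurations is precisely the step where Conjecture~\ref{wallconjnobdd} presently stands open, and where the bulk of any future proof should be expected to lie.
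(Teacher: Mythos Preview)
The statement you are addressing is Conjecture~\ref{wallconjnobdd}, which the paper explicitly presents as open (``the following may be true as far as we know''); the paper offers no proof, and the conjecture remains unresolved. There is therefore nothing in the paper to compare your proposal against, and your write-up is not a proof but a programme whose central step---your ``amplification lemma''---you yourself identify as ``precisely the step where Conjecture~\ref{wallconjnobdd} presently stands open.'' That is an honest assessment, but it means the proposal does not close the gap.

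Two further remarks. First, your invocation of Theorem~\ref{thm:layered_wheel} is mistaken: the layered wheels of Sintiari and Trotignon are \emph{theta-free}, so they do not ``host exponentially many thetas''; they are (theta, triangle)-free graphs of unbounded treewidth, and they fall entirely within the scope of Theorem~\ref{mainthm}. Second, even granting some amplification lemma, your recursion sketch is vague about why the three-path-configurations in a piece should admit a hitting set of size $O(\log n)$, and about why removing such a set would leave pieces on which induction applies. The central-bag machinery of Sections~\ref{sec:centralbag}--\ref{sec:proof} relies essentially on the absence of three-path-configurations (via Theorem~\ref{starcutset} and Lemma~\ref{nohub}) to control hub neighbourhoods and closures; nothing in the paper suggests that an analogous mechanism survives once thetas, pyramids, and prisms are permitted.
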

We conclude this section with the following result, which is an immediate consequence of the Helly property of subtrees of a tree:

\begin{theorem}[\cite{diestel}]
  \label{cliqueinbag}
  Let $G$ be a graph, let $K$ be a clique of $G$, and let $(T, \chi)$ be a tree decomposition of $G$. Then, there is $v \in V(T)$ such that $K \subseteq \chi(v)$.
  \end{theorem}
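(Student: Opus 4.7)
The plan is a direct invocation of the Helly property for subtrees of a tree, which states that any finite collection of pairwise intersecting subtrees of a tree has a common vertex. Since $G$ is finite, so is $K$, and this is exactly the setup Helly is designed for.

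First, for each $v \in K$ I would define $T_v$ to be the subgraph of $T$ induced by $\{t \in V(T) : v \in \chi(t)\}$. Property (i) of a tree decomposition ensures that this vertex set is non-empty, and property (iii) ensures that it is connected; hence $T_v$ is a subtree of $T$.

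Next, I would verify pairwise intersection. Take any two distinct $u, v \in K$; since $K$ is a clique, $uv \in E(G)$, so by property (ii) there exists $t \in V(T)$ with $\{u,v\} \subseteq \chi(t)$. Such a $t$ lies in $V(T_u) \cap V(T_v)$, establishing that the family $\{T_v : v \in K\}$ consists of pairwise intersecting subtrees of $T$. Applying the Helly property produces a vertex $t^* \in V(T)$ common to every $T_v$, which by definition means $v \in \chi(t^*)$ for each $v \in K$, i.e.\ $K \subseteq \chi(t^*)$.

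There is essentially no obstacle, since the Helly property is the cited ingredient from \cite{diestel}. If one wanted a self-contained argument, the Helly property itself can be proved by induction on $|K|$: orient each edge of $T$ toward (say) the smallest-indexed subtree containing one endpoint, and observe that following these orientations in a finite acyclic setting must terminate at a sink lying in all $T_v$. But nothing subtle is needed beyond this standard lemma, so the entire argument is essentially a one-line consequence once the subtrees $T_v$ are introduced.
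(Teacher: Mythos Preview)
Your proposal is correct and matches the paper's approach exactly: the paper does not give a proof but simply remarks that the result ``is an immediate consequence of the Helly property of subtrees of a tree,'' which is precisely the argument you spell out.
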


\subsection{Proof outline and organization}

Let us now discuss the main ideas of the proof of
Theorem~\ref{mainthm}. We will give precise definitions of the concepts used below later in the paper; our goal here is to sketch  a road map of where
we are going. By Theorem~\ref{nocliquecutset} we may assume that the graph
in question does not admit a clique cutset.
By Theorem~\ref{contain_cube} we may
restrict our attention to cube-free graphs (the ``cube'' is a certain eight vertex graph defined later). Obtaining a tree decomposition is usually closely related to
producing  a collection of ``non-crossing decompositions,'' which  roughly
means that the decompositions
``cooperate'' with each other, and the pieces that are obtained when the graph
is simultaneously decomposed by all the decompositions in the collection
``line up'' to form a tree structure.

In the case of graphs in $\mathcal{C}_t$, there is a natural family of decompositions to turn to;  they correspond to special vertices of the graph called
``hubs,'' and are discussed in Section~\ref{cutsets}.
Unfortunately,
these natural decompositions are very far from being non-crossing, and therefore
we cannot use them in traditional ways to get tree decompositions. We were able
to overcome this issue in \cite{ACV} by using a bound on the maximum degree of the graph, but the same methods do not apply when no such bound exists.
What we can do instead is use degeneracy (that is given by  a result of
\cite{KuhnOsthus}) to partition the set
of all hubs (which yields a partition of all the natural decompositions)
of an $n$-vertex  graph $G$ in $\mathcal{C}_t$  into collections $S_1, \dots, S_p$, where each $S_i$ is ``non-crossing''
(this property is captured in Lemma~\ref{looselylaminar}), $p \leq C(t) \log n$ (where $C(t)$ only depends on $t$ and works for all $G \in \mathcal{C}_t$),
and vertices in $S_i$ have a bounded (as a function of $t$) number of
neighbors in
$\bigcup_{j=i}^p S_j$. Our main result is that the treewidth of $G$  is
bounded by a  linear  function of $p+\log n$.

It follows immediately
  from  Theorem~\ref{structured} and Theorem~\ref{smallPMC} that the treewidth of a
  graph in $\mathcal{C}_t$ is bounded (by a constant depending on $t$) if
  $G$ has no hubs; thus we may assume that $p>0$.
    It is sometimes the case that one of the hubs we get is not ``useful'' to us,
  and then we turn to the arguments in
  Section~\ref{sec:neighborhood_construction}.

In the general case, for  $p>0$, we proceed as follows.   We first decompose
  $G$, simultaneously, by all the decompositions corresponding to the hubs in $S_1$.
  This allows us to define a natural induced subgraph $\beta(S_1)$ of $G$
  that we call the ``central bag'' for $S_1$. The parameter $p$
  is smaller for $\beta(S_1)$ than it is for $G$, and so we can use induction
  to obtain a bound on the treewidth of $\beta(S_1)$.
  We then  start with a special optimal tree decompositon of $\beta(S_1)$,
  where each bag is a ``potential maximal clique'' (see Section~\ref{PMC}).
  Also inductively (this time on the number of vertices) we have tree
  decompositions for each component of $G \setminus \beta(S_1)$.

  Now  we use the special nature of our ``natural decompositions'' and properties of potential maximal cliques to combine the
  tree decompositions above into a tree decomposition of $G$, where the size
  of the bag only grows by an additive constant. This is possible
  because in the growing process   all we need to do is
  add to each existing bag the neighbor sets of some non-hub vertices in that
bag.    As the number of such vertices in each bag is bounded
  by Theorem~\ref{smallPMC}, and  due to the ``degeneracy'' propery of
  the partition $S_1, \dots, S_p$, we can ensure a bound on the growth.

  The paper is organized as follows. In Section~\ref{PMC} we introduce potential maximal cliques and discuss their properties. In Section~\ref{cutsets} we
  prove structural results guaranteeing the existence of useful decompositions.
  In Section~\ref{Hubs} we discuss the bounds on the number of non-hub vertices
  in minimal separators and in potential maximal cliques. Section~\ref{sec:neighborhood_construction} contains lemmas that will allow us to deal with hubs
  that do not fall into the framework of Section~\ref{sec:centralbag}.
  In Section~\ref{sec:centralbag} we discuss collections of non-crossing
  decompositions and properties of their central bags.  In Section~\ref{sec:collections} we show how to construct the partition $S_1, \dots, S_p$.
  Section~\ref{sec:proof} puts together the results of all the previous sections
  to prove Theorem~\ref{mainthm}. Finally, Section~\ref{algsec}
  discusses algorithmic consequences of Theorem~\ref{mainthm}.

\section{Potential maximal cliques} \label{PMC}

In the proof of Theorem~\ref{mainthm} we will use a special kind of tree
decomposition that we explain now.

For a graph $G$ and a set $F \subseteq \binom{V(G)}{2} \setminus E(G)$, we denote by $G+F$ the graph obtained from $G$ by making the pairs in
$F$ adjacent. 
A set $F \subseteq \binom{V(G)}{2} \setminus E(G)$ 
is a \emph{chordal completion}
or \emph{fill-in} of $G$ if $G+F$ is chordal; a chordal completion is \emph{minimal}
if it is inclusion-wise minimal. 
Let $X \subseteq V(G)$. The set $X$ is a \emph{minimal separator} if there exist $u, v \in V(G)$ such that $u$ and $v$ are in different connected components of $G \setminus X$, and $u$ and $v$ are in the same connected component of $G \setminus Y$ for every $Y \subsetneq X$.
A component $D$ of $G \setminus X$  is a \emph{full component} for $X$ if $N(D) = X$. It is well-known that a set $X \subseteq V(G)$ is a minimal separator if and only if there are at least two distinct full components for $X$.

A \emph{potential maximal clique} (PMC) of a graph $G$ is a set $\Omega \subseteq V(G)$ such that $\Omega$ is a maximal clique of $G + F$ for some minimal chordal completion $F$ of $G$. The following result
of \cite{BouchitteT01} characterizes PMCs: 

\begin{theorem}
\label{thm:PMC_characterization}
A set $\Omega \subseteq V(G)$ is a PMC of $G$ if and only if: 
\begin{enumerate}
    \item \label{PMC1} for every distinct $x, y \in \Omega$ with $xy \not \in E(G)$, there exists a component $D$ of  $G \setminus \Omega$ such that $x, y \in N(D)$. 
    \item for every component $D$ of $G \setminus \Omega$ it holds that $N(D) \subsetneq \Omega$.
\end{enumerate}
\end{theorem}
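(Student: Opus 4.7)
The plan is to prove the two directions of the equivalence separately, with the reverse direction requiring an explicit construction.

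For the forward direction, I would fix a minimal chordal completion $F$ of $G$ such that $\Omega$ is a maximal clique of $G+F$. To prove condition (1), take non-adjacent $x,y \in \Omega$ in $G$; then $xy \in F$. By minimality of $F$, the graph $(G+F)-xy$ is not chordal, so it contains a chordless cycle $C$, and $xy$ is necessarily a chord of $C$ in $G+F$, hence $x,y \in V(C)$ are non-consecutive on $C$. Because $\Omega$ is a clique of $G+F$ and $C$ is chordless, no third vertex of $\Omega$ can lie on $C$ (such a vertex would be adjacent in $G+F$ to both $x$ and $y$, contradicting chordlessness of $C$). The two $x$-to-$y$ subpaths of $C$ therefore have all interior vertices outside $\Omega$, each contained in a single component of $G \setminus \Omega$; one then deduces, using that every fill edge in a minimal triangulation is forced by a structure in $G$ itself (concretely, by tracing the cycle back into $G$), that $x$ and $y$ have a common component $D$ with $x,y \in N_G(D)$. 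To prove condition (2), suppose toward contradiction that $N(D) = \Omega$ for some component $D$ of $G \setminus \Omega$. Using the clique-tree structure of $G+F$ and that $\Omega$ is maximal, the component $D$ together with the cliquified $\Omega$ can be exploited to reroute at least one fill edge of $F$ through $D$, producing a strictly smaller chordal completion and contradicting the minimality of $F$.

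For the reverse direction, assume $\Omega$ satisfies (1) and (2). I would construct a fill-in $F$ explicitly as follows. Set $F_0 = \binom{\Omega}{2} \setminus E(G)$, so that $\Omega$ is a clique in $G+F_0$. For each component $D$ of $G \setminus \Omega$, let $G_D$ be the graph on vertex set $N[D]$ with edge set $E(G[N[D]]) \cup \binom{N(D)}{2}$, and fix any minimal chordal completion $F_D$ of $G_D$; note that $F_D \subseteq \binom{N[D]}{2}$. Define $F = F_0 \cup \bigcup_D F_D$. Then $G+F$ is chordal by a standard clique-tree argument: its maximal cliques decompose along the cliques $N(D) \subseteq \Omega$, and each ``block'' is chordal. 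Moreover, $\Omega$ is a clique of $G+F$ by construction, and it is maximal because any $v \notin \Omega$ lies in some component $D$, and condition (2) gives a vertex $w \in \Omega \setminus N(D)$ with $vw \notin E(G+F)$.

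The main obstacle is verifying that the constructed $F$ is inclusion-wise minimal. For this, it suffices to exhibit, for every $e \in F$, a chordless cycle in $(G+F)-e$. For $e \in F_D$ this follows directly from minimality of $F_D$ in $G_D$, combined with the fact that $N[D]$ induces the same subgraph in $G+F$ as in $G_D + F_D$ (since other $F_{D'}$ and $F_0$ add edges only outside $D$ or inside $N(D)$, which is already a clique in $G_D$). For $e = xy \in F_0$ with $x, y \in \Omega$, condition (1) yields a component $D^*$ with $x,y \in N_G(D^*)$; a shortest $x$-to-$y$ path through $D^*$ in $G$, combined with a second internally disjoint $x$-to-$y$ path (obtained either through a full component, using the minimal-separator structure behind (1), or directly through another component or vertex of $\Omega$), furnishes a chordless cycle in $(G+F)-xy$ having $xy$ as its unique chord in $G+F$. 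The delicate points, on which the rigor hinges, are ensuring in the forward direction that the witnessing component sits in $G$ (not merely in $G+F$), and ensuring in the reverse direction that the combined $F_D$'s do not conspire to create a removable edge inside $\Omega$.
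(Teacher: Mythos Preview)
The paper does not prove this theorem at all: it is quoted verbatim from Bouchitt\'e and Todinca \cite{BouchitteT01} as a known characterization, with no argument supplied. There is therefore nothing in the paper to compare your proposal against.

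As for the proposal itself, the architecture is the standard one and is sound, but two steps are genuinely incomplete as written. In the forward direction you obtain a chordless cycle $C$ in $(G+F)-xy$ and then need $x,y\in N_G(D)$ for some component $D$ of $G\setminus\Omega$; the interior of $C$ lives in a component of $(G+F)\setminus\Omega$, and passing to $G$ requires the (non-obvious) fact that in a minimal triangulation the connected components of $G\setminus\Omega$ and of $(G+F)\setminus\Omega$ coincide with the same neighbourhoods. This in turn rests on the lemma that every fill edge of a minimal triangulation lies inside a minimal separator of $G+F$ (Parra--Scheffler / Kloks--Kratsch); you gesture at this but do not state or invoke it. Your argument for condition (2) (``reroute a fill edge through $D$'') is too vague to be a proof; the clean route is again via the same component-preservation lemma together with the elementary chordal-graph fact that no component of $H\setminus\Omega$ is full for a maximal clique $\Omega$ of a chordal graph $H$.

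In the reverse direction your construction of $F$ is the right one, but the minimality check for an edge $xy\in F_0$ is not finished. Condition (1) gives you a component $D^*$ with $x,y\in N_G(D^*)$ and condition (2) gives a vertex $z\in\Omega\setminus N(D^*)$; the intended chordless cycle is $x\hbox{-}z\hbox{-}y\hbox{-}P\hbox{-}x$ with $P$ through $D^*$. But $F_{D^*}$ may add chords across any $G$-path $P$, so you must take $P$ inside $G_{D^*}+F_{D^*}$ and argue that $z$ (being outside $N[D^*]$ in $G+F$) sees nothing on $P^*$. This works, but your text does not say it.
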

If $\Omega \subseteq V(G)$ and $D$ is a component of $G \setminus \Omega$ with $x, y \in N(\Omega)$ non-adjacent (as in \eqref{PMC1} above), we say that $D$ \emph{covers} the non-edge $xy$. 

We also need the following result  of \cite{BouchitteT01} relating PMCs and
minimal separators: 

\begin{theorem}
\label{prop:PMC_adhesions_are_seps}
Let $\Omega \subseteq V(G)$ be a PMC of $G$. Then, for every component $D$ of  $(G \setminus \Omega)$, the set $N(D)$ is a minimal separator of $G$. 
\end{theorem}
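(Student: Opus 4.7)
The plan is to use the stated characterization of minimal separators: a set $S \subseteq V(G)$ is a minimal separator if and only if $G \setminus S$ admits at least two distinct full components (components $F$ with $N(F) = S$). So setting $S = N(D)$, I want to exhibit two such full components.

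First I would verify that $D$ itself is a full component of $G \setminus S$. Since $D$ is a component of $G \setminus \Omega$ and $S = N(D) \subseteq \Omega$, the set $D$ is still connected in $G \setminus S$, all its outside neighbors lie in $S$, and so $D$ is a component of $G \setminus S$ with $N(D) = S$; this is immediate from the definition of $S$.

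Next I would construct a second full component as follows. By Theorem~\ref{thm:PMC_characterization}\eqref{PMC1} (property 2 of PMCs), $N(D) \subsetneq \Omega$, so $\Omega \setminus S$ is nonempty; pick any $y \in \Omega \setminus S$ and let $C$ be the component of $G \setminus S$ containing $y$. Since $y \in \Omega$ and $\Omega \cap D = \emptyset$, we have $C \neq D$. By definition of component, $N(C) \subseteq S$, so it remains to prove $S \subseteq N(C)$. Fix $x \in S$. If $xy \in E(G)$, then $y \in C$ already witnesses $x \in N(C)$. Otherwise $x,y$ are two non-adjacent vertices of $\Omega$, and by Theorem~\ref{thm:PMC_characterization}\eqref{PMC1} there is a component $D'$ of $G \setminus \Omega$ with $x,y \in N(D')$. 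Then $D' \cap \Omega = \emptyset$, so $D' \cap S = \emptyset$ and thus $D' \subseteq V(G) \setminus S$; moreover $y$ has a neighbor $y' \in D'$ and $y' \notin S$, so $y'$ lies in the same component of $G \setminus S$ as $y$, forcing $D' \subseteq C$. Since $x$ has a neighbor in $D' \subseteq C$, we get $x \in N(C)$, as required.

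Combining the two steps, $D$ and $C$ are distinct full components of $G \setminus S$, and hence $S = N(D)$ is a minimal separator. I do not foresee a real obstacle here; the only subtlety is making sure that the component $D'$ produced by the PMC axiom genuinely lies in $C$, which is why it is important to pick $y$ in $\Omega \setminus S$ first and then let $C$ be determined by $y$, so that every such $D'$ is automatically glued to $y$ through the edge $yy'$ with $y' \in D' \setminus S$.
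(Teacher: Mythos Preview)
Your argument is correct: you exhibit $D$ and the component $C$ of $G\setminus N(D)$ containing any $y\in\Omega\setminus N(D)$ as two distinct full components for $N(D)$, using both defining properties of a PMC from Theorem~\ref{thm:PMC_characterization} exactly where they are needed. This is the standard proof of this fact; the paper itself does not give a proof but cites the result from Bouchitt\'e and Todinca~\cite{BouchitteT01}.
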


Let us say that a tree decomposition $(T, \chi)$ of a graph $G$ is {\em structured} if $\chi(v)$ is a PMC of $G$ for every $v \in V(T)$.
We denote by $\omega(G)$ the maximum size of a clique in $G$.
The following is a striking but easy fact (this was observed by multiple authors in the past, but we include the proof here for completeness):

\begin{theorem}
  \label{structured}
  Every graph $G$ has a structured tree decomposition of width $\tw(G)$.
  \end{theorem}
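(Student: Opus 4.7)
The plan is to reduce the statement to classical facts about chordal graphs and minimal chordal completions. The key observation is that treewidth admits a well-known chordal characterization: for any graph $G$, one has
\[
\tw(G) = \min\{\omega(H)-1 : H \text{ is a chordal graph with } V(H)=V(G),\ E(G)\subseteq E(H)\}.
\]
Equivalently, there exists a chordal completion $F^\star$ of $G$ such that $\omega(G+F^\star) = \tw(G)+1$.

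Starting from such an $F^\star$, I would first pass to a minimal chordal completion $F \subseteq F^\star$; this is always possible by removing fill edges one by one while preserving chordality. Since $G+F$ is a subgraph of $G+F^\star$, we have $\omega(G+F) \le \omega(G+F^\star) = \tw(G)+1$, and since $G+F$ is itself a chordal supergraph of $G$, the reverse inequality $\omega(G+F)-1 \ge \tw(G)$ also holds, giving $\omega(G+F) = \tw(G)+1$. Crucially, by the definition of PMC, every maximal clique of $G+F$ is a PMC of $G$, precisely because $F$ is a \emph{minimal} chordal completion.

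Next I would invoke the standard clique-tree theorem for chordal graphs: any chordal graph $H$ admits a tree decomposition $(T,\chi)$ in which the vertices of $T$ are the maximal cliques of $H$ and $\chi$ is the identity map, arranged so that the vertices of $T$ whose bag contains any fixed $v\in V(H)$ induce a subtree. Applying this to $H = G+F$, we obtain a tree decomposition of $G+F$ of width $\omega(G+F)-1 = \tw(G)$ in which every bag is a maximal clique of $G+F$, and hence a PMC of $G$ by the previous paragraph. Since $G$ is a spanning subgraph of $G+F$, any tree decomposition of $G+F$ is also a tree decomposition of $G$, so $(T,\chi)$ is the required structured tree decomposition of $G$ of width $\tw(G)$.

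The only steps that require any care are (i) the existence of a chordal completion attaining $\omega=\tw(G)+1$ and the fact that passing to a minimal chordal completion underneath it does not increase $\omega$, and (ii) the clique-tree representation of chordal graphs; both are classical and can be cited. I do not foresee a genuine obstacle: the whole proof amounts to packaging these two ingredients together with the definition of a PMC, which is exactly why the authors describe the theorem as ``striking but easy.''
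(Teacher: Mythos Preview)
Your proposal is correct and follows essentially the same route as the paper: obtain a chordal completion of $G$ with clique number $\tw(G)+1$, pass to a minimal chordal completion beneath it, and read off a clique tree whose bags are then PMCs of $G$. The only cosmetic difference is that the paper produces the initial chordal completion by filling in the bags of an optimal tree decomposition, whereas you invoke the equivalent chordal characterization of treewidth directly.
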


\begin{proof}
  Let $(T',\chi')$ be a tree decomposition of $G$ of width $\tw(G)$. It is easy to check that
  the graph $G'$ obtained from $G$ by adding all edges $xy$ such
  that $x,y \in \chi(v)$ for some $v \in V(T)$ is chordal. It follows that
  there exists a minimal chordal completion $F$ of $G$ such that
  $F \subseteq E(G') \setminus E(G)$; let $G''=G+F$.  In particular, every clique of $G''$ is a subset of a clique of $G'$.
  Since by Theorem~\ref{cliqueinbag} every clique of $G'$ is contained in a bag $\chi(v)$ for some $v \in V(T)$, it follows that $\omega(G'') \leq \omega(G') \leq \tw(G)+1$.
  Next, since $G''$ is chordal, there is a tree decomposition $(T'',\chi'')$ of $G''$  such that $\chi''(v)$ is a clique of $G''$ (and therefore a PMC of $G$)
  for every $v \in V(T'')$. 
  Lastly, since $G$ is a subgraph of $G''$, it follows that $(T'',\chi'')$
  is a tree decomposition of $G$. Since $\omega(G'') \leq \tw(G)+1$,
  it follows that $\width(T'', \chi'')=\tw(G)$, as required.
  This proves Theorem~\ref{structured}.
\end{proof}

\section{Structural results} \label{cutsets}
In this section we establish some useful structural properties of (theta, pyramid, generalized prism)-free graphs containing either a `cube' or a `wheel'. Let us define these notions and state our theorems properly. 

The \textit{cube} is the graph with vertex set $\{a_1,\ldots,a_6,b_1,b_2\}$ in which $\{a_1,\ldots,a_6\}$ is a hole, $b_1$ is complete to $\{a_1,a_3,a_5\}$, $b_2$ is complete to $\{a_2,a_4,a_6\}$, and there are no other edges. Let $G$ be a graph. We say a graph $H$ is a \textit{clique blow-up} of $G$ if $V(H)$ is the disjoint union of $|V(G)|$ non-empty and pairwise disjoint cliques $(X_v;v\in G)$ such that for all distinct $u,v\in G$, if $uv\in E(G)$ then $X_u$ is complete to $X_v$ in $H$, and if $uv\notin E(G)$ then $X_u$ is anticomplete to $X_v$ in $H$. A partition $(V_1,V_2)$ of the vertex set of a graph $G$ is said to be a \textit{cube partition} if $V_1$ is a clique blow-up of the cube, $V_2$ is a clique and $V_1$ is complete to $V_2$. 

According to our first result, the following, it turns out that even with only thetas and pyramids excluded, containing a cube results in a structurally simple class of graphs.

\begin{theorem}\label{contain_cube}
Let $G$ be a (theta, pyramid)-free graph. If $G$ contains a cube, then $G$ admits either a clique cutset or a cube partition.
\end{theorem}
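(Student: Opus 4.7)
The plan is: assume $G$ has no clique cutset and construct a cube partition $(V_1,V_2)$. Fix a cube $C\subseteq G$ labeled as in the definition. Among all clique blow-ups of the cube in $G$ whose vertex set contains $V(C)$, choose a maximum one $V_1$ and write $V_1=X_{a_1}\cup\cdots\cup X_{a_6}\cup X_{b_1}\cup X_{b_2}$ for its blow-up cliques. Define $V_2=\{v\in V(G)\setminus V_1:v\text{ is complete to }V_1\}$ and $V_3=V(G)\setminus(V_1\cup V_2)$. The goal splits into showing $V_2$ is a clique and $V_3=\emptyset$, which together make $(V_1,V_2)$ the required cube partition.

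The claim that $V_2$ is a clique is immediate: if $v,v'\in V_2$ were non-adjacent, then since $\{a_1,a_3,a_5\}\subseteq V_1$ is pairwise non-adjacent in $G$ and both $v,v'$ are complete to $V_1$, the three paths $v\dd a_i\dd v'$ for $i\in\{1,3,5\}$ are internally disjoint with pairwise anticomplete (single-vertex) interiors and form a theta with ends $v,v'$, contradicting theta-freeness.

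For $V_3=\emptyset$, let $D$ be a component of $G[V_3]$. I will show $N(D)$ is a clique of $G$; then $N(D)$ is a clique cutset separating $D$ from the nonempty remainder $V_1\setminus N(D)$, contradicting the hypothesis. Because $V_2$ is a clique complete to $V_1$, this reduces to showing $\mathcal A:=\{u\in V(C):X_u\cap N(D)\neq\emptyset\}$ is a clique of $C$; since the cube is triangle-free, this means $\mathcal A\subseteq\{u,u'\}$ for some edge $uu'\in E(C)$, and then $N(D)\cap V_1\subseteq X_u\cup X_{u'}$, which is a clique in $V_1$.

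To prove $\mathcal A$ is a clique of $C$, assume it contains a non-adjacent pair $u,u'$. Pick $x\in X_u\cap N(D)$ and $x'\in X_{u'}\cap N(D)$; since $D$ is connected and $xx'\notin E(G)$ (as $X_u$ is anticomplete to $X_{u'}$), there is an induced path $P$ from $x$ to $x'$ with $P^{*}\subseteq D$ of length at least $2$. I then construct two further internally disjoint induced paths $P',P''$ from $x$ to $x'$ within $V_1$, with interiors pairwise anticomplete and anticomplete to $P^{*}$; this yields a theta with ends $x,x'$, a contradiction. The paths $P',P''$ come from short cube routes: length $2$ for the ``distance-$2$'' pairs $\{a_i,a_{i+2}\}$ and for non-adjacent pairs $\{a_i,b_j\}$ (using $a_{i\pm 1}$ and/or the common $b_j$); length $3$ for the antipodal pairs $\{a_i,a_{i+3}\}$ and for $\{b_1,b_2\}$ (using the two opposite arcs of the hole). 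In every case the two interiors lie in $V(C)\setminus\{u,u'\}$, and their pairwise non-adjacency follows from the bipartite $6$-cycle structure of the cube.

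The main obstacle is the case in which $\mathcal A$ is large enough that an interior vertex of $P'$ or $P''$ itself lies in $\mathcal A$, since then some vertex of $P^{*}\subseteq D$ could be adjacent to it and destroy the required anticompleteness. My plan to resolve this is to exploit the flexibility of the cube: for each non-adjacent pair in $\mathcal A$, several choices of $(P',P'')$ are available (two distinct length-$2$ detours, or routes on either arc of the hole, or routes through the centers $b_1,b_2$), and unless $\mathcal A$ fills essentially all of $V(C)$, some choice keeps both interiors inside $V(C)\setminus\mathcal A$. In the extremal case where $\mathcal A$ is very close to $V(C)$, the maximality of $V_1$ must be invoked: one checks that some $d\in D$ has $N(d)\cap V_1$ equal to the $V_1$-neighborhood of a class $X_{u_0}$, so that $d$ can be appended to $X_{u_0}$ to produce a strictly larger clique blow-up of the cube in $G$, contradicting the choice of $V_1$.
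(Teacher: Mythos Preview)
Your overall architecture matches the paper's: take a maximal cube partition, then show every outside component $D$ has a clique attachment and hence yields a clique cutset. The genuine gap is exactly where you flag ``the main obstacle.'' For a distance-$2$ pair $\{u,u'\}$ in the cube there are precisely two common neighbours, so if both lie in $\mathcal A$ your theta-building scheme already fails and you are forced into your extremal case. There you assert that maximality produces some $d\in D$ whose $V_1$-neighbourhood coincides with that of a class $X_{u_0}$, but you give no mechanism for finding such a $d$; this does not follow from $\mathcal A$ being large, and it is in fact the heart of the proof. Note too that absorbing $d$ into $X_{u_0}$ requires $d$ to be complete to $X_{u_0}$ itself, which you have not arranged.

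The paper's proof has the same endgame but a different organisation that avoids this trap. It first proves, by a careful case analysis using both theta-freeness \emph{and} pyramid-freeness, that every single vertex $w$ outside the partition has $N_S(w)$ a clique: one rules out non-clique patterns by exhibiting a theta or pyramid, and the two surviving patterns are exactly those where $w$ can be absorbed (into $V_2$ if $w$ is complete to $S$, or into one blow-up class $A_i$ otherwise), contradicting maximality of the whole partition $S=V_1\cup V_2$. Only after this single-vertex statement is the component version deduced, via a shortest path $R\subseteq G\setminus S$ whose interior is forced to be anticomplete to most of $V_1$. Your proposal essentially tries to jump straight to the component statement, and the missing single-vertex analysis is precisely the content you would have to supply to make the extremal case go through; you also never invoke pyramid-freeness, which the paper needs at several points.
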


We continue with more definitions. Let $G$ be a graph. Let $W$ be a hole in $G$ and $v \in G \setminus W$. A {\em sector} of $(W,v)$ is a path $P$ of $W$ of length at least one, such that both ends of $P$ are  adjacent to $v$ and $v$ is
anticomplete to $P^*$. A sector $P$ is {\em long} if $P^* \neq \emptyset$. 
A {\em wheel} in $G$ is a pair $(W,v)$ where $W$ is a hole of length at least five, $v$ has at least three neighbors in $W$ and $(W,v)$ has at least two long sectors.
For $v \in V(G)$ a wheel
$(W,v)$ is {\em optimal} if for every  wheel $(W',v)$  in $G$ we have
$|N_W(v)| \leq |N_{W'}(v)|$.
Let $\mathcal{C}^*$ be the class of all the cube-free graphs in $\mathcal{C}$.
Our second result is the following.

\begin{restatable}{theorem}{starcutset}
  \label{starcutset}
  Let $G \in \mathcal{C}^*$ and let $(W,v)$ be an optimal wheel in $G$.
  Then there is no component  $D$ of $G \setminus N[v]$
such that $W \subseteq N[D]$. 
\end{restatable}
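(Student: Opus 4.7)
Suppose for contradiction that such a component $D$ exists. Let $a_1, \dots, a_k$ be the neighbors of $v$ on $W$ in cyclic order and let $S_1, \dots, S_k$ be the corresponding sectors; then $k \geq 3$ and at least two of the $S_i$ are long. First I would establish two preparatory observations. For every long sector $S_i$, the set $S_i^*$ is a non-empty connected subset of $G \setminus N[v]$ contained in $N[D]$, hence $S_i^* \subseteq D$. And for every $l$, since $a_l \in N[D] \setminus D$, the vertex $a_l$ has at least one neighbor in $D$. It follows that $D$ is a connected subset of $G \setminus N[v]$ meeting the interior of every long sector and adjacent to every $a_l$, which provides considerable freedom to route induced paths through $D$ between prescribed points of $W$.

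The main strategy is to contradict the optimality of $(W,v)$ by producing an induced hole $W'$ for which $(W',v)$ is still a wheel but with $|N_{W'}(v)| < |N_W(v)|$. Picking two non-consecutive neighbors $a_i, a_j$ of $v$ on $W$ so that at least one of the two original long sectors lies on the arc $A$ of $W$ from $a_j$ to $a_i$ that avoids $a_{i+1}, \dots, a_{j-1}$ (such a pair exists whenever $k \geq 4$, because there are two long sectors and one may reindex so that a long one lies on the chosen side), I take an induced path $P$ from $a_i$ to $a_j$ with $P^* \subseteq D$, chosen to be minimal. Then $W' := A \cup P$ is an induced hole (any chord would yield a shorter $P$ of the same type), whose sectors as a wheel with apex $v$ are $P$ (which is long since $P^*$ avoids $N(v)$) together with the original sectors of $W$ along $A$. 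Hence $(W',v)$ is a wheel with at least two long sectors and $|N_{W'}(v)| = k - (j-i-1) < k$, contradicting optimality. A check on the length $|W'| = |P| + |A|$ shows $|A| \geq 3$ under our choice of $i,j$, so $|W'| \geq 5$ as required for a wheel.

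When $k = 3$, all pairs of $v$-neighbors are consecutive on $W$, the construction above cannot reduce $|N_W(v)|$, and I would instead locate a three-path-configuration directly. I would take a shortest induced path $Q$ in $G[D]$ between the interiors of two long sectors $S_i^*, S_j^*$, which must have at least one internal vertex in $D \setminus V(W)$ since $S_i^*$ and $S_j^*$ are non-adjacent in the induced cycle $W$, and then combine $Q$ with appropriate arcs of $W$ and with the path $a_i v a_j$ through $v$. A careful case analysis, depending on whether the third sector $S_l$ is short and on which of $v$'s neighbors (if any) have neighbors in the interior of $Q$, shows that the resulting configuration contains a theta (when $v$ is one branch vertex and $Q$ bridges two sector interiors without touching $a_l$), a pyramid (when a triangle involving $v$ and a short sector is available), or a generalized prism (when $Q$'s attachments form a two-edge matching on one of the sectors), in each case contradicting $G \in \mathcal{C}$.

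The principal technical obstacle will be inducedness: the interior $P^*$ (respectively $Q^*$) and the interiors of the retained long sectors all lie in the same component $D$, so chords between them could spoil the intended new hole. These are controlled by choosing $P$ (respectively $Q$) minimally, so that any chord yields a shorter admissible path; iterating eliminates all chords. Cube-freeness of $G$ is expected to be used exactly to dispose of the residual configuration in which the new hole has length six with $v$ attached to three alternating vertices, since otherwise the attachments forced inside $D$ by the preparatory observations produce a vertex playing the opposite polar role and completing a cube.
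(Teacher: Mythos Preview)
Your proposal has a genuine gap in the inducedness argument for the new hole $W' = A \cup P$ in the $k \geq 4$ case. You assert that ``any chord would yield a shorter $P$ of the same type,'' but this is not true for the most problematic chords. The arc $A$ that you retain contains the interiors of long sectors of $(W,v)$, and by your own preparatory observation those interiors lie in $D$. So a vertex $p \in P^* \subseteq D$ may well be adjacent to a vertex $q$ in some $S_l^* \subseteq A \cap D$. Such an edge $pq$ is a chord of $W'$, but it does not give you a shorter induced path from $a_i$ to $a_j$ inside $D$: the vertex $q$ is neither $a_i$ nor $a_j$, and routing through $q$ lands you in the middle of a sector on the wrong side. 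Minimality of $|P|$ among $a_i$--$a_j$ paths in $D$ therefore does nothing to exclude these chords, and your iteration does not terminate in an induced hole. The same issue arises with chords from $P^*$ to the $v$-neighbors $a_l$ lying on $A$: these also fail to shorten $P$, and yet they destroy the intended sector count of $(W',v)$.

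The $k=3$ sketch is too thin to assess: ``a careful case analysis \dots\ shows'' is exactly where all the work lies, and the three-path-configuration you hope for does not fall out automatically from a shortest path $Q$ between two sector interiors (again, $Q^*$ may attach uncontrollably to the third sector or to the $a_l$'s). For comparison, the paper does not split on the number of $v$-neighbors at all. It first proves two vertex-attachment lemmas showing that any single vertex $w \in G \setminus (N[v] \cup W)$ is $(W,v)$-local (non-stranded case) or has $N_W(w)=\{b^-,b,b^+\}$ (stranded case), and then upgrades these to statements about paths and components (Lemmas on stranded and non-stranded wheels). Cube-freeness is used precisely inside the non-stranded vertex lemma to kill a residual symmetric configuration. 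The theorem then follows in two lines from the stranded/non-stranded dichotomy. If you want to salvage a direct argument, you will need an analogue of those attachment lemmas to tame chords between $P^*$ and $A$; without them the rerouting strategy does not close.
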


Theorem~\ref{starcutset} is used later to prove Theorem~\ref{nohub}, which is a key step in our proof. It may aslo be of independent interest in the study of graphs in $\mathcal{C}$.
We prove Theorems~\ref{contain_cube} and~\ref{starcutset} in the upcoming two subsections. Let us conclude with a theorem concerning degeneracy. Recall that for an integer $\delta > 0$, a  graph $G$ is {\em $\delta$-degenerate} if every
  subgraph of $G$ contains a vertex of degree less than $\delta$. The following is an easy consequence of the main theorem of \cite{KuhnOsthus}:
\begin{theorem}\label{degenerate}
  For every $t\geq 1$, there exists $\delta_t > 0$ such that every (theta, $K_t$)-free graph is $\delta_t$-degenerate.
\end{theorem}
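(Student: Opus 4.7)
The plan is to derive the result from the Kühn--Osthus theorem via a short Ramsey argument. The relevant result of \cite{KuhnOsthus} is: for every graph $H$ and every integer $s \geq 2$, there exists $d = d(H, s)$ such that every graph of average degree at least $d$ either contains $K_{s,s}$ as a subgraph or contains an induced subdivision of $H$. I will apply this with $H = K_{2,3}$.

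The key observation is that every induced subdivision of $K_{2,3}$ is itself a theta. Indeed, the two non-adjacent vertices of degree three in $K_{2,3}$ remain non-adjacent after subdivision, and are joined by three internally disjoint paths, each of length at least two (every edge of $K_{2,3}$ is incident to one of these two vertices, so after subdivision each of the three paths contains at least one internal vertex). The ``induced'' condition further forces the interiors of these three paths to be pairwise anticomplete, matching precisely the definition of a theta.

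Next, a standard Ramsey argument shows that for $s_t := R(t, 3)$, every (theta, $K_t$)-free graph $G$ is $K_{s_t, s_t}$-subgraph-free. The cases $t \leq 2$ are trivial, so assume $t \geq 3$. Suppose for contradiction that $G$ contains $K_{s_t, s_t}$ as a (not necessarily induced) subgraph with bipartition $(A, B)$. Since $G$ is $K_t$-free, both $G[A]$ and $G[B]$ are $K_t$-free graphs on $s_t = R(t, 3)$ vertices, so by the definition of the Ramsey number, each contains an independent set of size three. Taking two vertices of an independent set in $G[A]$ together with three vertices of an independent set in $G[B]$ yields an induced $K_{2,3}$ in $G$, which is a theta, contradicting the assumption.

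Combining these two observations with the Kühn--Osthus theorem, setting $\delta_t := d(K_{2,3}, R(t,3))$, every (theta, $K_t$)-free graph has average degree strictly less than $\delta_t$. Since (theta, $K_t$)-freeness is preserved under taking induced subgraphs, the same bound applies to every induced subgraph of $G$, and therefore every such induced subgraph contains a vertex of degree less than $\delta_t$. This says exactly that $G$ is $\delta_t$-degenerate, completing the proof. The only non-routine ingredient is the Ramsey step that bridges the ``subgraph'' hypothesis of Kühn--Osthus and the ``induced'' structure of theta-freeness; everything else is immediate from the quoted results.
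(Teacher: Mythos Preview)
Your proof is correct and follows essentially the same route the paper sketches: both arguments invoke the K\"uhn--Osthus theorem together with the observation that an induced subdivision of $K_{2,3}$ is a theta. The paper's justification is a single sentence and leaves the exclusion of $K_{s,s}$ subgraphs implicit; your Ramsey step with $s=R(t,3)$ makes this explicit and is exactly what is needed. (One minor quibble: your sentence ``every edge of $K_{2,3}$ is incident to one of these two vertices, so after subdivision each of the three paths contains at least one internal vertex'' is not quite the right justification---the paths already have internal vertices before any subdivision, namely the three degree-two vertices of $K_{2,3}$---but the conclusion is of course correct.)
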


To deduce Theorem~\ref{degenerate} from \cite{KuhnOsthus} observe that
theta-free graphs do not contain subdivisions of the complete bipartite graph
$K_{2,3}$ as induced subgraphs.

\subsection{Cube attachments}
Here we prove Theorem~\ref{contain_cube}.
\begin{proof}[Proof of Theorem~\ref{contain_cube}]
Suppose not. Let $S$ be the largest subset of $V(G)$ admitting a cube partition. Since $G$ contains a cube, say $Q$, it follows that $V(Q)$ admits a cube partition with $V_1 = V(Q)$ and $V_2 = \emptyset$. This shows that $S$ exists. Since $G$ does not admit a cube partition, it follows that $S \neq V(G)$. Let $(V_1, V_2)$ be a cube partition of $S$. We may assume that $V_1$ admits a partition into eight non-empty cliques $A_1,\ldots,A_6,B_1,B_2$, such that
\begin{itemize}
    \item for each $i\in \{1,\ldots,6\}$, $A_i$ is complete to $A_{i+1}$ (where $A_7=A_1$); and
     \item $B_1$ is complete to $A_1\cup A_3\cup A_5$ and $B_2$ is complete to $A_2\cup A_4\cup A_6$;
\end{itemize}
and there no more edges in $V_1$.

\sta{\label{cub1} Let $w\in G\setminus S$ have two non-adjacent neighbors $x,y\in S$. Then $x$ is at distance two from $y$ in $S$.}

Suppose not. By symmetry, we may assume that $x\in B_1$ and $y\in B_2$. Since $A_2\cup A_3\cup A_5\cup A_6\cup B_1\cup B_2\cup \{w\}$ contains no theta, $w$ is complete to at least one of $A_2,A_3,A_5,A_6$, say $A_6$. Also, since $A_1\cup A_5\cup A_6\cup B_1\cup \{w\}$ contains no theta, $w$ is complete to at least one of $A_1,A_5$; by symmetry, let $w$ be complete to $A_5$. Suppose that $w$ has a non-neighbor in one of $A_1$ and $A_4$, say the former. Since $A_1\cup A_2\cup A_6\cup B_1\cup \{w\}$ contains no theta, $w$ is anticomplete to $A_2$. As a result, $w$ is complete to $A_4$, as otherwise depending on whether $w$ has a neighbor in $A_3$ or not, either $A_2\cup A_3\cup A_4\cup B_2\cup \{w\}$ or $A_2\cup A_3\cup A_4\cup B_1\cup B_2\cup \{w\}$ contains a theta, which is impossible. Consequently, since $A_2\cup A_3\cup A_4\cup B_1\cup B_2\cup \{w\}$ contains no pyramid, $w$ is complete to $A_3$. But then $A_1\cup A_2\cup A_3\cup B_1\cup B_2\cup \{w\}$ contains a pyramid, a contradiction. This proves that  $w$ is complete to both $A_1$ and $A_4$. Next suppose that $w$ has a non-neighbor in one of $A_2$ or $A_3$, say the former. Since $A_1\cup A_2\cup A_3\cup B_2\cup \{w\}$ contains no theta, $w$ is anticomplete to $A_3$. But then $A_2\cup A_3\cup A_4\cup B_1\cup B_2\cup \{w\}$ contains a pyramid, a contradiction. Therefore, $w$ is complete to both $A_2$ and $A_3$. This restores the symmetry between $B_1$ and $B_2$.
Finally, if $w$ has a non-neighbor $u\in B_1\cup V_2$, then $A_1\cup A_3\cup A_5\cup \{u,w\}$ contains a theta. So $w$ is complete to both $B_1$ and $V_2$,
and symmetrically to $B_2$. In conclusion, $w$ is complete to $V_1\cup V_2$. But then $(V_1,V_2\cup \{w\})$ is a cube partition for $S\cup \{w\}$, a contradiction with the choice of $S$. This proves \eqref{cub1}.

\sta{\label{cub2} For every $w\in G\setminus S$, $N_S(w)$ is a clique.}

Suppose not. Then by \eqref{cub1} and symmetry, we may assume that $w$ has a neighbor in $A_1$ and a neighbor in $A_3$. It follows from \eqref{cub1} that $w$ is anticomplete to $A_4$ and $A_6$. Also, since $A_3\cup A_4\cup A_5\cup B_2\cup \{w\}$ contains no theta, $w$ is anticomplete to at least one of $A_5,B_2$; by symmetry, let $w$ be anticomplete to be $A_5$. Now, since $A_1\cup \cdots \cup A_6\cup \{w\}$ contains no theta, $w$ is complete to $A_2$. In addition, since $A_1\cup A_3 \cup A_4\cup A_5\cup A_6\cup B_2\cup \{w\}$ contains no theta, $w$ is complete to $B_2$. The latter, along with \eqref{cub1}, implies that $w$ is anticomplete to $B_1$. Moreover, if $w$ has a non-neighbor in one of $A_1$ and $A_3$, say the former, then $A_1\cup A_3 \cup A_4\cup A_6\cup B_1\cup B_2\cup \{w\}$ contains a theta, which is impossible. Thus, $w$ is complete to $A_1\cup A_3$. Finally, if $w$ has a non-neighbor $u\in V_2$, then $A_1\cup A_3\cup B_2\cup \{u,w\}$ contains a theta. So $w$ is complete to $V_2$. In conclusion, $w$ is complete to $A_1,A_2,A_3,B_2$ and $V_2$ and anticomplete to the rest of $S$. But then adding $w$ to $A_2$, $(V_1\cup \{w\},V_2)$ is a cube partition for $S\cup \{w\}$, a contradiction with the choice of $S$. This proves \eqref{cub2}.

\sta{\label{cub3} For every component $J$ of $G\setminus S$, $N_S(J)$ is a clique.}

Suppose not. Then we may choose an induced path $R=r_1\mh\cdots \mh r_l$ of smallest length such that $R\subseteq G\setminus S$, and (having chosen $R$) non-adjacent vertices $x\in N_S(r_1)$ and $y\in N_S(r_l)$ such that the distance between $x$ and $y$ in $S$ is as small as possible. It follows from \eqref{cub2} that $l\geq 2$. Now, if the distance between $x$ and $y$ in $S$ is three, we may assume, without loss of generality, that $x\in B_1$ and $y\in B_2$. From the choice of $R$, $x$ and $y$, and the fact that every vertex in $V_1\setminus (B_1\cup B_2)$ is at distance two from either $x$ or $y$, it follows that  that $R^*$ is anticomplete to $V_1$, $N_S(r_1)\subseteq B_1$ and $N_S(r_l)\subseteq B_2$. But then $A_2\cup A_3\cup A_5\cup A_6\cup R\cup \{x,y\}$ contains a theta, which is impossible. Therefore, the distance between $x$ and $y$ in $S$ is two, and so due to symmetry, we may assume that $x\in A_1$ and $y\in A_3$. From the choice of $R$, it immediately follows that $R^*$ is anticomplete to $A_4\cup A_5\cup A_6\cup B_2$, $r_1$ is anticomplete to $A_4\cup A_5\cup B_2$ and $r_l$ is anticomplete to $A_5\cup A_6\cup B_2$. But then $A_1\cup A_3\cup A_4\cup A_5\cup A_6\cup B_2\cup R$ contains a theta, which is impossible. This proves \eqref{cub3}.\vsp

Since $S\neq V(G)$, $G\setminus S$ has a component $J$. By \eqref{cub3}, $N_S(J)$ is a clique. So $S\setminus N(J)\neq \emptyset$, as $S$ is not a clique. But then $N_S(J)$ is a cutset in $G$ separating $J$ from $S\setminus N(J)$, a contradiction. This concludes the proof of Theorem~\ref{contain_cube}.
\end{proof}

\subsection{Wheel attachments}
The goal of this subsection is to prove Theorem~\ref{starcutset}, which falls into several steps. First, we need a couple of definitions. Let $G$ be a graph and $(W,v)$ be a wheel in $G$. Throughout, we fix a cyclic orientation of $W$ and refer to it as \textit{clockwise}. For all $x,y\in W$, we denote by $W[x,y]$ the subpath of $W$ joining $x$ to $y$ in the clockwise orientation, with the convention that $W[x,x]=\{x\}$. Also, for every $z\in W$, let $z^-\in W$ (resp. $z^+\in W$) be the vertex which appears immediately before (resp. after) $z$ with respect to the clockwise orientation of $W$. For every vertex $w\in G\setminus (N[v]\cup W)$, we say $w$ is $(W,v)$-\textit{local} if $N_W(w)$ is contained in a sector of $(W,v)$. Similarly, a component $J$ of $G\setminus N[v]$ is said to be $(W,v)$-\textit{local} if $N_W[J]$ is contained in a sector of $(W,v)$ (note that a local component may meet $W$).

A wheel $(W,v)$ in a graph $G$ is \textit{stranded} if for some $k\geq 2$, there exists a (unique) enumeration $a_1,\ldots, a_k,b$ of the vertices in $N_W(v)$ such that the following hold.
\begin{itemize}
    \item The vertices $a_1,\ldots, a_k,b$ appear in this order with respect to the clockwise orientation of $W$.
    \item For each $i\in \{1,\ldots, k-1\}$, we have $a_{i+1}=a_i^+$.
    \item Both $W[b,a_1]$ and $W[a_k,b]$ are long sectors of $(W,v)$.
\end{itemize}
We refer to $(a_1,\ldots, a_k,b)$ as the \textit{contour} of $(W,v)$. Note that if $k=2$, then $W\cup \{v\}$ is a pyramid in $G$. Let us begin with a lemma.

\begin{lemma}\label{strand-vx}
Let $G$ be a (theta, pyramid, prism)-free graph and $(W,v)$ be an optimal wheel in $G$. Assume that $(W,v)$ is a stranded wheel  with contour $(a_1,\ldots, a_k,b)$. Then for every $w\in G\setminus (N[v]\cup W)$, either $w$ is $(W,v)$-local or $N_W(w)=\{b^-,b,b^+\}$.
\end{lemma}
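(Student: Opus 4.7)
The plan is to suppose for contradiction that $w$ is non-local and $N_W(w)\neq\{b^-,b,b^+\}$, and in every case to produce either a wheel $(W',v)$ with $|N_{W'}(v)|<|N_W(v)|$ (violating optimality) or an induced theta, pyramid, or prism (violating the hypothesis on $G$). The stranded structure is exploited throughout: $v$'s neighbors on $W$ form the consecutive block $a_1,\ldots,a_k$ together with the isolated neighbor $b$ sitting across the two long sectors $L=W[b,a_1]$ and $R=W[a_k,b]$. Since $k=2$ would force $W\cup\{v\}$ to be a pyramid, we may assume $k\ge 3$.

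If $w$ has a neighbor $a_j$ with $j\in\{2,\ldots,k-1\}$, then $w$ being non-local supplies a second $W$-neighbor $u$. I would construct a hole $C$ from $a_j$ to $u$ going around $W$ through $L$ and $b$ (or through $R$ and $b$, as appropriate), closing up with the shortcut $a_j-w-u$; a direct count shows $v$ has at most $j+1\le k$ neighbors on $C$, and the long sectors through $L$ and through $w$ guarantee that $(C,v)$ is a wheel, contradicting optimality. In the delicate sub-case when $u$ is adjacent to $a_j$ on $W$ (so that $\{w,a_j,u\}$ is a triangle), the count degenerates, but then a pyramid with this triangle as base and apex equal to a further $W$-neighbor of $w$ (forced to exist by non-locality) provides the contradiction instead.

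The remaining case is that $w$'s $a$-neighbors are contained in $\{a_1,a_k\}$. If $w\sim a_1$ and $w\sim a_k$, the hole $w-a_1-L-b-R-a_k-w$ is a wheel with three $v$-neighbors and three long sectors, again violating optimality. If $w\sim a_1$ only, non-locality forces a neighbor $u\in\{b\}\cup R^*$, and the three paths $L[a_1,b]$, $a_1-v-b$, and $a_1-w-\cdots-b$ form a theta between $a_1$ and $b$. If $w$ has no $a$-neighbor and $w\not\sim b$, non-locality forces neighbors $l\in L^*$ and $r\in R^*$, and the three paths $l-w-r$, $l-L[l,b]-b-R[b,r]-r$, and $l-L[l,a_1]-a_1-a_2-\cdots-a_k-R[a_k,r]-r$ partition $V(W)\setminus\{l,r\}$ into internally disjoint, pairwise anticomplete arcs (anticompleteness uses that $W$ has no chords and $N_W(w)=\{l,r\}$), yielding a theta between $l$ and $r$.

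Finally, if $w$ has no $a$-neighbor and $w\sim b$, let $l,r$ be the $L^*$- and $R^*$-neighbors of $w$ nearest $a_1$ and $a_k$, respectively. If $l\neq b^+$ then either $w\sim b^-$, giving a pyramid with apex $v$, triangle $\{b,b^-,w\}$, and paths $v-b$, $v-a_k-R[a_k,b^-]-b^-$, $v-a_1-L[a_1,l]-l-w$; or $w\not\sim b^-$, in which case $v-b-w$, $v-a_1-L[a_1,l]-l-w$, and $v-a_k-R[a_k,r]-r-w$ form a theta between $v$ and $w$, since the conditions $l\neq b^+$ and $r\neq b^-$ are precisely what keep $b^\pm$ out of the non-$b$ path interiors. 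By symmetry, $r\neq b^-$ also leads to a contradiction, so $l=b^+$ and $r=b^-$; any further neighbor of $w$ in $L^*\cup R^*$ would revive one of the earlier cases, forcing $N_W(w)=\{b^-,b,b^+\}$ as claimed. The chief obstacle is this last case: no single three-path configuration (for instance the $v$-to-$w$ theta) works uniformly because the edges $b\sim b^\pm$ can destroy anticompleteness; the workaround is to keep at hand a palette of alternatives (the $l$-to-$r$ theta across $W$, the $a_1$-to-$b$ theta on one side, the pyramids with triangle $\{b,b^\pm,w\}$, and occasionally pyramids with a more remote apex when $w$ forms a triangle with two consecutive $W$-vertices) and match the right construction to each sub-case.
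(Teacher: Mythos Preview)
Your strategy---case split on the $a_i$-neighbours of $w$, then exhibit a smaller wheel or a forbidden configuration---is the same as the paper's, but several constructions are not shown to be induced because you do not control all of $N_W(w)$. You select one or two neighbours of $w$ on $W$ and build a cycle, theta, or pyramid through them, yet nothing rules out further neighbours of $w$ that create chords. For instance: in ``no $a$-neighbour, $w\not\sim b$'' you explicitly invoke $N_W(w)=\{l,r\}$ for anticompleteness, but $|N_W(w)|=2$ has not been established; in ``$w\sim a_1$ only'' your theta between $a_1$ and $b$ uses $L$ as one path and a path through $w$ as another, so any neighbour of $w$ in $L^*$ destroys it; in ``$w\sim a_1$ and $w\sim a_k$'' the proposed hole through $L\cup\{b\}\cup R$ has a chord whenever $w$ has a neighbour in $L^*\cup R^*\cup\{b\}$; and in the final pyramid with triangle $\{b,b^-,w\}$, the path from $v$ through $a_k$ and $R$ to $b^-$ meets $w$ whenever $w$ has an $R^*$-neighbour strictly between $a_k$ and $b^-$ (which is consistent with $w\sim b^-$).

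The paper avoids this by arguing in a different order and always choosing \emph{extremal} neighbours (first or last along a specified arc), which is exactly what makes each shortcut or configuration induced. It first shows $w$ has a neighbour off $\{a_1,\ldots,a_k\}$ (else the extremal $a_i,a_j\in N(w)$ give a clean shorter wheel), then that $w$ has neighbours in both open long sectors, then that $w\sim b$, and finally that the neighbours of $w$ closest to $b$ on each side are $b^+$ and $b^-$ with nothing further. Your case split can be salvaged, but every branch needs an extremal choice of neighbour and an explicit verification that no stray neighbour of $w$ lies inside the intended configuration.
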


\begin{proof}
Suppose for a contradiction that $w$ is not $(W,v)$-local and $N_W(w)\neq \{b^-,b,b^+\}$. In this proof, by $N(w)$ we mean $N_W(w)$. Note that since $G$ has no pyramid, we have $k\geq 3$.

\sta{\label{str1} The vertex $w$ has a neighbor in $W\setminus \{a_1,\ldots, a_k\}$.}

Suppose not. Then since $w$ is not $(W,v)$-local, it has a least two neighbors in $\{a_1,\ldots, a_k\}$. Choose $i\in \{1,\ldots, k\}$ minimum such that $a_i\in N(w)$, and $j\in \{1,\ldots, k\}$ maximum such that $a_j\in N(w)$. Again, since $w$ is not $(W,v)$-local, we have $j\geq i+2$. But then $W'=(W\setminus W[a_{i+1},a_{j-1}])\cup \{w\}$ is a hole in $G$, and $(W',v)$ is a wheel in $G$ with $|N_{W'}(v)|< |N_W(v)|$, a contradiction with the optimality of $(W,v)$. This proves \eqref{str1}.

\sta{\label{str2} Neither $N(w)\cap W[b^+,a_1^-]$ nor $N(w)\cap W[a_k^+,b^-]$ is empty.}

For otherwise by symmetry we may assume that $N(w)\cap W[b^+,a_1^-]=\emptyset$. If $w$ has no neighbor in $\{a_1,\ldots, a_{k-1}\}$, then $w$ is $(W,v)$-local, a contradiction. So we may choose $i\in \{1,\ldots, k-1\}$ minimum such that $a_i\in N(w)$. Also, by \eqref{str1}, $w$ has a neighbor in $W[a_k^+,b]$. Traversing the path $W[a_k^+,b]$ from $a_k^+$ to $b$, let $z$ be the last vertex in $N(w)\cap W[a_k^+,b]$. Now, if $i=1$ (resp. $i=2$), then $W[z,a_i]\cup \{v,w\}$ is a theta (resp. pyramid) in $G$, which is impossible. Therefore, we have $i\geq 3$, and so $k\geq 4$. But then $W'=(W\setminus W[a_{i+1},z^-])\cup \{w\}$ is a hole in $G$, and $(W',v)$ is a wheel in $G$ with $|N(v)\cap W'|< |N(v)\cap W|$, a contradiction with the optimality of $(W,v)$. This proves \eqref{str2}.

\sta{\label{str3} We have $b\in N(w)$.}

Suppose not. By \eqref{str2}, neither $N(w)\cap W[b^+,a_1^-]$ nor $N(w)\cap W[a_k^+,b^-]$ is empty. Traversing the path $W[b^+,a_1^-]$ from $b$ to $a_1$, let $x$ be the first vertex in $N(w)\cap W[b^+,a_1^-]$. Also, traversing the path $W[a_k^+,b^-]$ from $a_k^+$ to $b^-$, let $z$ be the last vertex in $N(w)\cap W[a_k^+,b]$. If $N(w)\cap W=\{x,z\}$, then $W\cup \{w\}$ is a theta in $G$, which is impossible. Also, if $w$ is adjacent to $a_i$ for some $i\in \{2,\ldots, k-1\}$, then $W[z,x]\cup \{a_i,v,w\}$ is a theta in $G$, a contradiction. Therefore, $w$ has a neighbor in either $W[a_k,z^-]$ or $W[x^+,a_1]$, say the former. Traversing the path $W[a_k,z^-]$ from $a_k$ to $z^-$, let $y$ be the first vertex in $N(w)\cap W[a_k,z^-]$. Then depending on whether $y$ is adjacent to $z$ or not, $W[z,x]\cup W[a_k,y]\cup \{v,w\}$ is either a pyramid or a theta in $G$, a contradiction. This proves \eqref{str3}.\vsp

By \eqref{str2}, neither $N(w)\cap W[b^+,a_1^-]$ nor $N(w)\cap W[a_k^+,b^-]$ is empty. Traversing the path $W[b^+,a_1]$ from $b^+$ to $a_1$, let $x$ be the last vertex in $N(w)\cap W[b^+,a_1]$.  Also, traversing the path $W[a_k,b^-]$ from $a_k$ to $b^-$, let $z$ be the first vertex in $N(w)\cap W[a_k,b^-]$. Due to symmetry, we may assume that $|W[b,x]|\geq |W[z,b]|$. If $|W[b,x]|\geq 3$, then depending on whether $|W[z,b]|\geq 3$ or not,  $W[x,a_1]\cup W[a_k,z]\cup \{b,v,w\}$ is either a theta or a pyramid in $G$, a contradiction. We deduce that $|W[b,x]|=|W[z,b]|=2$, from which we have $x=b^+$, $z=b^-$ and $N(w)\cap W[a_k^+,a_1^-]=\{b^-,b,b^+\}$. Furthermore, by the assumption, we have $N(w)\cap W\neq \{b^-,b,b^+\}$. So $w$ has a neighbor in $\{a_1,\ldots, a_k\}$. Now, if $w$ is adjacent to both $a_1$ and $a_k$, then $\{a_1,a_k,b,v,w\}$ is a theta in $G$, which is impossible. Therefore, we may assume, without loss of generality, that $w$ is not adjacent to $a_1$,
and so $a_i\in N(w)$ for some $i\in \{2,\ldots, k\}$. But then depending on whether $i=2$ or not, $W[b,a_1]\cup \{a_i,v,w,x\}$ is either a prism or a pyramid in $G$, a contradiction. This completes the proof of Lemma~\ref{strand-vx}.
\end{proof}

From Lemma~\ref{strand-vx}, we deduce the following.

\begin{lemma}\label{strand-general}
Let $G$ be a (theta, pyramid, prism)-free graph and $(W,v)$ be an optimal wheel in $G$. Assume that $(W,v)$ is a stranded wheel  with contour $(a_1,\ldots, a_k,b)$. Then no component of $G\setminus (N[v]\setminus \{a_2,\ldots, a_{k-1}\})$ contains $W\setminus \{a_1,a_k, b\}$.
\end{lemma}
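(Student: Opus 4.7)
The plan is to argue by contradiction. Suppose some component $D$ of $G \setminus (N[v] \setminus \{a_2, \ldots, a_{k-1}\})$ contains $W \setminus \{a_1, a_k, b\} = \{a_2, \ldots, a_{k-1}\} \cup P \cup Q$, where $P := W[b, a_1]^\ast$ and $Q := W[a_k, b]^\ast$ denote the interiors of the two long sectors. Since $D$ contains both $P$ and $\{a_2, \ldots, a_{k-1}\}$, pick a shortest path $S = s_0 \dd s_1 \dd \cdots \dd s_m$ in $D$ from $P$ to $\{a_2, \ldots, a_{k-1}\}$ with $s_0 \in P$ and $s_m = a_i$, chosen so that $i$ is maximum among $s_{m-1}$'s neighbors in $\{a_2, \ldots, a_{k-1}\}$, and $s_0$ is closest to $b$ along $P$ among $s_1$'s $P$-neighbors. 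A short case analysis using Lemma~\ref{strand-vx} and the fact that $W$ is induced yields $m \geq 3$: $m = 1$ would give a chord of $W$, and $m = 2$ would force $s_1$ to have neighbors in two distinct sectors (violating locality) or $N_W(s_1) = \{b^-, b, b^+\}$ (disjoint from $\{a_2, \ldots, a_{k-1}\}$). Applying Lemma~\ref{strand-vx} more carefully, $s_1 \in G \setminus (N[v] \cup W)$ (else a chord of $W$) and is either $(W,v)$-local in $W[b, a_1]$ or satisfies $N_W(s_1) = \{b^-, b, b^+\}$ with $s_0 = b^+$; symmetrically $s_{m-1}$ is local in a sector containing $a_i$.

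The key step is to build from $S$ a new hole $W^\ast$ for which $(W^\ast, v)$ is a wheel with $|N_{W^\ast}(v)| < |N_W(v)| = k + 1$, contrad
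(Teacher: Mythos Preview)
Your overall setup---a shortest path $S$ linking $\{a_2,\dots,a_{k-1}\}$ to the interior of a long sector, with Lemma~\ref{strand-vx} controlling the $W$-attachments of $s_1$ and $s_{m-1}$---matches the paper's, but the argument is cut off, and the announced ``key step'' of always rerouting through $S$ to obtain a wheel $(W^\ast,v)$ with $|N_{W^\ast}(v)|<k+1$ cannot succeed in every case. The obstruction is precisely the branch you flag but do not resolve, namely $N_W(s_1)=\{b^-,b,b^+\}$ (so $s_0=b^+$). Here no splice of $S$ into $W$ yields a smaller \emph{wheel}: keeping $b$ on the new cycle turns $s_1b^-$ or $s_1b$ into a chord, while dropping $b$ leaves the $v$-neighbours on the cycle inside $\{a_i,\dots,a_k\}$, whence either their number falls below three or only one long sector survives (this already happens for $k=3$, $i=2$). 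In the paper's proof this is exactly where the argument pivots: after establishing that the far end has $W$-neighbourhood $\{b^-,b,b^+\}$ and that the near end together with the interior of the path avoids $\{a_1,a_k\}$, the contradiction comes not from optimality of $(W,v)$ but from exhibiting a prism (when the minimum index hit in $\{a_2,\dots,a_{k-1}\}$ is $2$) or a pyramid (otherwise), built on the triangle $\{s_1,b,b^+\}$. Your outline must include this branch; ``build a smaller wheel'' only disposes of the sub-case where $s_1$ is genuinely local inside $W[b,a_1]$, and even there only when enough $v$-neighbours are lost.

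There is also a smaller issue created by your asymmetric choice of endpoint: because you connect to $W[b,a_1]^\ast$ only (rather than to either long sector's interior, as the paper does), nothing so far prevents some $s_j$ with $2\le j\le m-2$ from lying in, or attaching to, $Q=W[a_k,b]^\ast$; this must be handled before any candidate $W^\ast$ can be declared induced.
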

\begin{proof}
Suppose not. Note that since $G$ has no pyramid, we have $k\geq 3$. Then we may choose an induced path $P=p_1\mh\cdots\mh p_l$ in $G$ such that the following hold.

\begin{enumerate}[(P1)]
		\item\otherlabel{p1}{P1} We have $P\subseteq G\setminus (W\cup N[v])$.
    \item\otherlabel{p2}{P2} The vertex $p_1$ has a neighbor in $\{a_2,\ldots, a_{k-1}\}$ and $p_l$ has a neighbor in $W[a_k^+,a_1^-]\setminus \{b\}$.
    \item\otherlabel{p3}{P3} Subject to \eqref{p1} and \eqref{p2}, $|P|$ is as small as possible.
\end{enumerate}

It follows from Lemma~\ref{strand-vx} that $l\geq 2$, $N_W(p_1)\subseteq \{a_1,\ldots,a_k\}$ and either $p_l$ is $(W,v)$-local or $N_W(p_l)=\{b^-,b,b^+\}$. Also, we deduce:

\sta{\label{str4} $P^*$ is anticomplete to $(W\cup \{v\})\setminus \{a_1,a_k\}$.}

Suppose for a contradiction that $p_i$ has a neighbor $q\in (W\cup \{v\})\setminus \{a_1,a_k\}$ for  some $i\in \{2,\ldots, l-1\}$. Since $P \cap N[v]=\emptyset$, it follows that $q \neq v$.  Then depending on whether $q\in \{a_2,\ldots, a_{k-1}\}$ or $q\in W[a_k^+,a_1^-]$, either $p_i\mh P\mh p_l$ or $p_1\mh P\mh p_{i}$ is an induced path in $G$ with fewer vertices than $P$ and satisfying \eqref{p1} and \eqref{p2}, which violates \eqref{p3}. This proves \eqref{str4}.

\sta{\label{str5} We have $N_W(p_l)=\{b^-,b,b^+\}$.}

Suppose not. Then $p_l$ is $(W,v)$-local, and we may assume, without loss of generality, that $N_W(p_l)\subseteq W[a_k,b]$. Note that by \eqref{p2}, $p_l$ has a neighbor in $W[a_k^+,b]$. So  traversing the path $W[a_k^+,b]$ from $a_k^+$ to $b$, let $z$ be the last vertex in $N(p_l)\cap W[a_k^+,b]$. Also, note that again by \eqref{p2}, $p_1$ has a neighbor in $\{a_2,\ldots, a_{k-1}\}$, while $p_l$ is anticomplete to $\{a_1,\ldots, a_{k-1}\}$. So let $i\in \{1,\ldots, l-1\}$ be maximum such that $p_i$ has a neighbor in $\{a_1,\ldots, a_{k-1}\}$, and choose $j\in \{1,\ldots, k-1\}$ minimum such that $p_i$ is adjacent to $a_j$. Now, if $j=1$ (resp. $j=2$), then $W[z,a_j]\cup \{p_i,\ldots, p_l\}\cup \{v\}$ is a theta (resp. pyramid) in $G$, which is impossible. Therefore, we have $j\geq 3$, and so $k\geq 4$. But then $W'=(W\setminus W[a_{j+1},z^-])\cup \{p_i,\ldots,  p_l\}$ is a hole in $G$, and $(W',v)$ is a wheel in $G$ with $|N_{W'}(v)|< |N_W(v)|$, a contradiction with the optimality of $(W,v)$. This proves \eqref{str5}.

\sta{\label{str6} The vertex $p_1$ is anticomplete to $\{a_1,a_k\}$ and $P^*$ is anticomplete to $W\cup \{v\}$.}

Suppose not. Then by \eqref{str4}, some vertex in $P\setminus \{p_l\}$ has a neighbor in $\{a_1,a_k\}$, while $P^*$ is anticomplete to $(W\cup \{v\})\setminus \{a_1,a_k\}$. Let us choose $i\in \{1,\ldots,l-1\}$ maximum such that $p_i$ is adjacent to either $a_1$ or $a_k$, say the latter. Also, by \eqref{str5}, we have $N(p_l)\cap W=\{b^-,b,b^+\}$. But then $W[a_k,b]\cup \{p_i,\ldots, p_l\}\cup \{a_k,b^-,b,v\}$ is a pyramid in $G$, which is impossible. This proves \eqref{str6}.\vsp

Now, choose $i\in \{2,\ldots,k-1\}$ minimum such that $p_1$ is adjacent to $a_i$. Then depending on whether $i=2$ or not, $P\cup W[b,a_1]\cup  \{a_i,v\}$ is either a prism or a pyramid in $G$, which is impossible. This concludes the proof of Lemma~\ref{strand-general}.
\end{proof}

\begin{lemma}\label{non-str-vx}
Let $G\in \mathcal{C}^*$ and $(W,v)$ be an optimal wheel in $G$ which is not stranded. Then for every $w\in G\setminus (N[v]\cup W)$, $w$ is $(W,v)$-local.
\end{lemma}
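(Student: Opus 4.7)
The plan is to mimic the proof of Lemma~\ref{strand-vx} but strengthen its conclusion from ``$w$ is $(W,v)$-local or $N_W(w)=\{b^-,b,b^+\}$'' to simply ``$w$ is $(W,v)$-local'', exploiting the non-stranded hypothesis together with the extra substructures forbidden in $\mathcal{C}^*$: generalized prism and cube, in addition to theta and pyramid. Suppose for a contradiction that some $w\in G\setminus(N[v]\cup W)$ is not $(W,v)$-local, and write $N=N_W(w)$.

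First I would establish the analogue of claim \eqref{str1} from Lemma~\ref{strand-vx}: $N\not\subseteq N_W(v)$, i.e., $w$ has a neighbor strictly inside some sector. If instead $N\subseteq N_W(v)$, non-locality provides two neighbors $c,c'\in N$ lying in distinct sectors. Choosing $c,c'$ consecutive in $N$ along the clockwise order of $W$, I would re-route the corresponding arc through $w$ to form a hole $W'$ with $|N_{W'}(v)|<|N_W(v)|$. If $(W',v)$ remains a wheel, this contradicts the optimality of $(W,v)$; otherwise the degenerate configurations yield a theta, pyramid, or generalized prism within $W\cup\{v,w\}$, each forbidden in $\mathcal{C}^*$.

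Next, assuming $w$ has a neighbor in some sector interior, I would run analogues of \eqref{str2}--\eqref{str3} to restrict where $N$ meets $W$. Here the non-stranded assumption is essential: because $(W,v)$ has either $\ge 3$ long sectors, or exactly two long sectors not sharing an endpoint, there is always a ``spare'' long sector available for routing a third internally disjoint path between the extreme elements of $N$. Consequently, whenever $N$ meets two distinct sectors, we either exhibit a theta, pyramid, or prism through $v$, $w$, and $W$, or else construct a smaller wheel $(W',v)$ by bypassing a portion of $W$ via $w$, again contradicting optimality.

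Finally, the exceptional stranded-case configuration $N=\{b^-,b,b^+\}$ must be ruled out. In a stranded wheel the distinguished vertex $b$, flanked by two long sectors, made that pattern possible. For a non-stranded $(W,v)$ no single vertex plays this role: for every $z\in N_W(v)$ either one of the two sectors incident to $z$ is short, or there is another long sector elsewhere. Non-locality then forces $w$ to have a further neighbor outside $\{z^-,z,z^+\}$; combining this extra neighbor with the three consecutive attachments, a suitable long sector, and $v$ produces a prism, pinched prism, or cube, all excluded by $\mathcal{C}^*$. The principal obstacle is the case analysis: the non-stranded regime splits into two sub-cases ($\ge 3$ long sectors, or exactly two non-adjacent long sectors), and within each $N$ can interact with the sector partition in many ways. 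Cube-freeness and pinched-prism-freeness, both unused in Lemma~\ref{strand-vx}, are the key new tools that let every non-local attachment be converted into an optimality contradiction or a forbidden induced subgraph.
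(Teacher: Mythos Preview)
Your plan has a genuine gap: the idea of ``mimicking Lemma~\ref{strand-vx}'' does not transfer, because the claims \eqref{str2}--\eqref{str3} there are organised around the distinguished isolated neighbour $b$, and in the non-stranded case no vertex plays that role. Your substitute --- ``there is always a spare long sector for a third internally disjoint path'' --- is false exactly in the sub-case you flag: when $(W,v)$ has precisely two long sectors not sharing an endpoint and $N_W(w)$ straddles both of them, there is no third long sector to route through, so the promised theta/pyramid/prism does not materialise and the optimality re-routing need not produce a wheel. Your final paragraph (ruling out a $\{z^-,z,z^+\}$ attachment) is also off target: that pattern is an artefact of the stranded structure and never emerges from a correct non-stranded analysis, so there is nothing to rule out --- but your outline gives no argument that the analysis actually terminates before reaching it.

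The paper's proof is organised around a structural claim you are missing entirely: if $w$ has a neighbour in the interior of a long sector $W[r,s]$, then \emph{every} neighbour of $w$ in $W$ lies in $W[r^-,s^+]$ (claim~\eqref{nstr-vx4}). This is proved by routing a path from $v$ to $w$ through $W\setminus W[r^-,s^+]$ and reading off a theta or pyramid. Once all of $N_W(w)$ is concentrated around one long sector, a short further argument forces $w$ to be adjacent to both $r^-$ and $s^+$ (claim~\eqref{nstr-vx5}), then forces $v$ to be complete to $\{r^-,s^+\}$ --- this last step, claim~\eqref{nstr-vx6}, is the \emph{only} place cube-freeness is invoked, and it is used very specifically (the failure case assembles an actual cube on eight vertices), not as a generic case-killer. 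The contradiction then comes from optimality: re-routing through $w$ drops $|N_{W'}(v)|$, so $N_{W'}(v)$ must be a path, which forces $N_W(v)$ to be a path as well, contradicting that $(W,v)$ is a wheel. You should restructure your argument around \eqref{nstr-vx4} rather than around analogues of \eqref{str2}--\eqref{str3}.
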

\begin{proof}
Suppose not. We deduce:

\sta{\label{nstr-vx1}We have $|N_W(w)|\geq 3$.}

For otherwise either $w$ is $(W,v)$-local or $W\cup \{w\}$ is a theta in $G$. This proves \eqref{nstr-vx1}.

\sta{\label{nstr-vx2} The vertex $w$ has a neighbor in $W\setminus N(v)$.}

Suppose not. If $N_W(w)$ contains a stable set $S$ of size three, then $S\cup \{v,w\}$ is a theta in $G$, which is impossible. So $N_W(w)$ has no stable set of size three. Now, if $N_W(w)$ is not connected, then by \eqref{nstr-vx1}, $N_W(w)$ is either an edge plus an isolated vertex, or an induced two-edge matching. In the former case, $W\cup \{w\}$ is a pyramid, and in the latter case, $W\cup \{w\}$ is a pinched prism, violating the fact that $G\in \mathcal{C^*}$. So $N_W(w)$ is connected, and so since $w$ is not $(W,v)$-local, $N_W(w)$ is a path on three or four vertices, say $w_1\mh \cdots \mh w_l$ for $l\in \{3,4\}$. But then $W'=(W\setminus \{w_2,\ldots, w_{l-1}\})\cup \{w\}$ is hole in $G$, and $(W',v)$ is a wheel in $G$ with $|N_{W'}(w)|< |N_W(w)|$, a contradiction with the optimality of $(W,v)$. This proves \eqref{nstr-vx2}.

\sta{\label{nstr-vx3}There do not exist two consecutive sectors of $(W,v)$ whose union contains $N_W(w)$.}

For otherwise there are two consecutive sectors of $(W,v)$, say $W[a,b]$ and $W[b,c]$, where $N_W(w)\subseteq W[a,c]$. Traversing $W[a,c]$ from $a$ to $c$, let $x$ be first vertex in $N(w)\cap W[a,c]$ and let $z$ be last vertex in $N(w)\cap W[a,c]$. Note that since $w$ is not $(W,v)$-local, we have $x\in W[a,b^-]$ and $z\in W[b^+,c]$. Also, since $(W,v)$ is not stranded, $W[c,a]$ contains a long sector of $(W,v)$. But then $W'=(W\setminus W[x^+,z^-])\cup \{w\}$ is a hole in $G$, and $(W',v)$ is wheel in $G$ with $|N_{W'}(v)|< |N_W(v)|$, a contradiction with the optimality of $(W,v)$. This proves \eqref{nstr-vx3}.

\sta{\label{nstr-vx4} Let $W[r,s]$ be a long sector of $(W,v)$ such that $w$ has a neighbor in the interior of $W[r,s]$. Then $w$ has no neighbor in $W\setminus W[r^-,s^+]$.}

Suppose not. Note that $v$ has a neighbor in $W\setminus W[r^-,s^+]$, as otherwise $W\cup \{v\}$ is a pyramid, a theta  or a pinched prism, which violates the fact that $G\in \mathcal{C}^*$. Therefore, $(W\setminus W[r^-,s^+])\cup \{v,w\}$ is connected, and we may choose an induced path $L$ from $v$ to $w$ contained in $(W\setminus W[r^-,s^+])\cup \{v,w\}$. Also, traversing $W[r,s]$ from $r$ to $s$, let $y$ be the first vertex in $N(w)\cap W[r,s]$ and $z$ be the last vertex in $N(w)\cap W[r,s]$. Note that since $w$ has a neighbor in $W[r^+,s^-]$, either $y$
 and $z$ are distinct or $y=z\in W[r^+,s^-]$.  Now, depending on whether $y$ is adjacent to $z$ or not, $W[r,y]\cup W[z,s]\cup L$ is a either pyramid or a theta in $G$, which is impossible. This proves \eqref{nstr-vx4}.\vsp 

Note that \eqref{nstr-vx2}, \eqref{nstr-vx2}\eqref{nstr-vx3} and \eqref{nstr-vx4} together with the fact that $(W,v)$ is a wheel immediately imply the following.

\sta{\label{nstr-vx5} Let $W[r,s]$ be a long sector of $(W,v)$ such that $w$ has a neighbor in the interior of $W[r,s]$. Then $r^-$ and $s^+$ are distinct, $w$ is adjacent to both $r^-$ and $s^+$, and $w$ has no neighbor in $W\setminus W[r^-,s^+]$.}

Now, by \eqref{nstr-vx2}, we may choose $a,b\in W$ such that $W[a,b]$ is a long sector of $(W,v)$ and $w$ has a neighbor in $W[a^+,b^-]$. Then by \eqref{nstr-vx5} applied to the sector $W[a,b]$, $a^-$ and $b^+$ are distinct, $w$ is adjacent to both $a^-$ and $b^+$, and $w$ has no neighbor in $W\setminus W[a^-,b^+]$.
Next we prove:

\sta{\label{nstr-vx6} $v$ is complete to $\{a^-,b^+\}$.}

  Suppose that $v$ is non-adjacent to $a^-$. Let $c \in W$ be such that
  $W[c,a]$ is a sector of $(W,v)$. Then $c \neq b$, for otherwise we get a theta. By \eqref{nstr-vx5}
  applied to $W[c,a]$
  we deduce that $w$ is complete to $\{a^+,c^-\}$ and
  has no other neighbor in $W[a^+,c^-]$. Since $w$ is adjacent to $b^+$,
  it follows that $c^-=b^+$, and $W[b,c]$ is a sector of $(W,v)$.
  Now it follows by symmetry that $a^+=b^-$ and $c^+=b^-$, and that
  $N_W(v) = \{a,b,c\}$ and $N_W(w) = W \setminus N(v)$.
  But now $W\cup \{v,w\}$ is a cube in $G$, which is impossible. This proves \eqref{nstr-vx6}.\vsp

  Let $W'=(W \setminus W[a,b])\cup \{w\}$. Then $W'$ is a hole and
  $|N_{W'}(v)| < |N_{W}(v)|$. By the optimality of $(W,v)$ we deduce that
  the $N_{W'}(v)$ forms a path in  $W'$. Since by \eqref{nstr-vx6}
  $v$ is complete to $\{a^-,b^+\}$, it follows that $v$ is complete to
  $W \setminus W[a,b]$. But then $N_W(v)$ forms a  path in $W$, contrary
  to the fact that $(W,v)$ is a wheel.
  This completes the proof of Lemma~\ref{non-str-vx}.
\end{proof}

\begin{lemma}\label{nstr-general}
Let $G\in \mathcal{C}^*$ and $(W,v)$ be an optimal wheel in $G$ which is not stranded. Then every component of $G\setminus N[v]$ is $(W,v)$-local.
\end{lemma}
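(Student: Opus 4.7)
I would argue by contradiction: suppose some component $J$ of $G\setminus N[v]$ is not $(W,v)$-local, so $N_W[J]$ is not contained in any single sector of $(W,v)$.

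For any $w\in J$, Lemma~\ref{non-str-vx} (if $w\notin W$) together with the observation that $J\cap W\subseteq W\setminus N(v)$ lies in sector interiors (if $w\in W$) ensures that $N_W[w]$ is contained in a sector of $(W,v)$. So the non-locality of $J$ must come from the way its vertices are joined inside $G[J]$. Following the template of Lemma~\ref{strand-general}, I would pick an induced path $P=p_1\mh\cdots\mh p_l$ in $G[J]$ and vertices $x\in N_W[p_1]$, $y\in N_W[p_l]$ such that $\{x,y\}$ lies in no common sector of $(W,v)$, with $l$ minimum. The preceding observation forces $l\geq 2$.

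The core combinatorial step is to leverage the minimality of $l$ to pin down $N_W[p_i]$ for $i\in\{2,\ldots,l-1\}$: for any $z\in N_W[p_i]$, the shorter subpaths $p_1\mh\cdots\mh p_i$ and $p_i\mh\cdots\mh p_l$ would themselves witness non-locality unless both $\{x,z\}$ and $\{y,z\}$ lie in common sectors. Arranging (as one can) that $x$ and $y$ lie in the interiors of two distinct sectors $S_x$ and $S_y$, this squeezes $N_W[p_i]\subseteq S_x\cap S_y$, which is either empty or consists of a single common endpoint $c\in N_W(v)$. In particular $p_i\notin W$, so $P^*\cap W=\emptyset$ and $P^*$ is anticomplete to $W\setminus\{c\}$; and applying Lemma~\ref{non-str-vx} directly to $p_1$ and $p_l$ further gives $N_W[p_1]\subseteq S_x$ and $N_W[p_l]\subseteq S_y$.

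With this structure in hand, the final step is to exhibit a forbidden induced subgraph. Choosing $x$ and $y$ to be extremal $W$-neighbors of $p_1$ and $p_l$ on appropriate sides of $S_x$ and $S_y$, the two routes from $v$ around $S_y$ to $y$, together with the route $v$--endpoint of $S_x$--$\cdots$--$x$--$p_1$--$P$--$p_l$--$y$, give three internally disjoint paths from $v$ to $y$ of length at least two, with $v,y$ non-adjacent. After checking for stray adjacencies, the resulting configuration is a theta, pyramid or prism, contradicting $G\in\mathcal{C}^*$. The principal obstacle is the case analysis at this stage: the possibility that $S_x$ and $S_y$ share an endpoint; the degenerate case where all $W$-neighbors of $p_1$ or $p_l$ lie in $N_W(v)$ (so $x$ or $y$ must be a sector endpoint); and the case $p_1,p_l\in W$. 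In several of these subcases one can instead assemble a hole $W'$ involving $P$ with $|N_{W'}(v)|<|N_W(v)|$, contradicting the optimality of $(W,v)$, while any remaining configurations yield a cube or a pinched prism, both forbidden in $\mathcal{C}^*$.
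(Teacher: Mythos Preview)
Your overall strategy---reduce to a shortest path $P$ in a non-local component and then exhibit a forbidden three-path-configuration---matches the paper's. But your minimality condition is weaker than what the paper uses, and this is where the argument, as written, does not close.

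The paper does not minimise $|P|$ alone: it minimises $|W|+|Q|$ (condition (Q3)), and subject to that, further minimises the size of the smaller arc of $W\setminus(S^*\cup S'^*)$ (condition (Q4)). These two extra degrees of freedom are exactly what drive the structural claims you need but do not prove. First, allowing $W$ to shrink is what forces $N_W(q_1)$ and $N_W(q_l)$ to be cliques (the paper's \eqref{nstr-g3}): if $q_1$ has two non-adjacent neighbours in its sector, one replaces the subpath of $W$ between them by $q_1$, obtaining an optimal wheel $(W',v)$ of the same hub-degree but with $|W'|<|W|$, and then $q_2\text{-}\cdots\text{-}q_l$ is a strictly shorter witness for $(W',v)$. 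With only $|P|$ minimised you have no leverage here, and without the clique property your final theta is broken by the extra edges from $p_l$ (or $p_1$) into the two ``routes around $S_y$''. Second, the secondary minimisation (Q4) is what lets the paper rule out the adjacent-sector case (your residual vertex $c\in S_x\cap S_y$) and upgrade ``$Q^*$ anticomplete to $W\setminus\{c\}$'' to full anticompleteness (the paper's \eqref{nstr-g2} and \eqref{nstr-g4}). Your promised case analysis would have to reproduce all of this work, including the situations you flag (shared sector endpoint; $N_W(p_1)\subseteq N_W(v)$; $p_1,p_l\in W$), and as stated it is a promissory note rather than an argument.

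Concretely: the sentence ``Arranging (as one can) that $x$ and $y$ lie in the interiors of two distinct sectors'' is not justified and can fail (e.g.\ $N_W(p_1)$ may be a single vertex of $N_W(v)$), and your three-path configuration from $v$ to $y$ presupposes that $p_l$ has no further neighbours on $S_y$. Strengthening your minimisation to range over pairs $(W,P)$, as the paper does, resolves both issues cleanly and collapses the endgame to the single line ``$W\cup Q$ is a theta, pyramid or prism''.
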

\begin{proof}
Suppose for a contradiction that some component $J$ of $G\setminus N[v]$ is not $(W,v)$-local. In other words, there are two distinct vertices $u_1,u_2\in N_W(v)$ such that neither $N[J]\cap W[u_1^+,u_2^-]$ nor $N[J]\cap W[u_2^+,u_1^-]$ is empty. From this and Lemma~\ref{non-str-vx}, it is easily observed that, for fixed $v$, the hole $W$ in the non-stranded wheel $(W,v)$ together with an induced path $Q=q_1\mh\cdots\mh q_l$ in $G$ can be chosen such that the following hold.
\begin{enumerate}[(Q1)]
\item\otherlabel{q1}{Q1}We have $l\geq 2$ and $Q\subseteq G\setminus (W\cup N[v])$.
		\item\otherlabel{q2}{Q2} Both $q_1$ and $q_l$ have neighbors in $W$, and there are two distinct sectors $S$ and $S'$ of $(W,v)$ such that $N_W(q_1)\subseteq S$ and $N_W(q_l)\subseteq S'$. Also, no sector of $(W,v)$ contains $N_W(q_1)\cup N_W(q_l)$.
			\item\otherlabel{q3}{Q3} Subject to \eqref{q1} and \eqref{q2}, $|W|+|Q|$ is as small as possible.
		\item\otherlabel{q4}{Q4} Subject to \eqref{q1}, \eqref{q2} and \eqref{q3}, a component of  $W\setminus (S^*\cup S'^*)$ with the smallest number of vertices has as few vertices as possible.
\end{enumerate}
We deduce:

\sta{\label{nstr-g1} If $S\cap S'\neq \emptyset$, then some vertex in $Q^*$ has a neighbor in $W\setminus (S\cup S')$.}

Suppose not. Then $Q$ is anticomplete to $W\setminus (S\cup S')$. Traversing $S\cup S'$ in the clockwise orientation of $W$, let $y$ be the first vertex with a neighbor in $Q$ and $z$ be the last vertex with a neighbor in $Q$. Note that by \eqref{q2}, both $y$ and $z$ exist, one of them belongs to  $S\setminus S'$ and the other one lies in $S'\setminus S$. Then $Q\cup \{y,z\}$ is connected, and hence contains an induced path $R$ from $y$ to $z$. Also, since $(W,v)$ is not stranded, $W\setminus (S\cup S')^*$ contains a long sector of $(W,v)$. Now if $N(v)\setminus (S\cup S')=\emptyset$, then $W[z,y]\cup R \cup  \{v\}$ is a theta in $G$, which is impossible. So $v$ has a neighbor in $W\setminus (S\cup S')$. But then $W'=(W\setminus W[y,z])\cup R$ is a hole in $G$, and $(W',v)$ is a wheel in $G$ with $|N_{W'}(v)|< |N_W(v)|$, a contradiction with the optimality of $(W,v)$. This proves \eqref{nstr-g1}.

\sta{\label{nstr-g2} We have $S\cap S'=\emptyset$.}

Suppose not. Assume, without loss of generality, that $S=W[s,c]$ and $S'=W[c,s']$ for some $s,c,s'\in W$, appearing in this order with respect to the clockwise orientation of $W$. Then by \eqref{nstr-g1}, some vertex $q_i\in Q^*$ has a neighbor in $W\setminus (S\cup S')$. Now, if there exist $q_j\in Q$ with a neighbor in $(S\cup S')^*$ (in which case by Lemma~\ref{non-str-vx}, we have $i\neq j$), then assuming $Q'$ to be the subpath of $Q$ joining $q_i$ to $q_j$, $(W,v)$ and $Q'$ satisfy both \eqref{q1} and \eqref{q2}, yet $|Q'|<|Q|$, a contradiction with \eqref{q3}. So $Q$ is anticomplete to $(S\cup S')^*$. As a result, we have $N(q_1)\cap W=\{s\}$ and $N(q_l)\cap W=\{s'\}$.  It follows from \eqref{q2} that $W\setminus (S\cup S')^*$ is not a sector of $(W,v)$, and so $v$ has a neighbor in $W\setminus (S\cup S')$. Now, assume that there are two distinct vertices $x,y\in W\setminus (S\cup S')$ such that $x\in N_W(v)$ and $y\in N_W(Q^*)$, say $y\in W[s',x]$. Then defining $Q'$ to be the subpath of $Q$ from $q_1$ to $q_i$, $(W,v)$ and $Q'$ satisfy both \eqref{q1} and \eqref{q2}, yet $|Q'|<|Q|$, a contradiction with \eqref{q3}. Consequently, there exists a vertex $z\in W\setminus (S\cup S')$ such that $N_{W\setminus (S\cup S')}(v)=N_W(Q^*)=\{z\}$. Now, if $W[z,s]$ is a long sector of $(W,v)$, then assuming $R$ to be an induced path in $Q\cup \{s,z\}$ from $s$ to $z$, $R\cup W[z,s]\cup \{v\}$ is a theta in $G$, which is impossible. So we have $|W[z,s]|=2$, and similarly $|W[s',z]|=2$. But then $(W,v)$ is stranded, a contradiction. This proves \eqref{nstr-g2}.

\sta{\label{nstr-g3} Both $N_W(q_1)$ and $N_W(q_l)$ are cliques of $G$, and so $|N_W(q_1)|,|N_W(q_l)|\in \{1,2\}$.}

Suppose for a contradiction, and by symmetry that $q_1$ has two non-adjacent neighbors in $W$, which by \eqref{q2} belong to $S$. Therefore, traversing $S$ in the clockwise orientation of $W$, let $y$ and $z$ respectively be the first and the last vertex in  $N_S(q_1)=N_W(q_1)$. Note that $W[y^+,z^-]\neq\emptyset$. Now $W'=(W\setminus W[y^+,z^-])\cup \{q_1\}$ is a hole in $G$ and so $(W',v)$ is an optimal wheel in $G$, as $|N_{W'}(v)|=|N_W(v)|$. Also, $(W,v)$ and $(W',v)$ have the same number of long sectors. Thus, by Lemma~\ref{non-str-vx}, $q_l$ is $(W',v)$-local, and so $l\geq 3$. But then defining $Q'$ to be the subpath of $Q$ from $q_2$ to $q_l$, $(W',v)$ and $Q'$ satisfy both \eqref{q1} and \eqref{q2}, yet $|W'| \leq |W|$ and $|Q'|<|Q|$, a contradiction with \eqref{q3}. This proves \eqref{nstr-g3}.

\sta{\label{nstr-g4} $Q^*$ is anticomplete to $W$.} 

For otherwise there exists a vertex $q_i\in Q^*$ with a neighbor $x\in W$. By \eqref{nstr-g2}, either $x\notin S$ or $x\notin S'$, say the former holds. Let $S=W[a,b]$ and $S'=W[c,d]$, with distinct vertices $a,b,c,d$ appearing in this order with respect to the clockwise direction of $W$. If $x\in S'^*$, then defining $Q'$ to be the subpath of $Q$ from $q_1$ to $q_i$, $(W,v)$ and $Q'$ satisfy both \eqref{q1} and \eqref{q2}, yet $|Q'|<|Q|$, a contradiction with \eqref{q3}. So we may assume, by symmetry, that $x\in W[b^+,c]$. Also, if for some neighbor $y\in W[a,b]$ of $q_1$, $v$ has a neighbor in $W[y^+,x^-]$, then again defining $Q'$ to be the subpath of $Q$ from $q_1$ to $q_i$, $(W,v)$ and $Q'$ satisfy both \eqref{q1} and \eqref{q2}, yet $|Q'|<|Q|$, a contradiction with \eqref{q3}.  Therefore, we have $N_W(q_1)=\{b\}$ and $v$ has no neighbor in $W[b,x]\setminus \{b,x\}$. But then $x\neq c$, since otherwise we may replace $S$ by the sector $W[b,c]$ of $(W,v)$, which violates \eqref{q4}. Similarly, if $q_l$ has a neighbor in $z\in W[c,d]$ for which $v$ has a neighbor in $W[x^+,z^-]$, then defining $Q'$ to be the subpath of $Q$ from $q_i$ to $q_l$, $(W,v)$ and $Q'$ satisfy both \eqref{q1} and \eqref{q2}, yet $|Q'|<|Q|$, a contradiction with \eqref{q3}.  Consequently, we have $N_W(q_l)=\{c\}$ and $v$ has no neighbor in $W[x,c]\setminus \{x,c\}$. On the hand, by \eqref{q3}, no sector of $(W,v)$ contains $\{b,c\}$. Consequently, $v$ is adjacent to $x$. But then we can replace $S$ and $S'$ respectively with the sectors $W[b,x]$ and $W[x,c]$ of $(W,v)$, which contradicts \eqref{q4}. This proves \eqref{nstr-g4}.

Now, \eqref{nstr-g2} together with \eqref{nstr-g3}
and \eqref{nstr-g4} immediately implies that $W\cup Q$ is either a theta, or a pyramid or a prism in $G$, which violates $G\in \mathcal{C}^*$. This concludes the proof of Lemma~\ref{nstr-general}.
\end{proof}

Now we are ready to prove Theorem~\ref{starcutset}, which we restate:

\starcutset*

\begin{proof}

Suppose not. Let $D$ be a component of $G\setminus N[v]$ where $W\subseteq N[D]$. If $(W,v)$ is stranded with contour $(a_1,\ldots,a_k,b)$ for some $k\geq 3$, then $W[b^+,a_1^-]\cup W[a_k^+,b^-]\subseteq D$ and $\{a_2,\ldots, a_{k-1}\}\subseteq N(D)$. So there is a component of $G\setminus (N[v]\setminus \{a_2,\ldots, a_{k-1}))$ containing $D\cup \{a_2,\ldots, a_{k-1}\}$, which in turn contains $W\setminus \{a_1,a_k,b\}$. But this violates Lemma~\ref{strand-general}. It follows that $(W,v)$ is not stranded. Now, since $W\subseteq N[D]$ and $(W,v)$ has at least three sectors, $D$ is not $(W,v)$-local, a contradiction with Lemma~\ref{nstr-general}. This completes the proof of Theorem~\ref{starcutset}.
\end{proof}

\section{Hub-neighbors and  separators} \label{Hubs}

A vertex $v$ is a {\em hub} in $G$ if there is a wheel $(W,v)$ in $G$. If $(W, v)$ is a wheel, we say $v$ {\em is a hub for $W$}. 
We denote by $\Hub(G)$ the set of all hubs of $G$. We write 
$\deg_{\Hub(G)}(v)=|N_{\Hub(G)}(v)|$.
Let $v \in V(G)$ and let $D$ be a component of   $G \setminus N[v]$.
The {\em $v$-closure} of $D$ in $G$ is the set $N[D] \cup \{v\}$.
We denote the $v$-closure of $D$ in $G$  by $\cl_{v,G}(D)$.
Recall that the Ramsey number $R(t,s)$ is the minimum integer such that every graph on
at least $R(t,s)$ vertices contains either a clique of size $t$ or a stable set of size $s$.

The goal of this section is to show if $G$ is a graph in $\mathcal{C}$ and
$X$ is a  minimal separator or PMC in $G$, then the size of the
set $X \setminus \Hub(G)$ is small. In later sections we develop tools to
control the size of $X \cap \Hub(G)$ for certain sets $X$.

We need the following result from \cite{prismfree}.
\begin{lemma}\label{minimalconnected}
Let $x_1, x_2, x_3$ be three distinct vertices of a graph $G$. Assume that $H$ is a connected induced subgraph of $G \setminus \{x_1, x_2, x_3\}$ such that $V(H)$ contains at least one neighbor of each of $x_1$, $x_2$, $x_3$, and that $V(H)$ is minimal subject to inclusion. Then, one of the following holds:
\begin{enumerate}[(i)]
\item For some distinct $i,j,k \in  \{1,2,3\}$, there exists $P$ that is either a path from $x_i$ to $x_j$ or a hole containing the edge $x_ix_j$ such that
\begin{itemize}
\item $V(H) = V(P) \setminus \{x_i,x_j\}$; and
\item either $x_k$ has two non-adjacent neighbors in $H$ or $x_k$ has exactly two neighbors in $H$ and its neighbors in $H$ are adjacent.
\end{itemize}

\item There exists a vertex $a \in V(H)$ and three paths $P_1, P_2, P_3$, where $P_i$ is from $a$ to $x_i$, such that 
\begin{itemize}
\item $V(H) = (V(P_1) \cup V(P_2) \cup V(P_3)) \setminus \{x_1, x_2, x_3\}$;  
\item the sets $V(P_1) \setminus \{a\}$, $V(P_2) \setminus \{a\}$ and $V(P_3) \setminus \{a\}$ are pairwise disjoint; and
\item for distinct $i,j \in \{1,2,3\}$, there are no edges between $V(P_i) \setminus \{a\}$ and $V(P_j) \setminus \{a\}$, except possibly $x_ix_j$.
\end{itemize}

\item There exists a triangle $a_1a_2a_3$ in $H$ and three paths $P_1, P_2, P_3$, where $P_i$ is from $a_i$ to $x_i$, such that
\begin{itemize}
\item $V(H) = (V(P_1) \cup V(P_2) \cup V(P_3)) \setminus \{x_1, x_2, x_3\} $; 
\item the sets $V(P_1)$, $V(P_2)$ and $V(P_3)$ are pairwise disjoint; and
\item for distinct $i,j \in \{1,2,3\}$, there are no edges between $V(P_i)$ and $V(P_j)$, except $a_ia_j$ and possibly $x_ix_j$.
\end{itemize}
\end{enumerate}
\end{lemma}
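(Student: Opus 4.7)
The plan is to first reduce to a classification of minimal connected induced subgraphs of $G$ containing three specified vertices, and then translate the result back into conditions on the $x_i$'s.

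First, for each $i\in\{1,2,3\}$ pick $y_i\in V(H)\cap N(x_i)$, and let $H'$ be an inclusion-minimal connected induced subgraph of $H$ containing $\{y_1,y_2,y_3\}$. Since each $y_i\in V(H')$ is a neighbor of $x_i$, the subgraph $H'$ satisfies the hypotheses imposed on $H$, so by minimality of $H$ we must have $H'=H$. Hence $H$ is itself a minimal connected induced subgraph of $G$ containing $\{y_1,y_2,y_3\}$.

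The heart of the argument is then the structural claim that any such minimal connected induced subgraph $H$ containing three specified vertices $\{y_1,y_2,y_3\}$ takes one of three shapes. Either \emph{(a)} $H$ is an induced path whose endpoints are two of the $y_i$ and whose interior contains the third; or \emph{(b)} $H$ is a \emph{tripod}: three induced paths $Q_1,Q_2,Q_3$ from a common vertex $a$ to $y_1,y_2,y_3$, respectively, pairwise disjoint outside $a$ and pairwise anticomplete outside $a$; or \emph{(c)} $H$ is a \emph{triangle-tripod}: three induced paths $Q_i$ from vertices $a_i$ of a triangle $a_1a_2a_3$ to $y_i$, pairwise disjoint and pairwise anticomplete except for the triangle edges. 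I would prove this by taking a shortest induced $y_1$--$y_2$ path $P_{12}$ in $H$. If $y_3\in V(P_{12})$, minimality forces $H=P_{12}$, giving case (a). Otherwise, take a shortest induced path in $H$ from $y_3$ to $V(P_{12})$ and examine how it meets $P_{12}$: either at a single vertex $a$ with no chords between the two branches, yielding case (b); or, if the induced-subgraph constraint forces a chord at the meeting point, at two adjacent vertices of $P_{12}$, in which case the chord together with the two meeting vertices forms the central triangle of case (c). Any other configuration produces a proper connected induced subgraph still containing $\{y_1,y_2,y_3\}$, contradicting minimality.

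Finally, re-attach $x_1,x_2,x_3$. In case (a), append $x_i$ and $x_j$ to the two endpoints of the path, and let $P$ denote the resulting object; this is either an induced path from $x_i$ to $x_j$ passing through $y_k$, or, if $x_ix_j\in E(G)$, an induced hole containing the edge $x_ix_j$. Since no vertex of $H$ can be removed without either disconnecting $H$ or destroying the neighborhood coverage of some $x_\ell$, the alternative on $x_k$ stated in condition (i) (two non-adjacent neighbors, or exactly two adjacent neighbors) follows directly. In cases (b) and (c), set $P_i=\{x_i\}\cup Q_i$; the induced structure of the tripod or triangle-tripod, together with minimality of $H$, gives the required pairwise disjointness of the $P_i$ and restricts edges between distinct $P_i$ and $P_j$ to (at most) the central-triangle edges together with the possible edges $x_ix_j$. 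The main obstacle is the structural classification in Step 2, in particular ruling out more complex skeletons such as two triangles, longer central cycles, or overlapping branches, and showing that the combination of minimality with the induced-subgraph constraint forces precisely these three shapes.
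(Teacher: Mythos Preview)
The paper does not prove this lemma: it is quoted from \cite{prismfree}, so there is no in-paper argument to compare against. Your outline is close in spirit to the standard proof, but the write-up has genuine gaps.

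First, the vertices $y_1,y_2,y_3$ you pick in Step~1 need not be distinct. If some vertex of $H$ is adjacent to two or all three of the $x_i$ (which minimality does not preclude), your Step~2 classification, stated for three distinct terminals, does not apply as written. This is easy to repair but must be done.

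The more serious problem is Step~3. Your mapping ``case~(a) $\Rightarrow$ outcome~(i), case~(b) $\Rightarrow$ outcome~(ii), case~(c) $\Rightarrow$ outcome~(iii)'' is not correct. Suppose $H$ is a path with ends $y_i,y_j$ and $y_k$ in the interior, and $x_k$ has exactly one neighbour in $H$, namely $y_k$. Then outcome~(i) fails (its hypothesis on $x_k$ requires at least two neighbours), and you must instead show outcome~(ii) holds with $a=y_k$; in particular you now need that $x_i$ and $x_j$ have no neighbours on the ``wrong'' side of $y_k$, which is true by minimality of $H$ but is not the argument you gave. Conversely, in your tripod case~(b), if some branch $Q_i$ is trivial (i.e.\ $y_i=a$), the minimality argument ``drop $Q_i\setminus\{a\}$'' is vacuous and does not prevent $x_i$ from having several neighbours along $Q_j\cup Q_k$; here outcome~(ii) may fail and outcome~(i) must be invoked instead. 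In short, which outcome of the lemma applies depends on the full set $N_H(x_i)$, not on your single chosen representative $y_i$, and the ``minimality handles this'' sentence hides a real case analysis you have not carried out.
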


We use Lemma~\ref{minimalconnected} in order to prove the following.

\begin{theorem} \label{smallminimal}
  Let $G \in \mathcal{C}_t$, let $X$ be a minimal separator of $G$,
  and let $Y \subseteq X \setminus \Hub(G)$ be stable. Then $|Y| \leq 2$.
  Consequently,  $|X \setminus \Hub(G)| \leq    R(t,3)$.
\end{theorem}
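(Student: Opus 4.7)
The second assertion follows from the first by Ramsey's theorem: since $G \in \mathcal{C}_t$ is $K_t$-free, any subset of $V(G)$ of size at least $R(t,3)$ contains an independent set of size $3$, which would contradict $|Y|\le 2$ if taken inside $X \setminus \Hub(G)$. So the task reduces to proving $|Y| \leq 2$. Suppose for contradiction that $Y$ contains three pairwise non-adjacent vertices $y_1, y_2, y_3$. Since $X$ is a minimal separator, there are at least two distinct full components $D_1, D_2$ of $G \setminus X$, and every $y_i$ has a neighbor in each. For each $\ell \in \{1,2\}$ take a minimal connected induced subgraph $H_\ell \subseteq D_\ell$ containing a neighbor of each $y_i$, and apply Lemma~\ref{minimalconnected} with $x_i = y_i$. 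Since $y_1, y_2, y_3$ are pairwise non-adjacent, the variant of case~(i) in which $P$ is a hole containing $y_iy_j$ is excluded, so $H_\ell$ falls into case~(i) (a path $P_\ell$ from some $y_i$ to $y_j$ in which $y_k$ has either two non-adjacent neighbors or exactly two adjacent neighbors in $P_\ell^*$), case~(ii) (a central vertex $a_\ell \in H_\ell$ joined to the $y_i$ by three paths $R_i^\ell$), or case~(iii) (a central triangle $b_1^\ell b_2^\ell b_3^\ell$ joined to the $y_i$ by three paths).

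The proof branches by case analysis on the pair $(H_1,H_2)$. When neither is in case~(i), the two structures combine through $y_1, y_2, y_3$ into one of the three forbidden three-path-configurations: (ii)$+$(ii) yields a theta between $a_1$ and $a_2$ with three paths $R_i^1 \cup \{y_i\} \cup R_i^2$; (ii)$+$(iii) yields a pyramid with apex the central vertex and base the triangle; (iii)$+$(iii) yields a prism on the two triangles joined by three paths through $y_1, y_2, y_3$. Because $D_1$ is anticomplete to $D_2$, and by the ``no cross-edges'' clauses of Lemma~\ref{minimalconnected}, the three paths are internally disjoint and only the permitted cross-edges appear; each configuration is an induced subgraph of $G$, contradicting $G \in \mathcal{C}$.

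When $H_\ell$ is in case~(i) with path $P_\ell$ from $y_i$ to $y_j$ and special vertex $y_k$, the plan is to produce a wheel $(W, y_k)$, contradicting $y_k \notin \Hub(G)$. The hole $W$ is built by concatenating $P_\ell$ with a path $F$ from $y_i$ to $y_j$ through $D_{3-\ell} \cup \{y_i, y_j\}$, chosen using the structure of $H_{3-\ell}$ to pass through at least one additional neighbor of $y_k$ in $D_{3-\ell}$; then $y_k$ has at least three neighbors on $W$, and one verifies that the consecutive-neighbor sectors of $y_k$ on $W$ include at least two of length $\geq 2$. When $H_{3-\ell}$ is in case~(ii) or (iii), the central vertex adjacent to $y_k$ (or the corresponding triangle vertex) supplies the extra neighbor; the sole degenerate sub-case, in which the sub-path from the center to $y_k$ has length exactly one, collapses instead to a theta or pyramid between $y_k$ and the center. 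When both $H_\ell$ are in case~(i) with the same special vertex, the hole $P_1 \cup P_2$ has $y_k$ with at least four neighbors, and within each $P_\ell^*$ at least one sector is long, yielding the wheel directly. The subtlest subcase, and the main obstacle, is the remaining possibility that both $H_\ell$ are in case~(i) with different special vertices; there the route $F$ and the extreme $y_k$-neighbors on the opposite path must be chosen carefully so that $W$ is an induced hole and so that $y_k$ acquires both enough neighbors and enough long sectors. Completing this case analysis forces $|Y| \leq 2$, and the theorem follows.
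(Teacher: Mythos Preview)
Your setup and the (ii)/(iii) pairings are correct and match the paper. The gaps are all in the cases where some $H_\ell$ falls into outcome~(i) of Lemma~\ref{minimalconnected}. Your wheel plan for (i) combined with (ii) or (iii) does not work as stated: taking $W=P_\ell\cup F$ with $F$ a $y_i$--$y_j$ path inside $H_{3-\ell}$, the hole $W$ need not contain any neighbour of $y_k$ coming from $D_{3-\ell}$, because in case~(ii) the unique such neighbour lies on the $a$--$y_k$ path, which meets $F$ only at $a$ (so it is on $F$ only when that path has length one), and in case~(iii) it lies on the path from the triangle to $y_k$, which is disjoint from $F$ altogether. Your ``degenerate sub-case'' clause is also inverted: when the $a$--$y_k$ path has length one, $y_k$ and $a$ are adjacent and there is no theta between them. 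Most importantly, for (i)+(i) with different special vertices you explicitly do not give an argument, and this is the crux, not a routine detail to be filled in.

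The paper resolves all of this with one preliminary claim. Having fixed $Z_1$ in outcome~(i) as a $y_i$--$y_j$ path with special vertex $y_k$, it first shows that $y_k$ is anticomplete to every $y_i$--$y_j$ path $P$ with $P^*\subseteq D_2$: otherwise $Z_1\cup\{y_i,y_j\}\cup P^*$ is a hole on which $y_k$ has at least two neighbours on the $Z_1$-side and at least one on the $D_2$-side, separated by the non-neighbours $y_i,y_j$, which forces a wheel with hub $y_k$, a pyramid, or a pinched prism. With this in hand, (i)+(ii) gives $y_k$ non-adjacent to $a$, and the three paths through $R_k^{3-\ell}$ and through the two halves of $Z_1$ continued along $R_i^{3-\ell},R_j^{3-\ell}$ form a theta or pyramid between $y_k$ and $a$; (i)+(iii) gives a pyramid or prism onto the triangle directly; and in (i)+(i) the claim forces $Z_2$ not to be a $y_i$--$y_j$ path, so the special vertices differ, after which a short explicit case check on the two adjacency patterns (two non-adjacent versus exactly two adjacent neighbours in each path) reads off a theta, pyramid, or prism between the two special vertices --- no further wheel constructions are required.
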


\begin{proof}
  We first prove the first assertion of the theorem.
Suppose $|Y| \geq 3$. Let $x_1,x_2,x_3 \in Y$.
Let $D_1,D_2$ be distinct full components for $X$.
  For $i \in {1,2}$, let $Z_i$ be a minimal connected subgraph of
  $D_i$ containing neighbors of $x_1,x_2,x_3$.
  We apply Lemma~\ref{minimalconnected} to
  $x_1,x_2,x_3$ and  $Z_1,Z_2$. If outcome (ii) holds for both $Z_1$ and $Z_2$, then $G$ contains a theta, a contradiction. If outcome (ii) holds for $Z_1$ and outcome (iii) holds for $Z_2$ (or vice versa), then $G$ contains a pyramid, a contradiction. If outcome (iii) holds for both $Z_1$ and $Z_2$, then $G$ contains a prism, a contradiction. Therefore, we may assume that outcome (i) holds for $Z_1$, and that $Z_1$ is a path from $x_1$ to $x_2$, where $x_3$ has at least two neighbors in $Z_1$. 
  %If one of the last two outcomes holds for
  %each of $Z_1,Z_2$, then $Z_1 \cup Z_2 \cup \{x_1,x_2,x_3\}$ is
  %a theta, a pyramid or a prism, a contradiction. Thus we may assume that the
  %first outcome holds for $Z_1$. 
 % We may assume that $Z_1$ is a path from $x_1$ to $x_2$, and $x_3$ has at least two neighbors in $Z_1$.
  Since $x_3$ is not a hub in $G$ and $G$ does not contain a pinched prism or a pyramid, it
  follows that: 
  
   \sta{\label{P2}
    For every path  $P_2$ from
        $x_1$ to $x_2$ with $P_2^* \subseteq D_2$, we have that $x_3$ is
        anticomplete to
        $P_2$.}

Suppose outcome (ii) holds for $Z_2$, so there exists a vertex $a \in Z_2$ and three paths $P_1, P_2, P_3$ in $Z_2$ from $a$ to $x_1, x_2, x_3$ as in (ii) of Lemma~\ref{minimalconnected}. By \eqref{P2}, it follows that $x_3$ is not adjacent to $a$. If $x_3$ has two non-adjacent neighbors in $Z_1$, then $G$ contains a theta between $x_3$ and $a$, a contradiction. Thus, $x_3$ has exactly two adjacent neighbors in $Z_1$, but now $G$ contains a pyramid with apex $a$ and base $x_3 \cup N_{Z_1}(x_3)$, a contradiction. Therefore, outcome (ii) does not hold for $Z_2$. 

Next, suppose outcome (iii) holds for $Z_2$, so there exists a triangle $a_1a_2a_3$ in $Z_2$ and three paths $P_1, P_2, P_3$ as in (iii) of Lemma~\ref{minimalconnected}. If $x_3$ has two non-adjacent neighbors in $Z_1$, then $G$ contains a pyramid with apex $x_3$ and base $a_1a_2a_3$, a contradiction. Thus, $x_3$ has exactly two adjacent neighbors in $Z_1$, but now $G$ contains a prism with triangles $a_1a_2a_3$ and $x_3 \cup N_{Z_1}(x_3)$, a contradiction. 

Consequently, outcome (i) holds for $Z_2$.
%   In view of  \eqref{P2}, if one of the last two outcomes of Theorem~\ref{minimalconnected} holds for $Z_2$, we deduce that $Z_1 \cup Z_2$ is a pyramid or
%  a prism, a contradiction. It follows that the first outcome
%  holds for $Z_2$, and so there exist $\{i,j,k\}=\{1,2,3\}$ such that
%  $Z_2$ is a path from $x_i$ to $x_j$, and $x_k$ has at least two neighbors in
%  $Z_2$. 
By \eqref{P2}, $Z_2$ is not a path from $x_1$ to $x_2$.
  Thus, we may assume that $Z_2$ is a path from $x_2$ to $x_3$, and that $x_1$ has at least two neighbors in $Z_2$. If $x_1$ has two non-adjacent neighbors in $Z_2$ and $x_3$ has two non-adjacent neighbors in $Z_1$, then $G$ contains a theta between $x_1$ and $x_3$, a contradiction. If $x_1$ has two non-adjacent neighbors in $Z_2$ and $x_3$ has exactly two adjacent neighbors in $Z_1$, then $G$ contains a pyramid with apex $x_1$ and base $x_3 \cup N_{Z_1}(x_3)$, a contradiction. If $x_1$ has exactly two adjacent neighbors in $Z_2$ and $x_3$ has exactly two adjacent neighbors in $Z_1$, then $G$ contains a prism with triangles $x_3 \cup N_{Z_1}(x_3)$ and $x_1 \cup N_{Z_2}(x_1)$, a contradiction. 
%But now $Z_1 \cup Z_2 \cup \{x_1,x_2,x_3\}$ is a theta,  a pyramid or a prism, a contradiction.
This proves the first assertion of the theorem.
The second assertion follows immediately since $G$ is $K_t$-free.
This proves Theorem~\ref{smallminimal}.
    \end{proof}

Theorem~\ref{smallminimal} has the following corollary that we will need later.

\begin{corollary} \label{smallattachments}
  Let $G \in \mathcal{C}_t$, let $v \in V(G)$ and let $D$ be a component of
  $G \setminus N[v]$. Let $Y \subseteq N(D) \setminus \Hub(G)$ be stable. Then, $|Y| \leq 2$.
  Consequently, $|N(D) \setminus \Hub(G)| \leq R(t,3)$ and
  $|N(D)| <  R(t,3)+\deg_{\Hub(\cl_{v,G}(D))}(v) \leq R(t,3)+\deg_{\Hub(G)}(v)$.
  \end{corollary}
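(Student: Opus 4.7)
\textbf{Proof plan for Corollary~\ref{smallattachments}.} The plan is to mimic the argument used for Theorem~\ref{smallminimal}, with the singleton $\{v\}$ playing the role of one of the two full components. Since $D$ is a component of $G\setminus N[v]$, we have $N(D)\subseteq N(v)$, so $v$ is adjacent to every vertex of $N(D)$; this lets $\{v\}$ serve as a connected witness adjacent to all of $Y$. In fact I will prove the slightly stronger statement that $|Y|\leq 2$ already holds for $Y\subseteq N(D)\setminus \Hub(\cl_{v,G}(D))$. Since $\cl_{v,G}(D)$ is an induced subgraph of $G$, every wheel in $\cl_{v,G}(D)$ is a wheel in $G$, so $\Hub(\cl_{v,G}(D))\subseteq \Hub(G)$ and the first assertion of the corollary (the one stated in terms of $\Hub(G)$) is an immediate consequence of the stronger version.

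Suppose for contradiction that $x_1,x_2,x_3\in Y$ are distinct. Let $H$ be a minimal connected subgraph of $D$ containing a neighbor of each $x_i$, and apply Lemma~\ref{minimalconnected} with this $H$. In outcome (ii), extending each path $P_i$ from the internal apex $a\in H$ to $x_i$ by the edge $x_iv$ yields three internally disjoint paths of length at least two joining the non-adjacent vertices $a$ and $v$ (stability of $Y$ rules out the possible $x_ix_j$ edges, and $D\cap N[v]=\emptyset$ rules out any further edges from $v$ into $H$); this is a theta in $G$, a contradiction. Outcome (iii) analogously produces a pyramid with apex $v$, via the paths $v\mh x_i\mh P_i\mh a_i$ and base the internal triangle $a_1a_2a_3\subseteq H$. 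In outcome (i), stability of $Y$ forces the object there to be a path $P$ (not a hole through an edge of $Y$), say from $x_1$ to $x_2$, with $x_3$'s two specified neighbors lying in $V(P)\setminus\{x_1,x_2\}$; if those two neighbors $u,w$ are adjacent, then $\{u,w,x_3\}$ is a triangle and the paths $v\mh x_3$, $v\mh x_1\mh P\mh u$, $v\mh x_2\mh P\mh w$ form a pyramid in $G$ with apex $v$.

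The only remaining subcase is outcome (i) in which $x_3$ has two non-adjacent neighbors $u,w\in V(P)\setminus\{x_1,x_2\}$. Here the induced path $P$ has at least three interior vertices, so $|P|\geq 5$, and the cycle $W=v\mh x_1\mh P\mh x_2\mh v$ is a hole of length at least $6$: it is induced because $P$ is induced, $v$ is anticomplete to $P\setminus\{x_1,x_2\}\subseteq D$, and $x_1x_2\notin E(G)$. Moreover $W\subseteq \cl_{v,G}(D)=N[D]\cup\{v\}$. Now $x_3$ has neighbors $v,u,w$ on $W$, so at least three; and the two sectors of $(W,x_3)$ through $x_1$ and through $x_2$ are both long, since they contain $x_1$, respectively $x_2$, in their interior. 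Thus $(W,x_3)$ is a wheel in $\cl_{v,G}(D)$, placing $x_3\in\Hub(\cl_{v,G}(D))$, a contradiction.

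Given $|Y|\leq 2$, the $K_t$-freeness of $G$ combined with Ramsey's theorem yields $|N(D)\setminus\Hub(\cl_{v,G}(D))|<R(t,3)$, and hence $|N(D)\setminus\Hub(G)|\leq R(t,3)$. Since $N_{\cl_{v,G}(D)}(v)=N(D)$ (as $D$ is disjoint from $N(v)$), we have $\deg_{\Hub(\cl_{v,G}(D))}(v)=|N(D)\cap\Hub(\cl_{v,G}(D))|$, and adding this to the previous bound gives $|N(D)|<R(t,3)+\deg_{\Hub(\cl_{v,G}(D))}(v)\leq R(t,3)+\deg_{\Hub(G)}(v)$. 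The main obstacle is the wheel subcase above: the other outcomes of Lemma~\ref{minimalconnected} each yield a forbidden three-path-configuration directly, but in this one no such forbidden subgraph is immediately available and one must instead confirm both that the constructed wheel has two long sectors and that it lies entirely inside $\cl_{v,G}(D)$ rather than merely in $G$, since this confinement is what delivers the sharpened degree-counting inequality.
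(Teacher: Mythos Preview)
Your proof is correct. The paper's own proof is considerably shorter: it simply observes that $N(D)$ is a minimal separator in the induced subgraph $\cl_{v,G}(D)$ (with $D$ and $\{v\}$ as the two full components) and then invokes Theorem~\ref{smallminimal} applied to $\cl_{v,G}(D)$, which immediately gives $|Y|\leq 2$ for stable $Y\subseteq N(D)\setminus\Hub(\cl_{v,G}(D))$ and the bound $|N(D)\setminus\Hub(\cl_{v,G}(D))|<R(t,3)$. What you do is unpack the proof of Theorem~\ref{smallminimal} in this particular setting, running the case analysis of Lemma~\ref{minimalconnected} on a minimal connected subgraph of $D$ and using the singleton $\{v\}$ in place of the second full component; because that second side is a single vertex, your case analysis is in fact a little lighter than the general one. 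Both arguments rest on the same idea, and your explicit verification that the wheel $(W,x_3)$ lies in $\cl_{v,G}(D)$ is exactly what the paper obtains implicitly by working inside $\cl_{v,G}(D)$ from the start.
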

\begin{proof}
%The first assertion of   Theorem~\ref{smallattachments} follows from Theorem~\ref{smallminimal}
%by   observing that
%  $N(D)$ is a minimal separator in $\cl_{v,G}(D)$ and
%  $N(D) \cap \Hub(\cl_{v,G}(D)) = N_{\Hub(\cl_{v,G}(D))}(v)$.
%The second assertion follows from the first and the fact that
%$\Hub(\cl_{v,G}(D)) \subseteq \Hub(G)$.

%We first prove the first assertion. 
Observe that $N(D)$ is a minimal separator in $\cl_{v, G}(D)$, so it follows from Theorem~\ref{smallminimal} that $|Y| \leq 2$ and $|N(D) \setminus \Hub(\cl_{v, G}(D))| < R(t,3)$.  We also have that $N(D) \cap \Hub(\cl_{v, G}(D)) \subseteq N_{\Hub(\cl_{v,G}(D))}(v) \subseteq \Hub(G)$. Therefore, $|N(D) \setminus \Hub(G)| < R(t,3)$ and $|N(D)| < R(t, 3) + \deg_{\Hub(\cl_{v, g}(D))}(v) \leq R(t, 3) + \deg_{\Hub(G)}(v)$.
\end{proof}

We finish  this section by proving an analogue of Theorem~\ref{smallminimal} for
PMCs instead of minimal separators.

\begin{theorem} \label{smallPMC}
  Let $G \in \mathcal{C}_t$, and let $X$ be a PMC of $G$.
  Let $Y \subseteq X \setminus \Hub(G)$ be stable. 
  Then $|Y| \leq 3$.   Consequently,  $|X \setminus \Hub(G)| \leq  R(t,4)$.
\end{theorem}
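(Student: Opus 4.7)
The plan is to mirror the strategy of Theorem~\ref{smallminimal}: first establish $|Y| \leq 3$, then deduce the size bound via Ramsey. Suppose toward contradiction that $|Y| \geq 4$ and fix pairwise non-adjacent non-hubs $y_1, y_2, y_3, y_4 \in Y$. By Theorem~\ref{thm:PMC_characterization}, each of the six non-edges $y_iy_j$ is covered by some component of $G \setminus X$, and I split into two cases based on whether some component covers three of the $y_i$'s simultaneously.

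If some component $D$ of $G \setminus X$ satisfies $|N(D) \cap \{y_1, \ldots, y_4\}| \geq 3$, then by Theorem~\ref{prop:PMC_adhesions_are_seps} the set $N(D)$ is a minimal separator of $G$ containing at least three pairwise non-adjacent vertices of $V(G) \setminus \Hub(G)$, contradicting Theorem~\ref{smallminimal}. Otherwise every component $D$ of $G \setminus X$ satisfies $|N(D) \cap \{y_1, \ldots, y_4\}| \leq 2$, so each such component covers at most one non-edge in $\{y_1, \ldots, y_4\}$; the six covering components $D_{ij}$ are therefore pairwise distinct, with $N(D_{ij}) \cap \{y_1, \ldots, y_4\} = \{y_i, y_j\}$. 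For each pair I then take an induced path $P_{ij}$ from $y_i$ to $y_j$ of minimum length with $P_{ij}^* \subseteq D_{ij}$; such a path exists since $y_i, y_j \in N(D_{ij})$, and by minimality it has length at least $2$ and no chord from $y_i$ or $y_j$.

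I then claim that $P_{14}$, $P_{12} \cup P_{24}$, and $P_{13} \cup P_{34}$ are three internally disjoint induced paths of length at least two from $y_1$ to $y_4$ with pairwise anticomplete interiors: disjointness of interiors follows because the $D_{ij}$ are distinct components of $G \setminus X$ and $y_2, y_3 \in X$, while pairwise anticompleteness follows because $y_k \notin N(D_{ij})$ whenever $k \notin \{i, j\}$. Since $y_1y_4 \notin E(G)$, this produces a theta between $y_1$ and $y_4$, contradicting $G \in \mathcal{C}_t$ and proving $|Y| \leq 3$. This theta construction is the main technical step: the prerequisite dichotomy guarantees six pairwise distinct covering components, but assembling them into an induced theta requires the careful path choice and component-anticompleteness observations above. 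Finally, $|X \setminus \Hub(G)| \leq R(t, 4)$ is immediate, since $R(t, 4)$ vertices of $X \setminus \Hub(G)$ must induce either a $K_t$ (ruled out as $G$ is $K_t$-free) or a stable set of size $4$ (ruled out by $|Y| \leq 3$).
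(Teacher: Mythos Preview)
Your proof is correct and takes essentially the same approach as the paper. The paper applies Theorem~\ref{smallminimal} directly to each $N(D_{ij})$ to conclude $N(D_{ij})\cap Y=\{y_i,y_j\}$ (which collapses your two cases into one line) and then builds the theta between $y_1$ and $y_2$ via $P_{12}$, $P_{13}\cup P_{23}$, $P_{14}\cup P_{24}$ rather than between $y_1$ and $y_4$; these are purely cosmetic differences.
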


\begin{proof}
  Suppose $|Y| \geq 4$, let $y_1,y_2,y_3,y_4 \in Y$. By Theorem~\ref{thm:PMC_characterization} for every $i \neq j \in \{1, \dots, 4\}$
  there exists a component $D_{ij}$ of $G \setminus X$ such that
  $y_i, y_j \in N(D_{ij})$; let $P_{ij}$ be a path from $y_i$ to $y_j$ with interior in $D_{ij}$. Recall that by Theorem~\ref{prop:PMC_adhesions_are_seps}
  all $N(D_{ij})$ are minimal separators, and therefore by
  Theorem~\ref{smallminimal} $N(D_{ij}) \cap Y=\{y_i, y_j\}$. But now
  $\{y_1,y_2,y_3,y_4\}  \cup P_{12} \cup P_{23} \cup P_{13} \cup P_{14} \cup P_{24}$
  is a theta in $G$, a contradiction. This proves the first assertion of the
  theorem. The second assertion follows from the fact that $G$ is $K_t$-free.
  \end{proof}

\section{Extending tree decompositions of neighborhoods}
\label{sec:neighborhood_construction}

Next we prove a result that allows us to extend a tree decomposition of
the neighborhood of a vertex of $G$  to a tree decomposition of $G$.

Let $v \in V(G)$, and let $D_1, \dots, D_m$ be the components of
$G \setminus N[v]$. 
By Corollary~\ref{smallattachments}, for every $i \in \{1, \dots, m\}$, it holds that
$|N(D_i) \setminus \Hub(G)| \leq R(t,3)$.
Let $H$ be the graph obtained from $N(v) \setminus \Hub(G)$ by adding
a stable set of new vertices $\{d_1, \dots, d_m\}$
where $N_H(d_i)=N(D_i) \setminus \Hub(G)$ for every $i$. In other
words, $H$ is obtained from $G \setminus N_{\Hub(G)}[v]$
by contracting each $D_i$ to a corresponding kkvertex $d_i$. 

Throughout this section, we fix $G$ and $H$ as above. We first prove a structural result about $H$. 

\begin{theorem}
  \label{HinCt}
  If $G \in \mathcal{C}_t$, then $H \cup \{v\} \in \mathcal{C}_t$ and $H$ is
  wheel-free.
\end{theorem}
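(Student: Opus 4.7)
The strategy is a systematic \emph{lifting} argument: any forbidden induced subgraph of $H \cup \{v\}$, or any wheel in $H$, will be transformed into a forbidden induced subgraph of $G$, contradicting $G \in \mathcal{C}_t$ or the fact that the hub is supposed to lie outside $\Hub(G)$. The lift replaces each vertex $d_i$ appearing in the bad configuration by a suitable connected subgraph of $D_i$: a minimum induced path joining $d_i$'s two relevant neighbors when $d_i$ has degree $2$ in the configuration, and the output of Lemma~\ref{minimalconnected} applied to $D_i$ and $d_i$'s three relevant neighbors when $d_i$ has degree $3$. Two facts will ensure the result is induced with the expected type: (a) distinct $D_i$'s are components of $G \setminus N[v]$, hence pairwise disjoint and anticomplete in $G$, so different lifts cannot interfere; and (b) an edge in $G$ from a vertex of $D_i$ to a vertex of $N(v) \setminus \Hub(G)$ appearing in the lifted configuration must correspond to an edge at $d_i$ in $H$, since $N_H(d_i) = N(D_i) \setminus \Hub(G)$ and every vertex $N(v) \setminus \Hub(G)$ preserves its $G$-neighborhood in $H$.

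For Part~1, $K_t$-freeness is easiest: a clique of $H \cup \{v\}$ containing any $d_i$ can be converted to a clique of $G$ of the same size by replacing each $d_i$ by $v$, because $v$ is complete in $G$ to $N(v) \setminus \Hub(G)$, which contains every non-$d_i$ vertex of the clique. For theta/pyramid/(pinched) prism-freeness, consider such a configuration $J$ in $H \cup \{v\}$; every $d_i \in J$ has degree $2$ or $3$ (or $4$, in the pinched-prism centre). Replace degree-$2$ occurrences by minimum induced $D_i$-paths between their two $J$-neighbours, and degree-$3$ occurrences by whichever outcome of Lemma~\ref{minimalconnected} applies to $D_i$ and the three $J$-neighbours; the three outcomes of the lemma contribute, respectively, a path extension, a theta-like ``Y'', or a pyramid-like ``triangle with legs'', each of which concatenates with the unchanged part of $J$ (via facts (a), (b)) to yield a theta, pyramid or prism of $G$. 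The pinched-prism centre with four neighbours pairs naturally with outcome (iii) or can be split into two degree-$2$ replacements along the induced two-edge matching, producing one of the same forbidden structures in $G$.

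For Part~2, the wheel-freeness of $H$, suppose $(W,u)$ is a wheel in $H$. Lift $W$ to a hole $W'$ of $G$ by replacing each $d_j \in W$ (with $j \neq i$ if $u = d_i$) by a minimum induced $D_j$-path whose endpoints are the unique $W'$-neighbours of the two neighbours of $d_j$ in $W$; facts (a), (b) make $W'$ an induced cycle, and the long sectors of $(W,u)$ only get longer under this operation. If $u \in N(v) \setminus \Hub(G)$, then by the same minimality/(b) argument, the neighbourhood of $u$ in $W'$ is in edge-preserving bijection with its neighbourhood in $W$; so $(W',u)$ is a wheel of $G$, contradicting $u \notin \Hub(G)$. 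If $u = d_i$, the hub-neighbourhood $Y = N_W(d_i) \subseteq N(D_i) \setminus \Hub(G)$ contains no stable triple by Corollary~\ref{smallattachments}, while the two long sectors of $(W, d_i)$ force at least two non-adjacent pairs of consecutive $Y$-vertices on $W'$. Choose three vertices of $Y$ reflecting these constraints, apply Lemma~\ref{minimalconnected} to $D_i$ and these three vertices, and assemble the resulting subgraph in $D_i$ with the two long arcs of $W'$ to produce a theta, pyramid or prism of $G$; the key observation here is that ``no stable triple in $Y$'' rules out the degenerate assemblies in which all three candidate paths have length $1$.

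The main obstacle lies in the final step of Part~2: for each of the three outcomes of Lemma~\ref{minimalconnected} one must verify, case by case, that some combination with the long sectors of $W'$ produces an honest theta, pyramid or prism (meeting the length requirements, e.g.\ length $\geq 2$ for each theta-path and ``at most one path of length $1$'' for a pyramid). Analogous but simpler case-checking is required in Part~1, namely to confirm that when $d_i$ has degree~$3$ in $J$ the outcome of Lemma~\ref{minimalconnected} slots into the rest of $J$ without creating unwanted chords. These checks are routine but must be done carefully; the minimality in the choice of lifting paths (and of the subgraph produced by Lemma~\ref{minimalconnected}) is what rules out the only genuinely threatening chord, namely one between two lifted pieces in $D_i$ and the remainder of the configuration, which is always a vertex of $N(v) \setminus \Hub(G) \setminus N_H(d_i)$ and therefore non-adjacent to $D_i$ in $G$.
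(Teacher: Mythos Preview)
Your lifting strategy is the right intuition, but the execution has a real gap, and the paper organises the proof differently in a way that avoids it.

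The concrete error is in Part~2, case $u \in N(v) \setminus \Hub(G)$. You claim $N_{W'}(u)$ is in ``edge-preserving bijection'' with $N_W(u)$, but this fails whenever $u$ is adjacent in $H$ to some $d_j\in W$: then $u\in N(D_j)$, and after replacing $d_j$ by a path with interior in $D_j$, the vertex $u$ may be adjacent to several interior vertices of that path, or to none of them---you have no control over which vertices of $D_j$ land on the path. So the neighbourhood can both grow and shrink, and the bijection is simply not there. The conclusion that $(W',u)$ is still a wheel is salvageable, but it needs a different argument: by Corollary~\ref{smallattachments} the set $\{u,r_j,s_j\}$ (with $r_j,s_j$ the $W$-neighbours of $d_j$) cannot be a stable triple in $N(D_j)\setminus\Hub(G)$, so $u$ is adjacent to $r_j$ or $s_j$, and from this one checks that vertices in distinct sectors of $(W,u)$ stay in distinct sectors of $(W',u)$.

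The paper sidesteps this, and the parallel complication of applying Lemma~\ref{minimalconnected} at several $d_i$ at once in Part~1, by a cleaner inductive set-up. It defines $G_I$ to be the graph obtained by contracting only those $D_i$ with $i\in I$, and proves by induction on $|I|$ that $G_I\cup\{v\}\in\mathcal C_t$ and that no vertex of $N(v)\cup\{d_i:i\in I\}$ is a hub of $G_I\cup\{v\}$. At the inductive step only a single $d_1$ is un-contracted, so there is never more than one lift to analyse, and the inductive hypothesis (that $G_{I\setminus\{1\}}\cup\{v\}$ already lies in $\mathcal C_t$ and already has no hubs in $N(v)$) is available whenever Corollary~\ref{smallattachments} is invoked. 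This also renders your Part~2 case $u=d_i$ nearly immediate: once $G_I\cup\{v\}\in\mathcal C_t$ is established, a putative wheel $(W,d_i)$ combined with the no-stable-triple property of $N(d_i)$ forces $W\cup\{d_i\}$ itself to be a pyramid or a pinched prism, a contradiction reached entirely inside $G_I\cup\{v\}$ with no lifting to $G$ required.
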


\begin{proof}

  For $I \subseteq \{1,\dots, m\}$ let $G_I$ be obtained from
  $G \setminus  N_{\Hub(G)}[v]$
by contracting each $D_i$ with $i \in I$ to a vertex $d_i$.
Then $H=G_{\{1, \dots, m\}}$ and $G \setminus \{v\} =G_{\emptyset}$.

We prove by induction on $|I|$
that $G_{I} \cup \{v\} \in \mathcal{C}_t$ and that
$\Hub(G_{I} \cup \{v\}) \cap (N(v) \cup \bigcup_{i \in I} \{d_i\}) = \emptyset.$ 
%Note (Sophie): I changed this from Hub(G_I) to Hub(G_I \cup \{v\}) because later in the proof, we apply Theorem 5.3: but we need to apply it to G_I \cup \{v\}. 

Since $N_{G_{I}}(d_i) \subseteq N_G(v)$ for every $i \in I$,
it follows that $\omega(G_I \cup \{v\}) \leq w(G) <t$.

Now suppose $G_{I} \cup \{v\}$ contains a generalized prism, theta, or pyramid
$Q$.
We may assume that $1 \in I$ and $d_1 \in Q$. It follows that $d_1$ has at least two neighbors. Let $I'=I \setminus \{1\}$.
Inductively, $G_{I'} \cup \{v\} \in \mathcal{C}_t$.
If $d_1$ has exactly two  neighbors, say $a$ and $b$,  in $Q$, then we can replace
$d_1$ by a path from $a$ to $b$ with interior in $D_1$ to get
a configuration of the same type as $Q$ in $G_{I'} \cup \{v\}$, a contradiction.
Thus, $d_1$ has at least three neighbors in $Q$.
Note that inductively, $N(v) \cap \Hub(G_{I'} \cup \{v\})=\emptyset$, so by Corollary~\ref{smallattachments}, $N(d_1)$ does not contain a stable set of size
three. Consequently, one of the following holds (with notation as in the definitions of the corresponding graphs):
\begin{enumerate}
\item $Q$ is a pyramid or a prism and $d_1=b_1$, or $Q$ is a pinched prism and $d_1 \in N(b_1) \cap Q$; or
\item $Q$ is a pinched prism with center $d_1$.
  \end{enumerate}
  
\sta{\label{not-first} The first alternative does not hold.}

Suppose the first alternative holds, and so
$N(d_1)\cap Q =\{x_1,x_2,x_3\}$, where $x_2$ is adjacent to $x_3$, and there are no
other edges with both ends in $\{x_1,x_2,x_3\}$.
Let $Z$ be a minimal connected subgraph of $D_1$ containing a neighbor of
each of $x_1,x_2,x_3$. Then $Q'=(Q \setminus \{d_1\}) \cup Z$ is an induced
subgraph of $G_{I'} \cup \{v\}$, and therefore $Q'$ is not a generalized prism,
a pyramid or a theta.

We apply  Lemma~\ref{minimalconnected} to $x_1, x_2, x_3$ and $Z$. Suppose outcome (i) of Lemma~\ref{minimalconnected} holds. Suppose first that $P$ is a hole containing the edge $x_2x_3$. If $x_1$ has two non-adjacent neighbors in $P$, then $\{v, x_1\} \cup P$ contains a pyramid with apex $x_1$ and base $vx_2x_3$, a contradiction. So $x_1$ has exactly two adjacent neighbors in $P$, and now $\{v, x_1\} \cup P$ is a prism with triangles $x_1 \cup (N(x_1) \cap P)$ and $vx_2x_3$, a contradiction. Thus, we may assume that $P$ is a path from $x_1$ to $x_2$. Suppose first that $Q$ is a pyramid with apex $a'$.
%If $x_3$ is only adjacent to $x_2$ and the neighbor of $x_2$ in $P$, then $Q'$ is a pyramid with apex $a'$ and base $x_3 \cup N_P(x_3)$, a contradiction.  
Since $x_3$ has at least two neighbors in $P \setminus \{x_2\}$, it follows that $x_3$ has two non-adjacent neighbors in $P$. Therefore, $x_3$ has a neighbor in $P$ non-adjacent to $x_2$. If $x_3$ is non-adjacent to $a'$, then $Q'$ is a theta from $x_3$ to $a'$, a contradiction, so $x_3$ is adjacent to $a'$. Now, $x_3$ is a hub for the hole given by $Q' \setminus x_3$, 
%removed the below (Sophie): x_3 is adjacent to a' but not to its neighbors - so not a pinched prism. 
%or $Q'$ is a pinched prism with center $x_3$, 
a contradiction. Therefore, $Q$ is a prism or a pinched prism. Let $x_1'x_2'x_3'$ be the triangle of $Q$ not containing $d_1$, where possibly $x_3 = x_3'$ or $x_2 = x_2'$, and $Q$ contains paths from $x_i$ to $x_i'$ for $1 \leq i \leq 3$. If $x_3 = x_3'$, then $x_3$ is a hub for the hole given by $Q' \setminus \{x_3\}$, a contradiction, so $x_3 \neq x_3'$. But now $Q'$ contains a pyramid from $x_3$ to $x_1'x_2'x_3'$, a contradiction. Consequently, outcome (i) of Lemma~\ref{minimalconnected} does not hold. 

Next, suppose outcome (ii) of Lemma~\ref{minimalconnected} holds, so there exists a vertex $a \in Z$ and three paths $P_1, P_2, P_3$ from $a$ to $x_1, x_2, x_3$, respectively, such that $Z = P_1 \cup P_2 \cup P_3 \setminus \{x_1, x_2, x_3\}$ and $P_1 \setminus \{a\}$, $P_2 \setminus \{a\}$, and $P_3 \setminus \{a\}$ are pairwise disjoint and anticomplete to each other, except for the edge $x_2x_3$. 
% added (Sophie)
We first consider the case in which both $P_2$ and $P_3$ have length one. Now $Q'$ is isomorphic to the graph obtained from $Q$ by subdividing the edge $d_1 x_1$, which is not in a triangle of $Q$. This implies that if $Q$ is a pyramid, prism, or pinched prism, then so is $Q'$, a contradiction.  
It follows that at least one of $P_2$, $P_3$ has length more than one, so $Z \cup \{v, x_1, x_2, x_3\}$ is a pyramid, a contradiction.
% the above replaces the following: 
%First, suppose $Q$ is a pyramid with apex $a'$. If $a$ is adjacent to $x_2$ and $x_3$, then $Q'$ is a pyramid with apex $a'$ and base $ax_2x_3$, a contradiction. Thus, we may assume that $a$ is non-adjacent to $x_2$. Now, if $x_3$ is non-adjacent to $a'$, then $Q'$ contains a theta between $a$ and $x_2$, a contradiction, so $x_3$ is adjacent to $a'$. It follows that $x_3$ is a hub for the hole $Q' \setminus P_3$, a contradiction. Therefore, $Q$ is a prism or a pinched prism. Let $x_1'x_2'x_3'$ be the triangle of $Q$ not containing $d_1$, where possibly $x_3 = x_3'$, and $Q$ contains pairwise disjoint and anticomplete paths from $x_i$ to $x_i'$ for $1 \leq i \leq 3$. If $a$ is adjacent to both $x_2$ and $x_3$, then $Q'$ is a prism or a pinched prism with triangles $ax_2x_3$ and $x_1'x_2'x_3'$, a contradiction. Thus, we may assume that $a$ is non-adjacent to $x_2$. Suppose $x_3 = x_3'$. If $a$ is adjacent to $x_3$, then $x_3$ is a hub for the hole given by $Q' \setminus x_3$, a contradiction, so $a$ is non-adjacent to $x_3$. But now $Q'$ contains a pyramid with apex $x_2$ and base $x_1'x_2'x_3'$, a contradiction. Therefore, $x_3 \neq x_3'$, and $Q$ contains a pyramid with apex $x_3$ and base $x_1'x_2'x_3'$, a contradiction. 
Consequently, outcome (ii) of Lemma~\ref{minimalconnected} does not hold. 

It follows that outcome (iii) of Lemma~\ref{minimalconnected} holds, so there exists a triangle $a_1a_2a_3 \in Z$ and paths $P_1, P_2, P_3$ as in outcome (iii) of Lemma~\ref{minimalconnected}. But now $\{v, x_1, x_2, x_3\} \cup Z$ is a prism with triangles $vx_1x_2$ and $a_1a_2a_3$, a contradiction. Thus, the first alternative does not hold. This proves \eqref{not-first}. \\

Next we prove: 

\sta{\label{not-second} The second alternative does not hold.} 

%Some modifications below to the proof of this statement (Sophie) -- we need to choose R more carefully so nobody has nbrs in the interior of R
Let $Q$ be a pinched prism with $d_1 = b_1$. Let $p, q, r, s$ be the neighbors of $d_1$ in $Q$, with $p$ adjacent to $q$ and $r$ adjacent to $s$. Let $z \in D_1$. 
%If $z$ is complete to $\{p, q, r, s\}$, then $(Q \setminus \{d_1\}) \cup \{z\}$ is a pinched prism in $G_{I'} \cup \{v\}$, a contradiction. If $z$ is complete to $\{p, q, r\}$, then $Q \setminus \{d_1\} \cup \{z\}$ is a pyramid in $G_{I'} \cup \{v\}$, a contradiction. Therefore, every vertex $z \in D_1$ has at most two neighbors in $\{p, q, r, s\}$. 
If $z$ has two non-adjacent neighbors in $\{p,q,r,s\}$, then $(Q \setminus \{d_1\}) \cup \{z\}$ is a pinched prism, pyramid, or theta in $G_{I'} \cup \{v\}$ (depending on whether $z$ has four, three, or two neighbors in $\{p, q, r, s\}$), a contradiction. Therefore, $N(z) \cap \{p,q,r,s\}$ is either a subset of $\{p,q\}$ or a subset of $\{r, s\}$ for all $z \in D_1$. 

Now let $R$ be a shortest path in $D_1$ with ends $u, w$, say, such that $u$ has a neighbor in $\{p,q\}$ and $w$ has a neighbor in $\{r, s\}$. Then no vertex in $R^*$ has a neighbor in $\{p, q, r, s\}$; consequently $(Q \setminus \{d_1\}) \cup R$ is a prism, pyramid, or theta in $G_{I'} \cup \{v\}$ (depending on whether $|N(\{u, w\}) \cap \{p, q, r, s\}|$ equals four, three, or two), a contradiction. 
%Since $d_1$ is adjacent to $p, q, r, s$,  each of $p, q, r, s$ has a neighbor in $D_1$. Let $u \in D_1$ be adjacent to $p$ and let $w \in D_1$ be adjacent to $r$. If $u = w$, then $u \cap (Q \setminus \{d_1\}) = \{p, r\}$ and $G_{I'} \cup \{v\}$ contains a theta between $u$ and $w$, a contradiction. Therefore, $u \neq w$. Further, by symmetry, $u$ is not adjacent to $s$, and $w$ is not adjacent to $q$. Let $R$ be a path from $u$ to $w$ in $D_1$. We have that $N(u) \cap \{p, q, r, s\} \in \{\{p\}, \{p, q\}\}$ and $N(w) \cap \{p, q, r, s\} \in \{\{r\}, \{r, s\}\}$. If $N(u) \cap \{p, q, r, s\} = \{p\}$ and $N(w) \cap \{p, q, r, s\} = \{r\}$, then $(Q \setminus \{d_1\}) \cup R$ is a theta in $G_{I'} \cup \{v\}$, a contradiction. If $N(u) \cap \{p, q, r, s\} = \{p, q\}$ and $N(w) \cap \{p, q, r, s\} = \{r\}$ or if $N(u) \cap \{p, q, r, s\} = \{p\}$ and $N(w) \cap \{p, q, r, s\} = \{r, s\}$, then $(Q \setminus \{d_1\}) \cup R$ is a pyramid in $G_{I'} \cup \{v\}$, a contradiction. Therefore, $N(u) \cap \{p, q, r, s\} = \{p, q\}$ and $N(w) \cap \{p, q, r, s\} = \{r, s\}$, but now $(Q \setminus \{d_1\}) \cup R$ is a prism in $G_{I'} \cup \{v\}$, a contradiction. 
This proves \eqref{not-second}. \\

Therefore, $G_I \cup \{v\}$ does not contain a generalized prism, theta, or pyramid. This proves that $G_I \cup \{v\} \in \mathcal{C}_t$. By Corollary~\ref{smallattachments}, $N(d_i)$ does not contain a stable set of size three, so it follows that $d_i \not \in \Hub(G_I \cup \{v\})$ for all $i \in I$. It remains to show that $N(v) \cap \Hub(G_I \cup \{v\}) = \emptyset$. Suppose $(W,x)$ is a wheel in $G_I \cup \{v\}$,  where $x \in N(v)$. Inductively,
$d_1 \in W$. Let $r,s$ be the neighbors of
$d_1$ in $W$, and let $W'$ be the hole obtained from $W$ by replacing
$d_1$ by a  path $P$ from $r$ to $s$ with interior in $D_1$. If $d_1$ is non-adjacent to $x$, then $(W',x)$ is a wheel in $G_{I'}$,
a contradiction. This proves that $d_1$ is adjacent to $x$.
Since $\{x,r,s\}$ is not a stable set in $N(D_1)$, we
may assume that $x$ is adjacent to $r$. This implies that
if two vertices of $W \setminus N(x)$ belong to different
sectors of $(W,x)$, then they also belong to different sectors of
$(W',x)$. Now it follows that $(W',x)$ is a wheel in
$G_{I'}$, a contradiction.
This proves that $N(v) \cap \Hub(G_I \cup \{v\})=\emptyset$, and completes the
proof of Theorem~\ref{HinCt}.
\end{proof}

Let $(T_0,\chi_0)$ be a
    tree decomposition of $H$, and for $i \in \{1, \dots, m\}$,
    let $(T_i,\chi_i)$ be a tree decomposition of $D_i$.
    For $i \in \{1, \dots, m\}$ let $v(i) \in T_i$ be some vertex such that that
    $d_i \in \chi_0(v(i))$.
        Let $T$ be a tree obtained from the union of $T_0,T_1, \dots, T_m$
    by adding, for every $i>0$,  a unique  edge from some vertex of $T_i$ to $v(i)$.
    For $u \in T$ , let $\chi(u)$ be defined as follows.
\begin{itemize}
    \item  If $u \in V(T_0)$, let
    $$\chi(u)=(\chi_0(u) \setminus \{d_1, \dots, d_m\})  \cup N_{\Hub(G)}(v) \cup \{v\} \cup \bigcup_{d_i \in \chi_0(u)}N_H(d_i).$$
    \item     If $u \in V(T_i)$ for $i \in \{1, \dots, m\}$,  let
    $$\chi(u)=\chi_i(u) \cup N_H(d_i) \cup N_{\Hub(G)}(v).$$
\end{itemize}

    We  prove the following:

    \begin{theorem}
      \label{extendnbrhood}
      With the notation as above, $(T,\chi)$ is a tree decomposition of $G$.
          \end{theorem}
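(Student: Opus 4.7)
The plan is to verify the three defining properties of a tree decomposition. Throughout, we partition the vertex set of $G$ as
$$V(G) = \{v\} \,\cup\, N_{\Hub(G)}(v) \,\cup\, (N(v)\setminus \Hub(G)) \,\cup\, \bigcup_{i=1}^m D_i,$$
and we observe from the definition of $\chi$ that (a) $v$ and every vertex of $N_{\Hub(G)}(v)$ lies in $\chi(u)$ for every $u\in V(T_0)$; (b) every vertex of $N_{\Hub(G)}(v)$ and every vertex of $N_H(d_i)$ lies in $\chi(u)$ for every $u\in V(T_i)$; and (c) $v$ does not appear in any $\chi(u)$ with $u \in V(T_i)$, $i\geq 1$.

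For property (i), each $x\in N_{\Hub(G)}(v)\cup\{v\}$ lies in every bag of $T_0$; each $x\in N(v)\setminus \Hub(G)$ is a vertex of $H$, hence appears in some $\chi_0(u)$ and therefore in $\chi(u)$; and each $x\in D_i$ appears in some $\chi_i(u)$, so in $\chi(u)$. For property (ii), I would case-split on the edges of $G$ according to which of the four sets above contains each endpoint. Edges inside $D_i$ are covered by $(T_i,\chi_i)$; edges inside $N(v)\setminus \Hub(G)$ or between such a vertex and a $d_j$-style analogue are edges of $H$ and hence covered by $(T_0,\chi_0)$ after using the substitutions in the definition of $\chi$; edges with an endpoint in $\{v\}\cup N_{\Hub(G)}(v)$ are covered because such vertices lie in every bag of the appropriate sub-tree (all of $T_0$, all of $T_i$, respectively). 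The only slightly nontrivial case is an edge $xy$ with $x\in N(v)\setminus \Hub(G)$ and $y\in D_i$: here $y\in \chi_i(u)$ for some $u\in V(T_i)$ and $x\in N_H(d_i)\subseteq \chi(u)$ by definition, so both endpoints lie in $\chi(u)$.

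For property (iii), I would check vertex-by-vertex. For $v$, the set of bags containing it is exactly $V(T_0)$, which is connected. For $x\in N_{\Hub(G)}(v)$ it is all of $V(T)$. For $x\in D_i$ it is a connected subtree of $T_i$ by the corresponding property of $(T_i,\chi_i)$, since $x$ lies in no other $\chi_j$ or $\chi_0$-bag. The main step is handling $x\in N(v)\setminus \Hub(G)$: in $T_0$, define $S_x=\{u: x\in \chi_0(u)\}$, and for every $i$ with $d_i\in N_H(x)$ (equivalently $x\in N_H(d_i)$), define $S_{d_i}=\{u: d_i\in \chi_0(u)\}$. Each $S_x,S_{d_i}$ is connected in $T_0$, and since $xd_i\in E(H)$, the sets $S_x$ and $S_{d_i}$ intersect; thus $S_x\cup\bigcup_{i: d_i\in N_H(x)} S_{d_i}$ is a connected subtree of $T_0$, and by the definition of $\chi$ it is exactly $\{u\in V(T_0): x\in\chi(u)\}$. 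Finally, for each $i$ with $x\in N_H(d_i)$, every bag of $T_i$ contains $x$, and $T_i$ is attached to $T_0$ at the edge $v(i)$ with $v(i)\in S_{d_i}$. Hence the set of all bags of $T$ containing $x$ is connected.

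The main obstacle is this last case, where the same vertex of $N(v)\setminus \Hub(G)$ may be adjacent (in $H$) to several $d_i$'s and therefore live simultaneously in bags of several sub-trees $T_0,T_{i_1},\ldots,T_{i_k}$; correctness rests on the Helly-type observation that $S_x$ intersects every relevant $S_{d_i}$, so their union stays connected. Everything else is essentially routine bookkeeping from the definition of $\chi$.
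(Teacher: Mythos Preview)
Your proposal is correct and follows essentially the same approach as the paper: both verify the three axioms by partitioning $V(G)$ into $\{v\}$, $N_{\Hub(G)}(v)$, $N(v)\setminus\Hub(G)$, and $\bigcup D_i$, and both identify the connectivity check for $x\in N(v)\setminus\Hub(G)$ as the only nontrivial step, handling it via the observation that $\chi_0^{-1}(x)$ meets each $\chi_0^{-1}(d_i)$ with $xd_i\in E(H)$.
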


    \begin{proof}
     Since $T$ is obtained by adding a single edge from $T_0$ to each of
     the trees $T_1, \dots, T_m$, it follows that $T$ is a tree.
     Clearly every vertex of $G$ is in $\chi(u)$ for some $u \in V(T)$.
     Next we check that for every edge $xy$ of $G$ there exists $u \in V(T)$
     such that $x,y \in \chi(u)$. This is clear if  $x,y \in N[v]$,  and if
     $x,y \in D_i$. 
   Thus we may assume that $x \in N(v)$ and
     $y \in D_1$, say. Now $x \in N(D_1) \cup N_{\Hub(G)}(v)$, and therefore
      $x \in \chi(u)$ for every $u \in V(T_1)$. Let $u \in V(T_1)$ such that $y \in \chi_i(u) \subseteq \chi(u)$; then $x,y \in \chi(u)$ as required.

   Finally we show that $\chi^{-1}(x)$ is a tree for every $x \in V(G)$.
 Let $x \in V(G)$.  For $i \in \{0, \dots, m\}$ define $F_{i}(x)=\{u \in V(T_i) \text{ such that }x \in \chi(u)\}$. Let $F(x)=F_0(x) \cup \bigcup_{i=1}^m(F_i(x))$. We  need to show that for every $x \in X$ we have that
     $T[F(x)]$ is connected.
     If $x \in N_{\Hub(G)}(v)$, then $F(x)=V(T)$, and the claim holds.
     If $x=v$, then $F(x)=V(T_0)$, and again the claim holds.
     Thus we may assume that $x \in V(H) \cup \bigcup_{i=1}^m D_i$. 
     If $F_0(x)=\emptyset$, then there exists a unique $i$ such that $x \in D_i$; therefore $F(x)=F_i(x)$, and $T[F(x)]$ is connected because $T_i[F_i(x)]$ is connected.
     Thus we may assume that $x \in V(H) \cap N(v)$. Let $I \subseteq \{1, \dots, m\}$ be the set of all $i$ such that $x \in N(D_i) = N(d_i)$. It follows that $F_i(x) = \emptyset$ for all $i \in \{1, \dots, m\} \setminus I$.
       
     First we observe  that $x \in \chi(v(i))$ for every $i \in I$, 
     and so there is an edge between
     $T_i[F_i(x)] = T_i$ and
     $T_0[F_0(x)]$. Since $F_i(x)=V(T_i)$ is connected, it is enough to prove that      $T_0[F_0(x)]$ is connected.

 Observe that
 $$F_0(x)=\chi_0^{-1}(x) \cup \bigcup_{i \in I} \chi_0^{-1}(d_i).$$
 Since $xd_i \in E(H)$ for every $i \in I$,  it follows
       that $\chi_0^{-1}(x) \cap \chi_0^{-1}(d_i) \neq \emptyset$ 
       for every $i \in I$. Since
       $T_0[\chi_0^{-1}(u)]$ is a tree for every $u \in V(H)$,
       it follows that $T_0[F_0(x)]$ is connected, as required. This
       proves Theorem~\ref{extendnbrhood}.

     \end{proof}

\section{Stable sets  of unbalanced hubs} \label{sec:centralbag}

Let $G$ be a graph with $|V(G)| = n$. We say that $v \in V(G)$ is {\em balanced} if
$|D| \leq \frac{n}{2}$ for every component $D$ of $G \setminus N[v]$. If $v$ is not balanced, then $v$ is {\em unbalanced}.

In this section, we describe a way to decompose  graphs in $\mathcal{C}_t$ using stable sets of unbalanced hubs. Recall that $\mathcal{C}$ is the class of (theta, pyramid, generalized prism)-free graphs. We begin with the following structural result.

    \begin{lemma} \label{nohub}
      Let $G$ be a cube-free graph in $\mathcal{C}$, and suppose that 
      $v$ is a hub of $G$. Let $D$ be a component of $G\setminus N[v]$.
      Then $v$ is not a hub in
      $\cl_{v,G}(D)$.
    \end{lemma}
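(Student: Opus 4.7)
The plan is to reduce the lemma directly to Theorem~\ref{starcutset}, but applied \emph{inside} $H := \cl_{v,G}(D)$ rather than inside $G$. Suppose for contradiction that $v$ is a hub of $H$. Then there exists a wheel $(W,v)$ in $H$, and by picking one with $|N_W(v)|$ minimum among such wheels I obtain an optimal wheel $(W,v)$ in $H$. Since $G \in \mathcal{C}^*$ and $\mathcal{C}^*$ is hereditary (closed under induced subgraphs), the induced subgraph $H$ of $G$ also lies in $\mathcal{C}^*$, so Theorem~\ref{starcutset} applies verbatim to $H$.

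The key computation is to identify $H \setminus N_H[v]$. Because $D$ is a component of $G \setminus N[v]$, $D$ is disjoint from $N[v]$, and every vertex of $N(D)$ lies in $N(v)$ (any vertex outside $D$ adjacent to $D$ belongs to $N[v] \setminus \{v\} = N(v)$). Recalling $H = D \cup N(D) \cup \{v\}$, this yields $N_H(v) = N(v) \cap H = N(D)$, hence
$$H \setminus N_H[v] \;=\; H \setminus (N(D) \cup \{v\}) \;=\; D.$$
In particular $D$ is the \emph{unique} component of $H \setminus N_H[v]$, and moreover
$$N_H[D] \;=\; D \cup N_H(D) \;=\; D \cup N(D) \;=\; H \setminus \{v\}.$$

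Now the contradiction is immediate: the hole $W$ of the optimal wheel $(W,v)$ in $H$ never contains the hub $v$, so $W \subseteq H \setminus \{v\} = N_H[D]$. But Theorem~\ref{starcutset} applied to the optimal wheel $(W,v)$ in $H \in \mathcal{C}^*$ asserts that no component of $H \setminus N_H[v]$ has $W$ contained in its closed neighborhood, and we have just exhibited exactly such a component, namely $D$ itself. This contradiction shows $v$ is not a hub of $H$, as desired. The only real ``obstacle'' is conceptual rather than technical: once one thinks to apply the cutset theorem inside $H = \cl_{v,G}(D)$ rather than inside the ambient graph $G$, the argument is almost tautological, because passing to the $v$-closure collapses the outside of $N[v]$ down to the single component $D$ whose closed neighborhood swallows all of $H \setminus \{v\}$.
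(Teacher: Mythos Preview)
Your proof is correct and follows essentially the same approach as the paper: both argue by contradiction, take an optimal wheel $(W,v)$ inside $H=\cl_{v,G}(D)$, apply Theorem~\ref{starcutset} to $H\in\mathcal{C}^*$, and then observe that $H\setminus N_H[v]=D$ is a single component whose closed neighborhood $N_H[D]=H\setminus\{v\}$ contains $W$. Your write-up is in fact more explicit than the paper's about why $N_H(v)=N(D)$ and why $W\subseteq N_H[D]$, but the argument is the same.
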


    \begin{proof}
      Suppose $v$ is a hub in $\cl_{v,G}(v)$. Now, Theorem~\ref{starcutset} 
      applied to an optimal wheel with hub $v$ in $\cl_{v,G}(D)$ implies that either $\cl_{v,G} \setminus N[v]$
      is not connected, or some vertex of $N(v) \cap \cl_{v,G}(v)$  has no
      neighbor in $\cl_{v,G} \setminus N[v]$. But $\cl_{v,G} \setminus N[v]=D$,
and  $N(v) \cap \cl_{v,G}(v)=N(D)$
      a contradiction.
      This proves Lemma~\ref{nohub}.
    \end{proof}

    Let $G$ be an $n$-vertex graph, and let $v$ be an unbalanced vertex of $G$. Then there exists a unique
    component $D$ of $G \setminus N[v]$ with $|D|>\frac{n}{2}$. Write
    $B(v)=D$, $C(v)=\{v\} \cup N(D)$ and
    $A(v)=V(G) \setminus (B(v) \cup C(v))$. We call
    $(A(v),C(v),B(v))$ the {\em canonical star separation corresponding to
      $v$}. (Note that at this point we do not require that $A(v)$ is
    non-empty.)

    Following \cite{prismfree}, we say
     that two unbalanced vertices $u, v \in V(G)$ are {\em star twins} if $B(u)= B(v)$, $C(u) \setminus \{u\} = C(v) \setminus \{v\}$, and
    $A(u) \cup \{u\} = A(v) \cup \{v\}$. Note that every two of these conditions imply the third.

Let $S$ be a stable set of unbalanced vertices.
     Let $\mathcal{O}$ be a fixed total  ordering of $S$. Let $\leq_A$ be a relation on $S$ defined as follows (we think of $\leq_A$ as a ``partial order by the $A$-sides): 
\begin{equation*}
\hspace{2.5cm}
x \leq_A y \ \ \ \text{ if} \ \ \  
\begin{cases} x = y, \text{ or} \\ 
\text{$x$ and $y$ are star twins and $\mathcal{O}(x) < \mathcal{O}(y)$, or}\\ 
\text{$x$ and $y$ are not star twins and } y \in A(x).\\
\end{cases}
\end{equation*} 
Note that if $x \leq_A y$, then either $x = y$, or $y \in A(x).$
%The following lemma shows that $\leq_A$ is a partial order on $V(G)$. 

We start with the following three results:
  \begin{lemma} \label{shields}
If $y \in A(x)$, then  $A(y) \cup \{y\} \subseteq A(x) \cup \{x\}$.
\end{lemma}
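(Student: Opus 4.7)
The plan is to show that the big component $B(x)$ is swallowed by $B(y)$, and then that the separator $C(x) \setminus \{x\}$ is swallowed by $B(y) \cup (C(y) \setminus \{y\})$; together these two containments force $A(y) \cup \{y\} \subseteq A(x) \cup \{x\}$.

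First I would unpack what $y \in A(x)$ means. By definition $A(x) = V(G) \setminus (C(x) \cup B(x))$ and $C(x) = \{x\} \cup N(B(x))$, so $y \neq x$, $y \notin B(x)$, and $y$ has no neighbor in $B(x)$. In particular $B(x) \cap N[y] = \emptyset$. Since $B(x)$ is connected (it is a component of $G \setminus N[x]$) and avoids $N[y]$, it is contained in a single component $D$ of $G \setminus N[y]$. As $x$ is unbalanced, $|B(x)| > n/2$, so $|D| > n/2$, and because $B(y)$ is the \emph{unique} component of $G \setminus N[y]$ of size more than $n/2$, I conclude $D = B(y)$ and hence $B(x) \subseteq B(y)$.

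Next I would show that $N(B(x)) \cap (A(y) \cup \{y\}) = \emptyset$. Clearly $y \notin N(B(x))$, since $y \in A(x)$. Now suppose for contradiction that some $z \in N(B(x))$ lies in $A(y)$. Then $z$ has a neighbor in $B(x) \subseteq B(y)$, so $z \in B(y) \cup N(B(y)) = (B(y) \cup C(y)) \setminus \{y\}$, contradicting $z \in A(y) = V(G) \setminus (B(y) \cup C(y))$. Hence $N(B(x)) \subseteq V(G) \setminus (A(y) \cup \{y\}) = B(y) \cup (C(y) \setminus \{y\})$.

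Finally I would combine the two containments. Since $C(x) \setminus \{x\} = N(B(x))$, the previous paragraph gives $C(x) \setminus \{x\} \subseteq V(G) \setminus (A(y) \cup \{y\})$, and the first step gives $B(x) \subseteq B(y) \subseteq V(G) \setminus (A(y) \cup \{y\})$. Therefore $(B(x) \cup C(x)) \setminus \{x\} \subseteq V(G) \setminus (A(y) \cup \{y\})$, which taking complements in $V(G)$ is exactly $A(y) \cup \{y\} \subseteq A(x) \cup \{x\}$, as required. There is no real obstacle here; the only subtlety is the careful bookkeeping of the disjoint partition $V(G) = A(\cdot) \cup C(\cdot) \cup B(\cdot)$ and remembering that the inequality $|B(x)| > n/2$ is what pins down which component of $G \setminus N[y]$ contains $B(x)$.
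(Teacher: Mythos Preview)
Your proof is correct and follows essentially the same approach as the paper: first show $B(x)\subseteq B(y)$ via the $>n/2$ size argument, then show $C(x)\setminus\{x\}\subseteq B(y)\cup C(y)$, and take complements. The only cosmetic difference is that the paper argues $B(x)\subseteq B(y)$ by intersecting the two big sides rather than naming the component $D$, and handles the $C(x)\setminus\{x\}$ step by a direct case split on membership in $N[y]$ instead of by contradiction.
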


  \begin{proof}
    Let $n = |V(G)|$.   Since $C(y)\subseteq N[y]$ and $y$ is anticomplete to $B(x)$, we have $B(x) \subseteq G \setminus N[y]$. Since $|B(x)|, |B(y)| > n/2$, it follows that $B(x) \cap B(y) \neq \emptyset$. Since $B(x)$ is connected and contains no vertex in $N(B(y)) \subseteq N(y)$, it follows that $B(x) \subseteq B(y)$. Let $a \in C(x)\setminus \{x\}$. Then $a$ has a neighbor in $B(x)$ and thus in $B(y)$. If $a \in N[y]$, then $a \in C(y)$. If $a \not \in N[y]$, then $a \in B(y)$. It follows that $C(x) \setminus \{x\} \subseteq C(y) \cup B(y)$.  But now  $A(y) \setminus \{x\} \subseteq A(x)$, as required. This
  proves Lemma~\ref{shields}.
  \end{proof}
  
\begin{lemma} \label{order}
      $\leq_A$ is a partial order on $S$. 
\end{lemma}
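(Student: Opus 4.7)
\medskip\noindent\textbf{Proof plan.} The relation $\leq_A$ is defined by three disjoint cases, so the plan is to verify reflexivity, antisymmetry, and transitivity by reducing every comparison to one of those three cases, using Lemma~\ref{shields} and a couple of simple observations about $B(x)$ to transfer information along chains. Reflexivity is immediate from the first clause of the definition.

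For antisymmetry, suppose $x \leq_A y$ and $y \leq_A x$ with $x \neq y$. If $x$ and $y$ are star twins then the definition gives $\mathcal{O}(x) < \mathcal{O}(y) < \mathcal{O}(x)$, a contradiction. Otherwise $y \in A(x)$ and $x \in A(y)$. The argument used in the proof of Lemma~\ref{shields} shows that if $y \in A(x)$ then $B(x) \subseteq B(y)$ (both $B$'s exceed $n/2$, so they intersect, and $B(x)$ is a connected subgraph of $G\setminus N[y]$). Symmetrically $B(y) \subseteq B(x)$, so $B(x) = B(y)$, hence $N(B(x)) = N(B(y))$, and since $y \in A(x)$ means $y \notin N(B(x))$ (and similarly $x \notin N(B(y))$), one gets $C(x)\setminus\{x\} = C(y)\setminus\{y\}$ and $A(x)\cup\{x\} = A(y)\cup\{y\}$. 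This makes $x,y$ star twins, contradicting the assumption.

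For transitivity, suppose $x \leq_A y$ and $y \leq_A z$ with all three distinct. I would split into four cases according to whether each pair is star twins. Lemma~\ref{shields} gives $A(y)\cup\{y\}\subseteq A(x)\cup\{x\}$ whenever $y\in A(x)$, and the star twins relation is an equivalence, so in every case the same two kinds of facts propagate: either $z \in A(x)\cup\{x\}$ by Lemma~\ref{shields}, or $x$ and $z$ lie in a common star twin class with $\mathcal O(x)<\mathcal O(z)$. The only thing to rule out is that the propagation collapses to $z = x$ or that a hybrid configuration arises where $x,z$ are star twins but some intermediate pair is not (or conversely). Both issues are handled by the observation from the antisymmetry argument: $B(u) = B(v)$ forces $u,v$ to be star twins, and if $u\in A(v)$ with $u\neq v$ then $B(v) \subsetneq B(u)$. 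Chaining these strict/equal containments of the $B$-sides along $x,y,z$ determines exactly which star twin classes they share and prevents any accidental equality or twinning mismatch.

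\medskip\noindent\textbf{Main obstacle.} The delicate step is the bookkeeping in the four transitivity subcases: one must simultaneously track whether each consecutive pair is star twins (so the total order $\mathcal O$ decides their comparison) or lies strictly below via the $A$-sides (where Lemma~\ref{shields} applies), and ensure the conclusion about $(x,z)$ matches exactly one clause in the definition of $\leq_A$. The key simplifying fact that makes this go through cleanly is that star twin classes are characterized by the value of $B(\cdot)$, so the partial chain of $B$-containments $B(x)\subseteq B(y)\subseteq B(z)$ tells us precisely which pairs share a $B$-side and hence which clause to use.
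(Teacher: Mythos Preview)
Your plan is essentially correct and follows the same underlying mechanism as the paper (Lemma~\ref{shields} plus a characterization of star twins), but the paper streamlines the transitivity argument considerably by first recording the observation that \emph{whenever $x\leq_A y$ and $x\neq y$, one has $y\in A(x)$} (this holds in the star-twin clause too, since $A(x)\cup\{x\}=A(y)\cup\{y\}$ forces $y\in A(x)$). With that in hand there is no four-case split: from distinct $x\leq_A y\leq_A z$ one gets $y\in A(x)$ and $z\in A(y)$, Lemma~\ref{shields} gives $A(z)\cup\{z\}\subseteq A(y)\cup\{y\}\subseteq A(x)\cup\{x\}$, hence $z\in A(x)$; if $x,z$ are not star twins this is the desired conclusion, and if they are, the chain of inclusions collapses to equalities and the paper's auxiliary claim (``$A(x)\cup\{x\}=A(y)\cup\{y\}$ implies $x,y$ are star twins'') makes all three pairwise star twins, so $\mathcal O(x)<\mathcal O(y)<\mathcal O(z)$.

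Your $B$-side bookkeeping is an equivalent dual (indeed $y\in A(x)$ gives $B(x)\subseteq B(y)$, and $B(x)=B(y)$ characterizes star twins), but one of your stated observations is false as written: ``if $u\in A(v)$ with $u\neq v$ then $B(v)\subsetneq B(u)$'' fails precisely when $u,v$ are star twins. You need the extra hypothesis ``and $u,v$ are not star twins'' there; once corrected, your four-case analysis goes through, though it is more laborious than necessary.
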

\begin{proof}
% Edit (Sophie): added proof that they're star twins
We first prove the following claim: 

\sta{\label{startwinlemma} If $x$ and $y$ satisfy $A(y) \cup \{y\} = A(x) \cup \{x\}$, then $x$ and $y$ are star twins.}

 Since $x \in A(y)$, it follows that $N[x] \subseteq A(y) \cup C(y)$; so $C(x) \setminus \{x\} = N(x) \cap C(x) = N(x) \setminus A(x) = N(x) \setminus A(y) = N(x) \cap C(y) \subseteq C(y) \setminus \{y\}$ (where we used that $y \not\in N(x)$), which shows that $C(x) \setminus \{x\} \subseteq C(y) \setminus \{y\}$; by symmetry, it follows that $C(x) \setminus \{x\} = C(y) \setminus \{y\}$. This proves \eqref{startwinlemma}.\vsp
% end of addition

%To show that $\leq_A$ is a partial order on $V(G)$, we need to 
We show that $\leq_A$ is reflexive, antisymmetric, and transitive. By definition, $\leq_A$ is reflexive. Suppose $x \leq_A y$ and $y \leq_A x$ for some $x, y \in S$ with $x \neq y$. Since $x \leq_A y$, it follows that $y \in A(x)$, and since $y \leq_A x$, it follows that $x \in A(y)$. From Lemma~\ref{shields}, it follows that $A(y) \cup \{y\} = A(x) \cup \{x\}$.
Thus, by \eqref{startwinlemma}, $x$ and $y$ are star twins. But $x \leq_A y$ implies that $\mathcal{O}(x) < \mathcal{O}(y)$, and $y \leq_A x$ implies that $\mathcal{O}(y) < \mathcal{O}(x)$, a contradiction. Therefore, $\leq_A$ is antisymmetric. 

Next we prove transitivity.
Suppose $x \leq_A y$ and $y \leq_A z$ for distinct $x, y, z \in V(G)$.
We need to show that $x \leq_A z$.
Since $y \leq_A z$, it follows that $z \in A(y)$. Since $x \leq_A y$, it follows that $y \in A(x)$.
By Lemma~\ref{shields}, it follows that $A(z) \cup \{z\} \subseteq A(y) \cup \{y\} \subseteq A(x) \cup \{x\}$; so $z \in A(x)$. If $x$ and $z$ are not star twins, then $x \leq_A z$, so we may assume that $x$ and $z$ are star twins. 

Since $x$ and $z$ are star twins, $A(x) \cup \{x\} = A(z) \cup \{z\}$. But now $A(z) \cup \{z\} = A(y) \cup \{y\} = A(x) \cup \{x\}$, so $x, y, z$ are all pairwise star twins by \eqref{startwinlemma}. It follows that $\mathcal{O}(x) < \mathcal{O}(y) < \mathcal{O}(z)$, so $x \leq_A z$, as required.  
%Since $y \in A(x)$, it follows that $y \in A(z)$, so $y$ and $z$ are star twins and $\mathcal{O}(y) < \mathcal{O}(z)$. Since $y$ and $z$ are star twins, $A(y) \setminus \{z\} = A(z) \setminus \{y\}$. Since $x \in A(z)$, it follows that $x \in A(y)$, so $x$ and $y$ are star twins and $\mathcal{O}(x) < \mathcal{O}(y)$. Therefore, $\mathcal{O}(x) < \mathcal{O}(z)$, and $x \leq_A z$, as required.
This proves Lemma~\ref{order}.
\end{proof}

  Let $G$ be a graph, and let $S \subseteq V(G)$ be a stable set of unbalanced vertices of $G$. Let $\Core(S)$ be a the set of all $\leq_A$-minimal elements of $S$. 

  %We start with a lemma.
  \begin{lemma}\label{looselylaminar}
    Let $S$ be as above, and let $u,v \in \Core(S)$. Then
    $A(u) \cap C(v)=C(u) \cap A(v)=\emptyset$.
  \end{lemma}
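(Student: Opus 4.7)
The plan is to dispose of the trivial case $u=v$ (where $A(u)\cap C(u)=\emptyset$ directly from the partition $V(G)=A(u)\sqcup C(u)\sqcup B(u)$), and then for $u\neq v$ to locate each of $u,v$ inside the other's canonical star separation. The key claim to establish first is the following: for distinct $u,v\in \Core(S)$, we have $v\in B(u)$ and $u\in B(v)$. This is the only place where minimality and stability of $S$ really enter; once it is in hand, the desired disjointness is almost immediate.

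To prove the key claim, note that since $u,v\in \Core(S)$ are distinct and $\leq_A$-minimal, they must be $\leq_A$-incomparable. In particular they are not star twins (else $\mathcal O$ would totally order them), and neither lies in the other's $A$-side, so $v\in B(u)\cup C(u)$ and $u\in B(v)\cup C(v)$. Suppose $v\in C(u)\setminus\{u\}=N(B(u))$. A standard observation (used implicitly earlier in the proof of Lemma~\ref{shields}) is that $N(B(u))\subseteq N(u)$: indeed, for any $z\in N(B(u))$ with neighbor $b\in B(u)$, if $z\notin N[u]$ then $z$ would be in the same component of $G\setminus N[u]$ as $b$, namely $B(u)$, contradicting $z\notin B(u)$; so $z\in N(u)$. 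Consequently $v\in N(u)$, contradicting the stability of $S$. Hence $v\in B(u)$, and symmetrically $u\in B(v)$.

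For the main statement, suppose for contradiction that $w\in A(u)\cap C(v)$. If $w=v$ then $v\in A(u)$, contradicting $v\in B(u)$. So $w\in C(v)\setminus\{v\}=N(B(v))\subseteq N(v)$, meaning $w$ is adjacent to $v$. But $v\in B(u)$, so $w$ has a neighbor in $B(u)$, placing $w\in N(B(u))\subseteq C(u)$; this contradicts $w\in A(u)$, since $A(u)$ and $C(u)$ are disjoint. The symmetric equality $C(u)\cap A(v)=\emptyset$ follows by exchanging the roles of $u$ and $v$.

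The main (modest) obstacle is the first step: extracting from $\leq_A$-minimality together with $S$ being stable that distinct elements of $\Core(S)$ must each lie in the other's big component. After that, the final contradiction is a one-line consequence of the identity $C(v)\setminus\{v\}\subseteq N(v)$ applied to the fact that $v$ itself lies in $B(u)$.
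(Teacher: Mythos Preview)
Your proof is correct and uses the same ingredients as the paper's: stability of $S$ forces $u\notin C(v)$ and $v\notin C(u)$, while $\leq_A$-minimality forces $u\notin A(v)$ and $v\notin A(u)$, and from there the contradiction is immediate. The only difference is organizational: you first isolate the claim $u\in B(v)$, $v\in B(u)$ and then finish by pushing $w$ into $C(u)$ via its adjacency to $v\in B(u)$, whereas the paper argues directly that $x\in A(v)\cap C(u)$ forces $u\in A(v)$ (since $x\in C(u)\subseteq N[u]$ and $A(v)$ has no neighbors in $B(v)$) and then invokes minimality---but these are two sides of the same coin.
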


  \begin{proof}
    Suppose $x \in A(v) \cap C(u)$. Since $x \in C(u)$, it follows that
    $u \in A(v) \cup C(v)$. Since $u$ is non-adjacent to $v$, we have that
    $u \not \in C(v)$, and therefore $u \in A(v)$. 
    Since $u,v \in \Core(S)$,
    we have that $v \not \leq_A u$, and therefore $u$ and $v$ are star twins.
    But then again $u$ and $v$ are comparable in $\leq_A$ since they
    are comparable in $\mathcal{O}$, a contradiction. This proves Lemma~\ref{looselylaminar}.
\end{proof}
    
  Define $\beta(S)= \bigcap_{v\in \Core(S)}(B(v) \cup C(v))$.
We call $\beta(S)$ the {\em central bag} for $S$. Next, we prove several properties of central bags. 
  
  \begin{theorem}\label{centralbag}
    Let $G,S$ be as above with $G \in \mathcal{C}_t$. The following hold:
    \begin{enumerate}
    \item For every $v \in \Core(S)$ we have $C(v) \subseteq \beta(S)$. \label{central-1}
    \item For every $v \in \Core(S)$ we have that
      $\deg_{\beta(S)}(v) < R(t,3)+\deg_{\Hub(G)}(v)$.\label{central-2}
  \item For every component $D$ of $G \setminus \beta(S)$, there exists $v \in \Core(S)$ such that $D \subseteq A(v)$. Further, if $D$ is a component of $G \setminus \beta(S)$ and $v \in \Core(S)$ such that $D \subseteq A(v)$, then $N(D) \subseteq C(v)$. \label{central-3}
  %Every component of $G \setminus \beta(S)$ is contained in $A(v)$ for
  %  some $v \in \Core(S)$, and  $N(D) \subseteq C(v)$.
    %\item If $G$ is connected, then $\beta(S)$ is connected.\label{central-4}
  %\item $\beta(S) \subseteq B(v) \cup C(v) \cup \Core(S)$ for  every $v \in S$.
  \item If $G$ is cube-free, then $S \cap \Hub(\beta(S))=\emptyset$.\label{central-5}
    \end{enumerate}
    \end{theorem}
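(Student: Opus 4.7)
My plan is to prove the four items in sequence, with \eqref{central-1}, \eqref{central-2}, \eqref{central-3} following from straightforward bookkeeping on the $(A,C,B)$-partitions and the bounds from Section~\ref{Hubs}, while \eqref{central-5} will be the only item that genuinely uses cube-freeness, via Lemma~\ref{nohub}.

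For \eqref{central-1}, I will fix $v \in \Core(S)$ and check $C(v) \subseteq B(u) \cup C(u)$ for every $u \in \Core(S)$: the case $u = v$ is trivial, and the case $u \neq v$ follows from Lemma~\ref{looselylaminar} (which gives $C(v) \cap A(u) = \emptyset$) combined with the partition $V(G) = A(u) \sqcup C(u) \sqcup B(u)$. Intersecting over $u$ yields $C(v) \subseteq \beta(S)$. For \eqref{central-2}, I would use \eqref{central-1} to record $v \in C(v) \subseteq \beta(S)$ and $A(v) \cap \beta(S) = \emptyset$; since $N(v)$ avoids $B(v)$ and $\{v\}$, this forces $N(v) \cap \beta(S) \subseteq N(v) \cap (C(v) \setminus \{v\}) = N(B(v))$ (using $N(B(v)) \subseteq N(v)$), and Corollary~\ref{smallattachments} applied to the component $B(v)$ of $G \setminus N[v]$ supplies the bound $|N(B(v))| < R(t,3) + \deg_{\Hub(G)}(v)$.

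For \eqref{central-3}, let $D$ be a component of $G \setminus \beta(S)$ and pick any $d_0 \in D$; since $d_0 \notin \beta(S)$ there is $v \in \Core(S)$ with $d_0 \notin B(v) \cup C(v)$, so $d_0 \in A(v)$. I would then propagate: if $d \in D \cap A(v)$ has a $G$-neighbor $d' \in D$, then $d' \notin B(v)$ (since $A(v)$ is anticomplete to $B(v)$) and $d' \notin C(v) \subseteq \beta(S)$ (since $d' \in D$), so $d' \in A(v)$; connectedness of $D$ then gives $D \subseteq A(v)$. For the second assertion, $N(D) \subseteq \beta(S)$ because $D$ is a component of $G \setminus \beta(S)$, and $N(D) \subseteq A(v) \cup C(v)$ because $D \subseteq A(v)$ is anticomplete to $B(v)$; combining these with $A(v) \cap \beta(S) = \emptyset$ forces $N(D) \subseteq C(v)$.

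For \eqref{central-5}, suppose $v \in S \cap \Hub(\beta(S))$. I would first verify $v \in \Core(S)$: if $v \in S \setminus \Core(S)$, finiteness of $S$ supplies a minimal $u \in \Core(S)$ with $u <_A v$, and in either case of the definition of $<_A$ (star-twins with $\mathcal{O}(u) < \mathcal{O}(v)$, or not star-twins with $v \in A(u)$) one concludes $v \in A(u)$, contradicting $v \in \beta(S) \subseteq B(u) \cup C(u)$. Once $v \in \Core(S)$, a wheel $(W, v)$ in $\beta(S)$ remains an induced wheel in $G$, so $v \in \Hub(G)$; Lemma~\ref{nohub} then tells me $v$ is not a hub of $\cl_{v,G}(B(v)) = B(v) \cup C(v)$. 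However, $\beta(S) \subseteq B(v) \cup C(v)$ (the $u = v$ term in the defining intersection), so $(W, v)$ is also an induced wheel in $B(v) \cup C(v)$, a contradiction. The hard part is \eqref{central-5}: it is the sole item requiring the cube-free hypothesis, and before one may invoke Lemma~\ref{nohub} one must verify that the hub actually lies in $\Core(S)$, which is why the $\leq_A$-machinery and the trichotomy in its definition cannot be bypassed.
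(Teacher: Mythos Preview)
Your proposal is correct and follows essentially the same route as the paper: items \eqref{central-1}--\eqref{central-3} are derived from Lemma~\ref{looselylaminar} and Corollary~\ref{smallattachments} exactly as the paper does, and item \eqref{central-5} hinges on Lemma~\ref{nohub} together with the observation that any $u \in S \setminus \Core(S)$ lies in $A(v)$ for some $v \in \Core(S)$. The only cosmetic difference is the order of the argument in \eqref{central-5}: the paper applies Lemma~\ref{nohub} first to deduce $\beta(S) \not\subseteq B(u) \cup C(u)$ and hence $u \notin \Core(S)$, whereas you first establish $v \in \Core(S)$ and then reach the contradiction---so your remark that one ``must verify that the hub actually lies in $\Core(S)$'' \emph{before} invoking Lemma~\ref{nohub} is slightly overstated, but the logic is the same either way.
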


  \begin{proof}
    \eqref{central-1} is immediate
    from Lemma~\ref{looselylaminar}; and \eqref{central-2} follows immediately
    from Corollary~\ref{smallattachments}.

    Next we prove  \eqref{central-3}. Let $D$ be a component of $G \setminus \beta(S)$. Since  $G \setminus \beta(S)=\bigcup_{v \in \Core(S)}A(v)$, there exists
    $v \in \Core(S)$ such that $D \cap A(v) \neq \emptyset$.
    If $D \setminus A(v) \neq \emptyset$, then, since $D$ is connected, it follows that $D \cap N(A(v)) \neq \emptyset$; but then $D \cap C(v) \neq \emptyset$, contrary to \eqref{central-1}. Since $N(D) \subseteq \beta(S)$ and $N(D) \subseteq A(v) \cup C(v)$, it follows that $N(D) \subseteq C(v)$. 
    %there exists $u \in \Core(S)$
    %such that some vertex $ p \in D \cap A(v)$ is adjacent to a
    %vertex $q \in (D \setminus A(v)) \cap A(u)$. Since
    %$q \in A(u) \setminus A(v)$, Theorem \ref{looselylaminar} implies that
    %$q \in B(v)$. But then $q$ is anticomplete to $A(v)$, a contradiction.
    %This proves that every component of $G \setminus \beta(S)$ is contained
    %in $A(v)$ for some $v \in \Core(S)$. 
    %Now, let $D$ be a component of $G \setminus \beta(S)$, and let $v \in \Core(S)$ be such that
    %$D \subseteq A(v)$. Since $N(A(v)) \subseteq C(v)$,
    %it follows that $N(D) \subseteq C(v)$. 
    This proves \eqref{central-3}.

%    \sloppy Next we prove \eqref{central-4}. Let $n = |V(G)|$. Let $D$ be a connected component of $\beta(S)$. Let $I = \{v \in \Core(S) \text{ such that } C(v) \cap D \neq \emptyset\}$. Since $C(v)$ is connected for every $v \in \Core(v)$, and since by \eqref{central-2}, $C(v) \subseteq \beta(S)$,
 %   it follows that $C(v) \subseteq D$ for all $v \in I$. Since
  %  $N(A(v)) \subseteq C(v)$, we deduce that $D \cup \bigcup_{v \in I} A(v)$ is a connected component of $G$. Since $G$ is connected, it follows that $D \cup \bigcup_{v \in I} A(v) = V(G)$. This implies that $D = \beta(s)$, and \eqref{central-4} follows. 

% \begin{comment}
%     To prove Theorem \ref{centralbag}.5, suppose there is $u \in S$ and
%     $y \in (\beta(S) \cap A(u)) \setminus \Core(S)$. Then
%     $u \not \in \Core(S)$. It follows
%     that there exist $v \in \Core(S)$ such that $v \leq_A u$.
%     Suppose $u$ and $v$ are not star twins. Then $u \in A(v)$.
%     But now, by Theorem \ref{shields}, $y \in A(v)$, a contradiction.
%     This proves that $x,y$ are star twins, and so
%     $A(v) \setminus \{u\}=A(u) \setminus \{v\}$.
%     Since $y \in A(u) \setminus A(v)$, it follows that $y=v$, contrary to
% the fact that $y \not \in \Core(S)$. This proves Theorem \ref{centralbag}.5.
% \end{comment}
To prove \eqref{central-5}, let $u \in S \cap \Hub(\beta(S))$.
Then, by Lemma~\ref{nohub}, it follows that $\beta(S) \not \subseteq \cl_{u,G}(B(u))$.
Since $\cl_{u,G}(B(u))=B(u) \cup C(u)$, it follows that $u \not \in \Core(S)$.
But then $u \in A(v)$ for some $v \in \Core(S)$, and so $u \not \in \beta(S)$,
a contradiction. This proves \eqref{central-5} and completes the proof of
Theorem~\ref{centralbag}.
  \end{proof}

  We finish this section with a theorem that allows us to transform a
  tree decomposition of $\beta(S)$ into a tree decomposition of $G$.
   
    Let $G,S$ be as above with $G \in \mathcal{C}_t$ connected and cube-free, and let $D_1, \dots, D_m$ be the components of
    $G \setminus \beta(S)$. For $i \in \{1,\dots, m\}$ let
    $r(D_i)$ be the  $\mathcal{O}$-minimal vertex of $S$ such that
    $D_i \subseteq A(v)$ (such $v$ exists by Theorem~\ref{centralbag}\eqref{central-3}).

    Let $(T_{\beta},\chi_{\beta})$ be a
    tree decomposition of $\beta(S)$, and for $i \in \{1, \dots, m\}$
    let $(T_i,\chi_i)$ be a tree decomposition of $D_i$.
    Let $T$ a the tree obtained from the union of $T_{\beta},T_1, \dots, T_m$
    by adding, for each $i \in \{1, \dots, m\}$  a unique  edge from some vertex of $T_i$ to some vertex
    $v \in V(T_{\beta})$ such that $r(D_i) \in \chi_{\beta}(v)$.
    For $u \in T$ , let $\chi(u)$ be defined as follows.
\begin{itemize}
    \item  If $u \in V(T_\beta)$, let
    $$\chi(u)=\chi_{\beta}(u) \cup \bigcup_{v \in \Core(S) \cap \chi_{\beta}(u)}C(v).$$
    \item If $u \in V(T_i)$ for $i \in \{1, \dots, m\}$, let
    $$\chi(u)=\chi_i(u) \cup C(r(D_i)).$$
\end{itemize}

    \begin{theorem}
      \label{extendtree}
      With the notation as above, $(T,\chi)$ is a tree decomposition of $G$.
          \end{theorem}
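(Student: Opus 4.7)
The plan is to verify, in turn, the three axioms of a tree decomposition for $(T,\chi)$. Since every vertex of $G$ lies in $\beta(S) \cup \bigcup_{i=1}^m D_i$ and each such vertex is covered by the corresponding constituent decomposition $(T_\beta,\chi_\beta)$ or $(T_i,\chi_i)$, axiom (i) is immediate. For axiom (ii), edges inside $\beta(S)$ are covered by $(T_\beta,\chi_\beta)$ and edges inside each $D_i$ are covered by $(T_i,\chi_i)$, so the only remaining edges are those with one end $x \in D_i$ and the other end $y \in \beta(S)$; for such an edge Theorem~\ref{centralbag}\eqref{central-3} gives $y \in N(D_i) \subseteq C(r(D_i))$, so picking any $u \in V(T_i)$ with $x \in \chi_i(u)$ places both endpoints in $\chi(u) = \chi_i(u) \cup C(r(D_i))$.

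The heart of the argument is axiom (iii). First I would record the elementary but crucial observation that for every unbalanced $w \in V(G)$, every vertex of $C(w) \setminus \{w\}$ is adjacent to $w$: indeed $C(w) \setminus \{w\} = N(B(w))$, and since $B(w)$ is a component of $G \setminus N[w]$, any vertex with a neighbor in $B(w)$ lies in $N(w)$. In particular, if $x,w \in \beta(S)$ and $x \in C(w)$ then either $x=w$ or $xw \in E(\beta(S))$.

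Now fix $x \in V(G)$ and set $F(x)=\{u \in V(T) : x \in \chi(u)\}$. If $x$ belongs to some component $D_i$, then $x \notin \beta(S)$, and since $C(w) \subseteq \beta(S)$ for every $w \in \Core(S)$ by Theorem~\ref{centralbag}\eqref{central-1}, one checks that $F(x) = \chi_i^{-1}(x)$, which is a subtree of $T_i$ by assumption. If instead $x \in \beta(S)$, then for each $i$ the intersection $F(x) \cap V(T_i)$ is either empty or all of $V(T_i)$, according to whether $x \in C(r(D_i))$; in the latter case the anchor vertex $v \in V(T_\beta)$ used to attach $T_i$ satisfies $r(D_i) \in \chi_\beta(v)$, whence $C(r(D_i)) \subseteq \chi(v)$ and $v$ lies in $F(x) \cap V(T_\beta)$, bridging $V(T_i)$ to $V(T_\beta)$. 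It therefore suffices to show that
\[
F(x) \cap V(T_\beta) \;=\; \chi_\beta^{-1}(x) \;\cup\; \bigcup_{w \in \Core(S),\, x \in C(w)} \chi_\beta^{-1}(w)
\]
is connected in $T_\beta$. Each summand is a subtree of $T_\beta$, and for every $w$ appearing in the union the key observation of the previous paragraph yields either $x=w$ or $xw \in E(\beta(S))$; in either case $\chi_\beta^{-1}(x) \cap \chi_\beta^{-1}(w) \neq \emptyset$. Thus every piece of the union meets the central piece $\chi_\beta^{-1}(x)$, so the union is a subtree of $T_\beta$, completing the verification.

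The main (and really only) obstacle is noticing that vertices in $C(w)$ are always adjacent to $w$; this is what converts the potentially non-adjacent pairs $xw$ arising in the last display into actual edges of $\beta(S)$, and hence pins all of the constituent subtrees together through $\chi_\beta^{-1}(x)$. Once this observation is on the table, the remaining work is routine bookkeeping with the three tree decomposition axioms.
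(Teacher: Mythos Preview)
Your proposal is correct and follows essentially the same route as the paper's proof: verify the three axioms, handle the edge axiom via Theorem~\ref{centralbag}\eqref{central-3}, and for connectivity split into $x \in D_i$ versus $x \in \beta(S)$, in the latter case writing $F(x)\cap V(T_\beta)$ as $\chi_\beta^{-1}(x)$ together with the subtrees $\chi_\beta^{-1}(w)$ for $w\in\Core(S)$ with $x\in C(w)$, each of which meets $\chi_\beta^{-1}(x)$. The one place you are more explicit than the paper is your observation that $C(w)\setminus\{w\}\subseteq N(w)$; the paper obtains the needed adjacency $xw\in E(\beta(S))$ by citing Theorem~\ref{centralbag}\eqref{central-1} (to place both endpoints in $\beta(S)$) and leaving the adjacency itself implicit, so your version is arguably cleaner at that step.
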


   \begin{proof}
     Since $T$ is obtained by adding a single edge from $T_{\beta}$ to each of
     the trees $T_1, \dots, T_m$, it follows that $T$ is a tree.
     Clearly, every vertex of $G$ is in $\chi(v)$ for some $v \in V(T)$.
     Next we check that for every edge $xy$ of $G$ there exists $v \in V(T)$
     such that $x,y \in \chi(T)$. This is clear if  $x,y \in \beta(S)$ or if
     $x,y \in D_i$ for some $i$; thus we may assume that $x \in \beta(S)$ and
     $y \in D_1$, say. Then by Theorem~\ref{centralbag}\eqref{central-3}, $x \in C(r(D_1))$, and
     therefore $x \in \chi(v)$ for every $v \in V(T_i)$. Let $v \in V(T_i)$ such that $y \in \chi_i(v) \subseteq \chi(v)$; then $x,y \in \chi(v)$ as required.

 Let $x \in V(G)$. Define
     $F_{\beta}(x)=\{v \in V(T_{\beta}) \text{ such that }x \in \chi(v)\}$,
     and for $i \in \{1, \dots, m\}$ define $F_{i}(x)=\{v \in V(T_i) \text{ such that }x \in \chi(v)\}$. Let $F(x)=F_{\beta}(x) \cup \bigcup_{i=1}^m(F_i(x))$. We  need to show that for every $x \in X$ we have that
     $T[F(x)]$ is connected.
     If $x \not\in \beta(S)$, then $F_{\beta}(x)=\emptyset$, and there exists a unique $i$ such that $x \in D_i$; therefore $F(x)=F_i(x)$, and $T[F(x)]$ is connected because $T_i[F_i(x)]$ is connected.
     Thus we may assume that $x \in \beta(S)$. Let $I \subseteq \{1, \dots, m\}$
     be the set of all $i$ such that $x \in C(r(D_i))$. It follows that $F_i(x) = \emptyset$ for all $i \in \{1, \dots, m\} \setminus I$. 

     First we show that for every $i \in I$,  there is an edge between
     $T_i[F_i(x)] = T_i$ and     $T_{\beta}[F_{\beta}(x)]$. 
     Let $i \in I$;
     and let $v \in T_{\beta}$ be such that $v$ is adjacent to a vertex of
     $T_i$. Then $r(D_i) \in \chi_\beta(v)$, so $C(r(D_i)) \subseteq \chi(v)$, and therefore, $x \in \chi(v)$. Thus there is an edge between $T_i[F_i(x)]$ and
     $T_{\beta}[F_{\beta}(x)]$, as required.

     Now to show that $T[F(x)]$ is connected, it is enough to prove that
     $T_{\beta}[F_{\beta}(x)]$ is connected. Let
       $J(x)=\{v \in \Core(S)\text { such that } x \in C(v)\}$. Observe that
       $$F_{\beta}(x)=\chi_{\beta}^{-1}(x) \cup \bigcup_{v \in J(x)} \chi_{\beta}^{-1}(v).$$ By Theorem~\ref{centralbag}\eqref{central-1}, $xv \in E(\beta(S))$ for every $v \in J(x) \setminus \{x\}$, so it follows
       that $\chi_{\beta}^{-1}(x) \cap \chi_{\beta}^{-1}(v) \neq \emptyset$ 
       for every $v \in J(x)$. Since
       $T_{\beta}[\chi_{\beta}^{-1}(u)]$ is a tree for every $u \in V(G)$,
       it follows that $T_{\beta}[F_{\beta}(x)]$ is connected, as required. This
       proves Theorem~\ref{extendtree}.
      
     \end{proof}

   \section{A useful vertex partition} \label{sec:collections}
   For the remainder of the paper, all logarithms are taken in base 2.  The goal of this section is to prove the following.
\begin{theorem} \label{logncollections}
  Let $t \in \mathbb{N}$, and let $G$ be (theta, $K_t$)-free with $|V(G)|=n$.
  Let $\delta_t$ be as in  Theorem~\ref{degenerate}. Then, there is a partition
  $(S_1, \dots, S_k)$   of $V(G)$ with the following properties:
  \begin{enumerate}
  \item $k \leq \delta_t \log n$.
  \item $S_i$ is a stable set for every $i \in \{1, \dots, k\}$.
  \item For every $i \in \{1, \ldots, k\}$ and $v \in S_i$ we have
    $\deg_{G \setminus \bigcup_{j <i}S_j}(v)  \leq 4 \delta_t$. \label{hubsequence-3}
  \end{enumerate}
  \end{theorem}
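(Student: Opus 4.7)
The plan is to build the partition greedily, peeling off at each step a stable set of low-degree vertices large enough to shrink the remaining graph by a constant factor. The only tool needed is Theorem~\ref{degenerate}: since degeneracy is inherited by induced subgraphs, every induced subgraph $G'$ of $G$ is itself $\delta_t$-degenerate.

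First I would establish the following extraction lemma: every nonempty induced subgraph $G'$ of $G$ on $m$ vertices contains a stable set $S \subseteq V(G')$ such that $\deg_{G'}(v) \leq 4\delta_t$ for every $v \in S$, and $|S| \geq m/(2\delta_t)$. The argument combines two easy consequences of $\delta_t$-degeneracy. On one hand, $|E(G')| \leq \delta_t m$, so $\sum_{v \in V(G')} \deg_{G'}(v) \leq 2\delta_t m$; hence by averaging (Markov), the set $L = \{v \in V(G') : \deg_{G'}(v) \leq 4\delta_t\}$ has size at least $m/2$. On the other hand, $G'[L]$ is $\delta_t$-degenerate and therefore $\delta_t$-colorable via the standard greedy argument along a degeneracy ordering, so it contains a stable set of size at least $|L|/\delta_t \geq m/(2\delta_t)$. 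Any such stable set $S$ is automatically stable in $G'$ and consists of low-degree vertices of $G'$.

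Second, I would iterate: set $G_1 = G$, and for $i \geq 1$ apply the lemma to $G_i$ to get $S_i$, then let $G_{i+1} = G_i \setminus S_i$, stopping at step $k$ when $G_{k+1}$ is empty. By construction each $S_i$ is a stable set, and by the lemma every $v \in S_i$ satisfies
\[
   \deg_{G \setminus \bigcup_{j < i} S_j}(v) \;=\; \deg_{G_i}(v) \;\leq\; 4\delta_t,
\]
verifying condition \eqref{hubsequence-3}. The union $S_1 \cup \cdots \cup S_k$ exhausts $V(G)$ since each step removes at least one vertex and we continue until the remaining graph is empty, so we obtain a partition of $V(G)$.

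Finally, the logarithmic bound on $k$: since $|V(G_{i+1})| \leq |V(G_i)|\bigl(1 - \tfrac{1}{2\delta_t}\bigr)$, induction gives $|V(G_i)| \leq n\bigl(1 - \tfrac{1}{2\delta_t}\bigr)^{i-1}$, and using $-\ln(1-x) \geq x$ we obtain $k \leq 2\delta_t \ln n + 1 = O(\delta_t \log n)$. The only genuine bookkeeping issue — the ``main obstacle'' if there is one — is to match the exact constant $\delta_t \log n$ stated in the theorem rather than the $2\delta_t \log n$ produced by the above calculation; this is resolved by absorbing the constant factor into $\delta_t$ (whose value depends only on $t$ and is supplied by Theorem~\ref{degenerate}), or by sharpening the Markov step using the exact bound $|E(G_i)| \leq (\delta_t-1)|V(G_i)|$ that degeneracy gives. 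No structural property of $\mathcal{C}_t$ beyond (theta, $K_t$)-freeness via Theorem~\ref{degenerate} is needed.
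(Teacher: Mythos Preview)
Your proof uses the same two ingredients as the paper---the Markov bound to locate many low-degree vertices, and $\delta_t$-degeneracy to extract stable sets---but interleaves them, whereas the paper separates them to hit the stated constant $\delta_t\log n$ exactly. The paper first iterates \emph{only} the Markov step: at each stage it removes \emph{all} vertices of degree at most $4\delta_t$ (at least half of what remains), producing a partition $T_1,\dots,T_m$ with $m\le\log_2 n$; only afterwards does it $\delta_t$-color each $T_i$ into stable sets $S_1^i,\dots,S_{\delta_t}^i$, yielding $\delta_t m\le\delta_t\log n$ parts in total. Your version, by peeling off a single stable set per round, removes only a $1/(2\delta_t)$ fraction each time and gives $k\approx 2\delta_t\ln n\approx 1.39\,\delta_t\log_2 n$. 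Your proposed fixes do not close this gap: $\delta_t$ is the specific constant supplied by Theorem~\ref{degenerate} and is not free to absorb extra factors, and sharpening the edge count to $(\delta_t-1)m$ does not help enough. So your argument proves the right result up to a constant (which is all that is needed downstream), but to prove the theorem as stated you should decouple the halving from the coloring as the paper does.
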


\begin{proof}
  We start with the following.
  
  \sta{\label{nover2} At least $\frac{n}{2}$ vertices of $G$ have degree at most
        $4\delta_t$.}
        
  Since by Theorem~\ref{degenerate} $G$ is $\delta_t$-degenerate, it follows that
  $|E(G)| \leq \delta_t n$.
  Let $X$ be the set of vertices of $G$ with degree at least $4\delta_t$.
  Then the number of edges incident with vertices of $X$ is at least
  $$\frac{1}{2} |X| 4 \delta_t \leq |E(G)| \leq \delta_t n$$
and therefore $|X| \leq \frac{n}{2}$. This proves \eqref{nover2}.
\\
\\
\sta{\label{lognsets}
   There is a partition $T_1, \ldots, T_m$ of $V(G)$
        with the following properties:
\begin{enumerate}
  \item $m \leq \log n$.
    \item For every $i \in \{1, \ldots, m\}$ and $v \in T_i$ we have
    $\deg_{G \setminus \bigcup_{j <i}T_j}(v)  \leq 4 \delta_t$.
  \end{enumerate}}
  The proof is by induction on $n$. By \eqref{nover2} there is a set $T_1 \subseteq V(G)$ with $|T_1|=\frac{n}{2}$ such that every vertex of $T_1$ has degree
  at most $4 \delta_t$ in $G$. Inductively, $V(G \setminus T_1)$ has a
  partition $T_2, \ldots, T_m$ satisfying the requirement of \eqref{lognsets}
  with $m-1 \leq \log {\frac{n}{2}}=\log n -1$. Now $T_1, \ldots, T_m$
  is a required partition of $V(G)$ and $m \leq \log n$. This proves \eqref{lognsets}.\vsp
  
  Since by Theorem~\ref{degenerate} $G$ is $\delta_t$-degenerate, it follows that
  for every $i \in \{1, \dots, m\}$ the graph $G[T_i]$ can be
  $\delta_t$-colored; let $S_1^i, \dots, S_{\delta_t}^i$ be such a coloring of
  $T_i$. Now
  $$S_1^1, \dots, S_{\delta_t}^1, \dots, S_1^m, \dots, S_{\delta_t}^m$$
  is a required partition of $V(G)$.
  This proves Theorem~\ref{logncollections}.
  \end{proof}

\section{Putting everything together} \label{sec:proof}

Let $G$ be a graph. A {\em hub-partition} of $G$ is a partition
$S_1, \ldots, S_k$ of $G[\Hub(G)]$ as in
Theorem~\ref{logncollections}; we call $k$ the {\em order} of the partition.
We call the {\em hub-dimension} of $G$ (denoting it by
$\hdim(G)$) the smallest $k$ such that $G$ as a hub partition of order $k$.

We can now state a strengthening of Theorem~\ref{mainthm} that we will prove by induction on $|V(G)|$ and $\hdim(G)$.

\begin{theorem}
  \label{diminduction}
  Let $G \in \mathcal{C}_t$ be a graph with $|V(G)|=n$. Then
  $\tw(G) \leq R(t, 4) + R(t,4)(4 \delta_t+R(t, 3))(\log n+\hdim(G))$.
\end{theorem}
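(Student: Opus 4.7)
The plan is to induct lexicographically on $(|V(G)|, \hdim(G))$, writing $c_t := R(t,4)(4\delta_t + R(t,3))$ for brevity. The base cases are $|V(G)|=1$ (trivial) and $\hdim(G)=0$; in the latter $G$ has no hubs, so Theorem~\ref{structured} produces a tree decomposition in which every bag is a PMC, and Theorem~\ref{smallPMC} bounds each PMC by $R(t,4)$ (since $\Hub(G)=\emptyset$), giving $\tw(G)\leq R(t,4)-1$. For the inductive step with $n\geq 2$ and $p:=\hdim(G)\geq 1$, I would first apply Theorem~\ref{nocliquecutset} to reduce to a clique-cutset-free induced subgraph (which only decreases $|V|$ and does not increase the parameters used for induction), and then use Theorem~\ref{contain_cube} to assume $G$ is cube-free (a cube partition forces $|V(G)|$ to be bounded by a function of $t$, so $\tw(G)$ is absorbed into the $R(t,4)$ term). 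Fix a hub-partition $(S_1,\ldots,S_p)$ of $G$ of order $p$, and split according to whether some $v\in S_1$ is balanced.

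In the balanced case, let $D_1,\ldots,D_m$ be the components of $G\setminus N[v]$ (each of size at most $n/2$) and form the auxiliary graph $H$ of Section~\ref{sec:neighborhood_construction}. Theorem~\ref{HinCt} shows $H$ is wheel-free, so the $\hdim=0$ base case yields $\tw(H)\leq R(t,4)-1$; induction on $|V|$ gives $\tw(D_i)\leq R(t,4)+c_t(\log(n/2)+\hdim(D_i))$. I would glue these through Theorem~\ref{extendnbrhood}, bounding bag growth by $|N_{\Hub(G)}(v)|\leq \deg_G(v)\leq 4\delta_t$ (from $v\in S_1$), by $|N_H(d_i)|\leq R(t,3)$ from Corollary~\ref{smallattachments}, and by the fact that at most $R(t,4)$ of the $d_i$ lie in any PMC of $H$ (Theorem~\ref{smallPMC} applied to the stable set $\{d_1,\ldots,d_m\}$, which is disjoint from $\Hub(H)=\emptyset$).

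In the unbalanced case, I would form the central bag $\beta(S_1)$ from Section~\ref{sec:centralbag}. By Theorem~\ref{centralbag}\eqref{central-5}, $\Hub(\beta(S_1))\cap S_1=\emptyset$, and I would argue that the candidate partition $T_j:=S_{j+1}\cap \Hub(\beta(S_1))$ witnesses $\hdim(\beta(S_1))\leq p-1$; induction on $\hdim$ then gives $\tw(\beta(S_1))\leq R(t,4)+c_t(\log n+p-1)$. By Theorem~\ref{centralbag}\eqref{central-3}, each component $D$ of $G\setminus \beta(S_1)$ sits inside some $A(v)$ with $v\in\Core(S_1)$, and $|A(v)|<n/2$ since $v$ is unbalanced, so induction on $|V|$ gives $\tw(D)\leq R(t,4)+c_t(\log(n/2)+\hdim(D))$. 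Gluing via Theorem~\ref{extendtree} and using $|C(v)|\leq R(t,3)+4\delta_t$ for $v\in\Core(S_1)$ (Theorem~\ref{centralbag}\eqref{central-2}) together with $|\Core(S_1)\cap X|\leq R(t,4)$ for any PMC $X$ of $\beta(S_1)$ (Theorem~\ref{smallPMC}, since $\Core(S_1)\subseteq S_1\cap\beta(S_1)$ is stable and disjoint from $\Hub(\beta(S_1))$), the bag growth on the $\beta$-side is at most $c_t$, matching the unit decrement in $\hdim$ exactly; the $D$-side yields the target bound provided $\hdim(D)\leq p$, with the unit reduction in $\log$ paying for the gap between $\hdim(D)$ and $\hdim(G)$.

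The hardest step will be verifying the hub-dimension monotonicity on the recursive subgraphs, namely $\hdim(\beta(S_1))\leq p-1$ in the unbalanced case and $\hdim(D)\leq p$ for each component $D$. Restricting the hub-partition $(S_j)$ of $G$ to a recursive subgraph does not immediately satisfy the $4\delta_t$-degree condition of Theorem~\ref{logncollections}, because a vertex $w$ classified as an early-class hub of $G$ can fail to be a hub of the subgraph, in which case $w$ is not removed when the restricted early classes are deleted and can in principle inflate the degree of a later-class vertex beyond $4\delta_t$. Controlling these residual contributions — using that $\Core(S_1)=S_1\cap\beta(S_1)$, the non-crossing structure of canonical star separations from Lemma~\ref{looselylaminar}, and the wheel-structure forced at surviving hubs — is the technical crux. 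Once these monotonicity bounds are established, the bag-size arithmetic above closes the induction cleanly: the $c_t$ growth per recursive step exactly covers the unit step in either $\hdim$ (for the central-bag recursion) or in $\log n$ (for the component recursion).
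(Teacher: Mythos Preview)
Your approach is correct and essentially the same as the paper's; the paper merely organises the recursion as an explicit iteration $\beta_0=G,\ \beta_1=\beta(S_1'),\ \ldots,\ \beta_{final}$ followed by a backward induction on the index, whereas you recurse one central-bag step at a time, but the content is identical.

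Your flagged ``hardest step'' is in fact immediate, and your stated worry rests on a misreading of the definition. A hub-partition is a partition of $\Hub(G)$, and condition~(3) of Theorem~\ref{logncollections} is a degree condition in the graph $G[\Hub(G)]$, not in $G$. Hence if $w\in S_j$ ceases to be a hub when passing to an induced subgraph $G'$, then $w\notin\Hub(G')$, so $w$ is absent from $G'[\Hub(G')]$ altogether and cannot contribute to any degree count there. Concretely, for $G'=\beta(S_1)$: Theorem~\ref{centralbag}\eqref{central-5} gives $S_1\cap\Hub(\beta(S_1))=\emptyset$, and since $\Hub(\beta(S_1))\subseteq\Hub(G)$, the sets $T_j:=S_{j+1}\cap\Hub(\beta(S_1))$ cover $\Hub(\beta(S_1))$; for $v\in T_j$ one has
\[
N_{\beta(S_1)[\Hub(\beta(S_1))]\setminus\bigcup_{l<j}T_l}(v)\ \subseteq\ N_{G[\Hub(G)]}(v)\cap\bigcup_{l\geq j+1}S_l,
\]
and the right-hand side has size at most $4\delta_t$ by the original hub-partition. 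The same one-line argument gives $\hdim(D)\leq p$ for every component $D$ of $G\setminus\beta(S_1)$ (and for every $D_i$ in the balanced case). This is exactly how the paper obtains $\hdim(\beta_{final})\leq k-depth_{final}$ from \eqref{nofuturehubs}; no appeal to Lemma~\ref{looselylaminar} or to wheel structure is needed here.
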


By Theorem~\ref{logncollections}, $\hdim(G) \leq \delta_t \log n$ for every $G \in \mathcal{C}_t$, so Theorem~\ref{diminduction}
immediately implies Theorem~\ref{mainthm}. We now prove Theorem~\ref{diminduction}.

\begin{proof}
  
The proof is by induction on $\hdim(G)$, and for fixed
$\hdim(G)$ by induction on $|V(G)|$.
Let $G \in \mathcal{C}_t$.
By Theomrem~\ref{nocliquecutset} we may assume that $G$ has no clique cutset
(and thus in particular, $G$ is connected).
If $G$ contains the cube, then by Theorem~\ref{contain_cube}
$|V(G)|<9t$, so we may assume that $G$
does not contain a cube.
Let $S_1, \ldots, S_k$ be a hub-partition of $G$ with $k=\hdim(G)$.
Let $\beta_0=G$.
Having defined $\beta_i$, if
$\beta_{final}$ is not yet defined, proceed as follows.
If $\Hub(\beta_i)=\emptyset$, let $\beta_{final}=\beta_i$.
Now assume that $\Hub(\beta_i) \neq \emptyset$.
Let $S_{i+1}'=S_{i+1} \cap \Hub(\beta_i)$. (Recall that every vertex of
$S_{i+1}$ is a hub in $G$, whereas $S_{i+1}'$ is the set of all
the vertices of $S_{i+1}$ that remain hubs in $\beta_{i}$, and
so results of Section~\ref{sec:centralbag}  can be applied to $S_{i+1}'$ in
$\beta_i$.) If some vertex in
$S_{i+1}'$ is balanced (in $\beta_i$), let $\beta_{final}=\beta_i$.
In both cases define
$S_{final} = S_{i+1}'$, $depth_{final} = i$ and $\mathcal{P}_{final}=\bigcup_{j=1}^i S_i$.
Now we may assume that every vertex of $S_{i+1}'$ is
unbalanced (in $\beta_i$). %By Theorem~\ref{centralbag}\eqref{central-4}, $\beta_i$ is connected.
Applying the construction described
in Section~\ref{sec:centralbag} to $S_{i+1}'$ and $\beta_i$,
let $\beta_{i+1}=\beta(S'_{i+1})$. Notice that, since we are working in $\beta_i$, canonical star separations for $S_{i+1}'$ are defined in $\beta_i$, and the relation $\leq_A$ and the set $\Core(S'_{i+1})$ in $\beta_i$ rely on the canonical star separations defined in $\beta_i$. By repeated applications of Theorem~\ref{centralbag}\eqref{central-5}, we deduce:

  \sta{\label{nofuturehubs}
    We have $S_j \cap \Hub(\beta_i)=\emptyset$ for every $j \leq i$.
        In particular, $\mathcal{P}_{final} \cap \Hub(\beta_{final})=\emptyset$.}
        
   By \eqref{nofuturehubs} and assertion \eqref{hubsequence-3} of Theorem~\ref{logncollections}, we have the following useful fact.  
   
   \sta{\label{hubdegree}
   For all $1 \leq i < k$ and for all $v \in S_{i+1}$, we have $\deg_{\Hub(\beta_i)}(v) \leq 4\delta_t$. }
   
  Next we show:
  
  \sta{\label{twbetafinal} We have $\tw(\beta_{final}) \leq  R(t, 4) + R(t,4)(4 \delta_t+R(t, 3))(\hdim(G)-depth_{final}+\log (|\beta_{final}|)).$}
  
  Suppose first that $\Hub(\beta_{final}) \neq \emptyset$.
  Then, there is a balanced vertex $v \in S_{final}$. By \eqref{nofuturehubs}, it follows that $\Hub(\beta_{final}) \subseteq \Hub(G) \setminus \mathcal{P}_{final}$, and so
  $\hdim(\beta_{final}) \leq k-depth_{final}$.
  Let $D_1, \hdots, D_m$ be the components of $\beta_{final} \setminus N[v]$. Since $v$ is balanced, it holds that $|D_i| \leq \frac{|\beta_{final}|}{2}$ for every $1 \leq i \leq m$.
  Let $H$ be the graph obtained from $\beta_{final} \setminus \{v\}$ by contracting
  each $D_i$ to a vertex $d_i$. By Theorem~\ref{HinCt}, we have $H \in \mathcal{C}_t$
  and $\Hub(H) = \emptyset$. Therefore, $\hdim(H) = 0$, and it follows inductively that  
  %$\{d_1, \dots, d_m\} \cap \Hub(H) = \emptyset$.
  %Therefore $v$ is complete to $\Hub(H)$, and consequently
  %$\hdim(H) \leq \hdim(\beta_{final})-1 \leq depth_{final}-1$.
  $$\tw(H) \leq R(t, 4) + R(t,4)(4 \delta_t+R(t, 3))\log (|V(H)|).$$
 % Therefore
 % $$\tw(H) \leq 2R(t, 3)^3-1+ R(t,4)(4 \delta_t+R(t, 3))(depth_{final}) +\log (|\beta_{final}|)-1).$$

By Theorem~\ref{structured}, $H$ has a tree decomposition
$(T_0,\chi_0)$ of width $\tw(H)$,  where $\chi_0(v)$ is a PMC of $H$ for all $v \in V(T_0)$.
  Let $T$ be a tree decomposition of $\beta_{final}$ as in
  Theorem~\ref{extendnbrhood} (with $G=\beta_{final}$).
  We will show that $(T,\chi)$ has the required width.
%Let $D_1, \dots, D_m$ be the components of $\beta_{final} \setminus N[v]$.
  
  Let $u \in V(T)$; we will show an upper bound on $|\chi(u)|$.
  Suppose first that $u \in T_0$. By the construction in Section~\ref{sec:neighborhood_construction}, we have $\chi(u) = (\chi_0(u) \setminus \{d_1, \hdots, d_m\}) \cup N_{\Hub(\beta_{final})}(v) \cup \{v\}\cup \bigcup_{d_i \in \chi_0(u)} N_H(d_i)$. 
  Since  $\{d_1, \dots, d_m\} \cap \Hub(H) = \emptyset$, it follows
  from Theorem~\ref{smallPMC} that
  $|\chi_0(u) \cap \{d_1, \dots, d_m\}| \leq R(t,4)$.
  Next, by Corollary~\ref{smallattachments}, for every
  $i \in \{1, \dots, m\}$ we have that
  $|N_H(d_i)| \leq  R(t,3)$.
  Finally, by \eqref{hubdegree}, it follows that $\deg_{\Hub(\beta_{final})}(v) \leq 4 \delta_t$,
  so
  $$|\chi(u)| \leq |\chi_0(u)|+R(t,4)(R(t, 3)+4\delta_t).$$
  %the required upper bound follows.
  By the upper bound on $\tw(H)$, it holds that $|\chi_0(u)| \leq R(t, 4) + R(t, 4)(4\delta_t + R(t, 3))\log(|V(H)|)$, and so
  $$|\chi(u)| \leq R(t, 4) + R(t, 4)(4\delta_t + R(t, 3))(\log(|V(H)|) + 1).$$

  Since $|V(H)| \leq |\beta_{final}|$ and $\hdim(G) - depth_{final} \geq \hdim(\beta_{final}) \geq 1$, the required upper bound holds. Thus, we may assume that $u \in V(T_i)$. By the construction in Section~\ref{sec:neighborhood_construction}, we have $|\chi(u)| = |\chi_i(u)| + |N(d_i)| + |N_{\Hub(\beta_{final})}(v)|$. 
  By Corollary~\ref{smallattachments}, it holds that $|N(d_i)| \leq R(t, 3)$, and by \eqref{hubdegree}, it holds that $N_{\Hub(\beta_{final})}(v) \leq 4\delta_t$. Therefore,
  $|\chi(u)| \leq |\chi_i(u)|+4\delta_t+R(t, 3)$.
  Since $|D_i| \leq \frac{|\beta_{final}|}{2}$, it follows inductively that
    $$\tw(D_i) \leq R(t, 4)+ R(t,4)(4 \delta_t+R(t, 3))(\log (|D_i|)+\hdim(\beta_{final})) \leq $$
$$   R(t, 4) + R(t,4)(4 \delta_t+R(t, 3))(\log (|\beta_{final}|)-1+\hdim(\beta_{final})).$$
  Since $R(t,4)(4 \delta_t+R(t, 3)) \geq 4\delta_t+R(t, 3)$, the required upper
  bound follows.

Now we may assume that $\Hub(\beta_{final})=\emptyset$. By Theorem~\ref{smallPMC}, it follows that every PMC of $\beta_{final}$ has size at most $R(t, 4)$. Now, by Theorem~\ref{structured}, $\tw(\beta_{final}) \leq R(t, 4)$, as required. This proves \eqref{twbetafinal}. 
%By Theorem~\ref{smallminimal}, every minimal separator of $\beta_{final}$ has
%size at most $R(t, 3)$. Now by Theorem 2.5 of \cite{septotree},
%we deduce that
%$\tw(\beta_{final}) \leq 2 R(t, 3)^3-1$, as required.
%This proves \eqref{twbetafinal}.
\\
\\
Now we prove by  induction on $s$ that for every $s \leq depth_{final}$, it holds that
$$\tw(\beta_s) \leq  R(t, 4) +R(t,4)(4 \delta_t+R(t, 3))(k-s+ \log (|\beta_s|)).$$
The assertion follows from \eqref{twbetafinal} for $s=depth_{final}$.
Now assume that we know the assertion for $\beta_s$; we prove it for
$\beta_{s-1}$.

  By Theorem~\ref{structured}, $\beta_s$ has a tree decomposition
  $(T_0,\chi_0)$ of width $\tw(\beta_s)$, where $\chi_0(v)$ is a PMC of $\beta_s$
  for every $v \in V(T_0)$.
  Let $T$ be a tree decomposition of $\beta_{s-1}$ as in
  Theorem~\ref{extendtree} (with $G=\beta_{s-1}$ and $\beta(S) =\beta_{s}$,
  where we use $(T_0,\chi_0)$ as $(T_{\beta}, \chi_{\beta})$).
  We will show that $(T,\chi)$ has the required width.
Let $D_1, \dots, D_m$ be the components of $\beta_{s-1} \setminus \beta_s$.
  
  Let $u \in V(T)$; we will show an upper bound on $|\chi(u)|$.
  Let $j \in \{0, \dots, m\}$ be such that $u \in T_j$. We show:
  
  \sta{\label{chij(u)} We have
  $$|\chi_j(u)| \leq R(t, 4) + R(t, 4)(4\delta_t + R(t, 3))(k - s + \log(|\beta_{s-1}|)).$$}
  
  If $j=0$, \eqref{chij(u)}  is true inductively since $|\beta_s| \leq |\beta_{s-1}|$.  Thus we may assume that $j \geq 1$.
  By \eqref{nofuturehubs}, $\hdim(\beta_{s-1}) \leq k-s+1$.
 By Theorem~\ref{centralbag}\eqref{central-3}, it holds that $|D_j| \leq \frac{|\beta_{s-1}|}{2}$, so  we deduce inductively that 
  $$|\chi_j(u)| \leq \tw(D_j) \leq R(t, 4) + R(t,4)(4 \delta_t+R(t, 3))(\log (|D_i|)+\hdim(\beta_{s-1})) \leq $$
$$   R(t, 4)+ R(t,4)(4 \delta_t+R(t, 3))(\log (|\beta_{s-1}|)-1+(k-s+1)).$$
        This proves \eqref{chij(u)}. \\

  Next, by \eqref{hubdegree} and by Theorem~\ref{centralbag}\eqref{central-2}, we have the following: 
  
  \sta{\label{C(v)_bounded}
      For every
  $v \in \Core(S_s)$ we have that $\deg_{\beta_s}(v) < 4\delta_t+R(t, 3)$.}

  Now we show:

\sta{\label{growbag}
       $|\chi(u)| \leq |\chi_j(u)|+R(t,4)(4\delta_t+R(t, 3))$.}

Suppose first that  $j>0$. Then, by the construction in Section~\ref{sec:centralbag}, we have $\chi(u) = \chi_j(u) \cup C(r(D_j))$. 
  Now by \eqref{C(v)_bounded}, it holds that
  $|\chi(u)| \leq |\chi_j(u)|+4\delta_t+R(t, 3)$.
Since $R(t,4)(4 \delta_t+R(t, 3)) \geq 4\delta_t+R(t, 3)$, the required upper
  bound follows.
  Thus we may assume that $j=0$.
  By the construction in Section~\ref{sec:centralbag}, we have $\chi(u) = \chi_0(u) \cup \bigcup_{v \in \Core(S_s) \cap \chi_0(u)} C(v)$.
  By \eqref{nofuturehubs}, it holds that
  $S_{s} \cap \Hub(\beta_s)=\emptyset$, so by Theorem~\ref{smallPMC} it follows that
  $\Core(S_s) \cap \chi_0(u) \leq R(t,4)$.
  It follows from \eqref{C(v)_bounded} that
  $|\chi(u)| \leq |\chi_0(u)|+R(t,4)(4\delta_t+R(t, 3))$.
This proves \eqref{growbag}.
\\
\\
Now the result follows immediately from \eqref{chij(u)} and \eqref{growbag}.
  This completes the proof of Theorem~\ref{diminduction}.

\end{proof}

\section{Algorithmic consequences}\label{algsec}

Finally, let us shed some light on the algorithmic significance of Theorem~\ref{mainthm}. It is well-known that a wide range of \textsf{NP}-hard problems are polynomial-time solvable on graphs of bounded treewidth, provided that a tree decomposition of bounded width is given. In particular, according to a celebrated theorem of Courcelle \cite{courcelle1990monadic}, for every graph property $\mathtt{P}$ which is definable in the monadic second-order logic, there exists a computable function $f$ such that given a graph $G$ and a tree decomposition of $G$ of width at most $k$, one can check in time $\mathcal{O}(f(k)|V(G)|)$ whether $G$ satisfies $\mathtt{P}$. The function $f$ can (and in fact `should', in some sense) be quite huge for some properties. But for a broad category of `local' problems, the standard dynamic programming on a tree decomposition of bounded width \cite{Bodlaender1988DynamicTreewidth} yields $f(k)=2^{\mathcal{O}(k)}$. Roughly speaking, \textit{local} problems, such as \textsc{Stable Set}, \textsc{Vertex Cover}, \textsc{Dominating Set} and \textsc{$k$-Coloring} (with fixed $k$), are those for which, while applying the dynamic programming to a tree decomposition of bounded width, for each bag it suffices to keep track of those subsets of the bag that belong to a partial solution (again, see \cite{Bodlaender1988DynamicTreewidth} for more details).  See also \cite{BODLAENDER201586, 6108160} for more information about problems attaining $f(k)=2^{\mathcal{O}(k)}$.

Note that while Courcelle's theorem guarantees a polynomial-time algorithm in graph classes of bounded treewidth, aligning with Theorem~\ref{mainthm}, the major advantage of $f$ being singly exponential (with linear exponent) in $k$ is that the corresponding problem becomes polynomial-time solvable in much richer classes of graphs, namely those with logarithmic treewidth. 

This leaves as the only concern to compute efficiently a tree decomposition of logarithmic width. The following $4$-approximation for computing a tree decomposition of minimum width due to Robertson and Seymour \cite{RS-GMXIII} is strong enough for our purposes.

\begin{theorem}[\cite{RS-GMXIII}, see also \cite{Marcinbook}]
There exists an algorithm which runs in time $\mathcal{O}(27^k\cdot k^2\cdot n^2)$ and given a graph $G$ and an integer $k$, the algorithm either correctly outputs $\tw(G)>k$ or computes a tree decomposition of $G$ of width at most $4k$.
\end{theorem}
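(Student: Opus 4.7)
The plan is to design a recursive top-down algorithm that builds a tree decomposition of $G$ bag by bag, guided by a balanced-separator characterization of treewidth.  A recursive call is specified by a pair $(H,W)$ where $H$ is an induced subgraph of $G$ and $W\subseteq V(H)$ is an ``interface'' with $|W|\le 3k$; the contract is that $W$ has already been inserted into the parent bag, and the call either returns a tree decomposition of $H$ of width at most $4k$ whose root bag contains $W$, or declares $\tw(G)>k$.  The top-level call is made with $H=G$ and $W=\emptyset$, and pieces are glued along the shared interface $W$.

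The key structural ingredient is a balanced-separator lemma: if $\tw(H)\le k$ and $W\subseteq V(H)$ with $|W|\le 3k$, then there exists $S\subseteq V(H)$ with $W\subseteq S$ and $|S|\le 4k+1$ such that each component $C$ of $H\setminus S$ satisfies $|N(C)\cap S|\le 3k$.  To prove it, take an optimal tree decomposition $(T,\chi)$ of $H$, weight each vertex of $V(H)\setminus W$ by $1$, and pick a centroid node $t\in V(T)$ so that each subtree of $T\setminus\{t\}$ accounts for at most $\tfrac12|V(H)\setminus W|$ weight; set $S:=W\cup\chi(t)$, which has size at most $3k+(k+1)$.  Each component of $H\setminus S$ sits entirely in one of the subtrees of $T\setminus\{t\}$ and, by the Helly-type Theorem~\ref{cliqueinbag}-style argument applied to $\chi$, has neighbourhood contained in $\chi(t)$ together with the bounded portion of $W$ on the same side, totalling at most $3k$.

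The recursive call implements the search for such an $S$ by enumerating all partitions of $W$ into three labelled parts $(W_L,W_R,W_0)$ — one part per ``side'' of the intended separator plus a part to be inserted into $S$ itself — giving $3^{|W|}\le 27^k$ options.  For each partition, run a max-flow computation in $H$ to test whether there is $S^*\subseteq V(H)\setminus(W_L\cup W_R)$ of size at most $k+1$ that separates $W_L$ from $W_R$; if so, set the current bag to $W\cup W_0\cup S^*$ and recurse on each component $C$ of $H\setminus(W\cup W_0\cup S^*)$ with interface $N[C]\cap(W\cup W_0\cup S^*)$, which has size at most $3k$ by construction.  Augmenting-path max-flow capped at $k+1$ paths runs in $O(k\cdot n^2)$ time, so each call costs $O(27^k\cdot k\cdot n^2)$.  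With at most $O(n)$ recursive calls (each strictly reducing $|V(H)\setminus W|$) and the bag-size bound $|W\cup W_0\cup S^*|\le 4k+1$, we obtain a width-$4k$ decomposition in total time $O(27^k\cdot k^2\cdot n^2)$.

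Correctness holds because if $\tw(H)\le k$ then the balanced-separator lemma produces an $S$ whose intersection pattern with $W$ defines a three-partition $(W_L,W_R,W_0)$ for which the flow test must succeed; conversely, the $27^k$ partitions exhaust every possible way $W$ could be split by the root separator of an optimal decomposition, so if all tests fail we may safely report $\tw(G)>k$.  The main obstacle is calibrating the bookkeeping so that the interface invariant $|W|\le 3k$ survives every recursive descent: one must show that for each child $C$ the interface $N[C]\cap(W\cup W_0\cup S^*)$ is genuinely bounded by $3k$, which amounts to verifying that $|(W\cup W_0)\cap N[C]|+|S^*\cap N[C]|$ never exceeds $(3k-k-1)+(k+1)=3k$.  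This careful accounting is exactly where the factor $4$ in the approximation ratio is absorbed, and it is the delicate point of the proof.
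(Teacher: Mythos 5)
A preliminary remark: the paper does not prove this theorem at all — it is quoted from Robertson--Seymour \cite{RS-GMXIII} (see also \cite{Marcinbook}) — so your proposal can only be compared with the standard proof in those sources, whose overall architecture (recursive calls $(H,W)$ with an interface invariant $|W|\le 3k+\mathcal{O}(1)$, a balanced-separator lemma, and a $3^{|W|}$ enumeration resolved by max-flow) you have correctly reproduced.

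There is, however, a genuine gap, and it sits exactly where you say the proof is ``delicate.'' Your balanced-separator lemma is argued with the wrong weighting: you pick the centroid bag $\chi(t)$ so that each subtree of $T\setminus\{t\}$ carries at most half of $|V(H)\setminus W|$. That balances the vertices \emph{outside} $W$ (useful only for progress), and gives no control over how much of $W$ a single component of $H\setminus S$ can see; the phrase ``the bounded portion of $W$ on the same side'' has no justification. Concretely, let $H$ be a long path with all $3k$ vertices of $W$ pendant on its left half; then $\tw(H)=1$, the vertex-count centroid is a middle edge, and the left component of $H\setminus S$ is adjacent to all of $W$ plus a centroid vertex, so $|N(C)\cap S|=3k+1$, contradicting your claimed bound. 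The correct lemma puts the weights on $W$ itself: there is a bag $S_0$ with $|S_0|\le k+1$ such that every component of $H\setminus S_0$ contains at most $|W|/2$ vertices of $W$, and then $N(C)\cap W$ is confined to one such component, giving $|N(C)\cap(W\cup S_0)|\le |W|/2+k+1\le 2.5k+1$. The same omission breaks the algorithmic step: you enumerate all partitions $(W_L,W_R,W_0)$ and accept \emph{any} separator $S^*$ of size at most $k+1$ separating $W_L$ from $W_R$, with no balance condition such as $|W_L|,|W_R|\le \tfrac23|W|$. A successful flow test for a partition with $|W_L|$ close to $3k$ then produces a child whose interface $N(C)\cap(W\cup W_0\cup S^*)$ has size close to $4k$; the invariant $|W|\le 3k$ fails, and with it both the width bound and the $27^k$ factor in the running time (the child would enumerate $3^{|W|}>27^k$ partitions). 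Your own accounting identifies the needed inequality $|(W\cup W_0)\cap N[C]|\le 2k-1$, but nothing in your construction enforces it; to complete the proof you must reinstate the balance requirement both in the existence lemma (weights on $W$) and as an explicit restriction on the enumerated partitions, which is precisely how the cited proofs absorb the factor $4$ (and why they typically carry a small additive slack, e.g.\ width $4k+3$ or $4k+4$, rather than exactly $4k$).
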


From this, we immediately deduce:

\begin{theorem}\label{generalalg}
Let {\rm\texttt{P}} be a problem which admits an algorithm running in time $\mathcal{O}(2^{\mathcal{O}(k)}|V(G)|)$ on graphs $G$ with a given tree-decomposition of width at most $k$. Also, let $\mathcal{G}$ be a class of graphs for which there exists a constant $c=c(\mathcal{G})$ such that $\tw(G)\leq c\log(|V(G)|)$ for all $G\in \mathcal{G}$. Then {\rm\texttt{P}} is polynomial-time solvable in $\mathcal{G}$.
\end{theorem}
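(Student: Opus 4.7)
The plan is to combine the Robertson--Seymour $4$-approximation for treewidth with the hypothesized algorithm for \texttt{P}, exploiting the fact that the approximation's running time is exponential only in the target width $k$, which by assumption is logarithmic in $n=|V(G)|$.

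Given an input $G \in \mathcal{G}$ on $n$ vertices, first set $k = \lceil c \log n \rceil$, where $c=c(\mathcal{G})$ is the constant guaranteed by the hypothesis, so that $\tw(G) \leq k$. Run the Robertson--Seymour algorithm on $(G,k)$. Since $\tw(G) \leq k$, the algorithm cannot output ``$\tw(G) > k$''; it therefore returns a tree decomposition $(T,\chi)$ of $G$ of width at most $4k$. The running time of this step is
\[
\mathcal{O}(27^{k}\cdot k^{2}\cdot n^{2}) \;=\; \mathcal{O}\!\left(27^{c\log n}\cdot (\log n)^{2}\cdot n^{2}\right) \;=\; \mathcal{O}\!\left(n^{c\log 27}(\log n)^{2}n^{2}\right),
\]
which is polynomial in $n$.

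Next, feed $G$ together with the tree decomposition $(T,\chi)$ (of width at most $4k = \mathcal{O}(\log n)$) into the algorithm for \texttt{P} guaranteed by the hypothesis. Its running time on this input is
\[
\mathcal{O}\!\left(2^{\mathcal{O}(4k)}\cdot n\right) \;=\; \mathcal{O}\!\left(2^{\mathcal{O}(\log n)}\cdot n\right) \;=\; n^{\mathcal{O}(1)},
\]
again polynomial in $n$. The total running time is polynomial in $n$, so \texttt{P} is polynomial-time solvable on $\mathcal{G}$.

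There is no real obstacle here beyond carefully pasting the two ingredients together; the only point worth emphasizing is that the $4$-approximation suffices (rather than an exact algorithm), because the hypothesis on \texttt{P} tolerates a constant blow-up in the width inside a singly exponential function, which is absorbed into the ``$\mathcal{O}$'' in the exponent.
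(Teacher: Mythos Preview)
Your proposal is correct and matches the paper's approach exactly: the paper states the theorem immediately after the Robertson--Seymour $4$-approximation result and says only ``From this, we immediately deduce,'' so your write-up is in fact more detailed than what the paper provides.
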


In view of Theorems~\ref{mainthm} and \ref{generalalg}, we conclude the following.

\begin{theorem}\label{algfinal}
Let $k\geq 1$ be fixed and {\rm\texttt{P}} be a problem which admits an algorithm running in time $\mathcal{O}(2^{\mathcal{O}(t)}|V(G)|)$ on graphs $G$ with a given tree-decomposition of width at most $k$. Then {\rm\texttt{P}} is polynomial-time solvable in $\mathcal{C}_t$. In particular, \textsc{Stable Set}, \textsc{Vertex Cover}, \textsc{Dominating Set} and \textsc{$q$-Coloring} (with fixed $q$) are all polynomial-time solvable in $\mathcal{C}_t$.
\end{theorem}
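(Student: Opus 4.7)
The plan is to combine Theorem~\ref{mainthm} with Theorem~\ref{generalalg}, both of which are already established, so the argument is essentially a plug-and-play. First I would observe that Theorem~\ref{mainthm} provides exactly the hypothesis needed to invoke Theorem~\ref{generalalg}: for each fixed $t\ge 1$, the class $\mathcal{C}_t$ admits a constant $c_t$ such that $\tw(G)\le c_t\log(|V(G)|)$ for every $G\in\mathcal{C}_t$. Hence Theorem~\ref{generalalg} immediately yields that any problem \texttt{P} solvable in time $\mathcal{O}(2^{\mathcal{O}(k)}|V(G)|)$ given a width-$k$ tree decomposition is polynomial-time solvable on $\mathcal{C}_t$.

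For the convenience of the reader I would sketch how Theorem~\ref{generalalg} itself follows from the 4-approximation of Robertson--Seymour. Given $G\in\mathcal{G}$ with $n=|V(G)|$, set $k=\lceil c\log n\rceil$ where $c=c(\mathcal{G})$. Running the approximation algorithm with parameter $k$ either certifies $\tw(G)>k$ (which cannot occur by hypothesis) or returns a tree decomposition of width at most $4k=\mathcal{O}(\log n)$. The running time is $\mathcal{O}(27^{k}k^{2}n^{2})=\mathcal{O}(n^{c\log 27}\cdot(\log n)^{2}\cdot n^{2})$, which is polynomial in $n$. Feeding this tree decomposition into the assumed algorithm for \texttt{P} takes time $\mathcal{O}(2^{\mathcal{O}(\log n)}\cdot n)=n^{\mathcal{O}(1)}$, again polynomial. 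Composing the two steps proves Theorem~\ref{generalalg}, and then specializing $\mathcal{G}=\mathcal{C}_t$ with $c=c_t$ from Theorem~\ref{mainthm} yields the first assertion of Theorem~\ref{algfinal}.

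For the ``in particular'' clause, I would cite the standard dynamic programming on tree decompositions (as referenced via \cite{Bodlaender1988DynamicTreewidth} earlier in the section), which solves \textsc{Stable Set}, \textsc{Vertex Cover}, \textsc{Dominating Set} and \textsc{$q$-Coloring} (for fixed $q$) in time $2^{\mathcal{O}(k)}|V(G)|$ given a tree decomposition of width $k$. Each of these therefore satisfies the hypothesis on \texttt{P}, and the previous paragraph applies to give polynomial-time algorithms on $\mathcal{C}_t$.

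There is no real obstacle; the only point requiring care is a minor notational one, namely that the symbol $t$ in the exponent ``$2^{\mathcal{O}(t)}$'' of the running-time hypothesis should be read as the width parameter $k$ (otherwise the statement is vacuous once $t$ is fixed). Assuming that reading, the proof reduces to quoting Theorems~\ref{mainthm} and \ref{generalalg} and noting that each listed problem admits a singly-exponential dynamic program on tree decompositions.
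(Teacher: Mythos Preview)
Your proposal is correct and matches the paper's approach exactly: the paper simply states that Theorem~\ref{algfinal} follows ``in view of Theorems~\ref{mainthm} and \ref{generalalg},'' without further argument. Your write-up is in fact more detailed than the paper's (you spell out how Theorem~\ref{generalalg} uses the Robertson--Seymour approximation and why the listed problems have singly-exponential DP), and your observation about the $t$-versus-$k$ typo in the exponent is well taken.
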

There is still one important problem to look into, and that is \textsc{Coloring}. By Theorem~\ref{degenerate}, for every fixed $t$, all graphs in $\mathcal{C}_t$ have chromatic number at most $\delta_t$. Also, for each fixed $k$, by Theorem~\ref{algfinal}, $k$-\textsc{Coloring} is polynomial-time solvable in $\mathcal{C}_t$. Now by solving  $k$-\textsc{Coloring} for every $k \in \{1, \dots, \delta_t\}$, we obtain a polynomial-time algorithm for \textsc{Coloring} in $\mathcal{C}_t$.

\section{Acknowledgments}
We are grateful to Cemil Dibek and Kristina Vu\v{s}kovi\'c for their involvement in the early stages of this work. We also thank Ron Aharoni, Eli Berger and Shira Zerbib for helpful discussions of possible strengthenings of Theorem~\ref{logncollections}. Finally, we thank Nicolas Trotignon for suggesting adding
Section~\ref{algsec} to the paper.

%% AUTHOR: You can generate such a bibliography from a .bib file by 
%% running pdflatex/bibtex/pdflatex/pdflatex and then pasting the .bbl file
%% between \begin{thebibliography} and \end{bibliography}

%%% AUTHOR: Include a short description of each author following the
%%% structure below. Use the same short tags used previously.  
%%% Use \imageat{} and \imagedot{} instead of "@" and "." in
%%% email addresses-this replaces the symbols with graphics to avoid 
%%% e-mail address harvesting from the .pdf file
\begin{aicauthors}
\begin{authorinfo}[ta]
  Tara Abrishami\\
  Princeton University\\
  Princeton, New Jersey, USA\\
  %paulerdos\imageat{}renyiinstitute\imagedot{}hu \\
  %\url{https://www.cs.elte.hu/erdos}
\end{authorinfo}
\begin{authorinfo}[mc]
  Maria Chudnovsky\\
Princeton University \\
Princeton, New Jersey, USA \\
\end{authorinfo}
\begin{authorinfo}[sh]
Sepehr Hajebi \\
Department of Combinatorics and Optimization \\
University of Waterloo \\
Waterloo, Ontario, Canada\\
\end{authorinfo}

\begin{authorinfo}[sophie]
Sophie Spirkl \\
Department of Combinatorics and Optimization \\
University of Waterloo \\
Waterloo, Ontario, Canada\\
\end{authorinfo}
\end{aicauthors}

\end{document}